\documentclass{article} 
\usepackage{amssymb}
\usepackage[normalem]{ulem}
\usepackage{tikz}
\usepackage{mathtools}

\def\lc{\mbox{\rm lc}}
\newcommand{\ord}{\mathrm{ord}\,}
\newcommand{\Res}{\mathrm{Res}\,}
\newcommand{\cont}{\mathrm{cont}}
\newcommand{\bC}{\mathbf{C}}
\newcommand{\bR}{\mathbf{R}}
\newcommand{\bK}{\mathbf{K}}
\newcommand{\bN}{\mathbf{N}}

\newcommand{\bQ}{\mathbf{Q}}
\newcommand{\bL}{\mathbf{L}}
\newcommand{\bM}{\mathbf{M}}
\newcommand{\Zer}{\mathop\mathrm{Zer}}
\newcommand{\scalar}[2]{\langle #1,#2\rangle}

\newtheorem{Theorem}{Theorem}[section]
\newtheorem{Proposition}[Theorem]{Proposition}
\newtheorem{Corollary}[Theorem]{Corollary}
\newtheorem{Remark}[Theorem]{Remark}

\newtheorem{Example}[Theorem]{Example}
\newtheorem{Lemma}[Theorem]{Lemma}
\newtheorem{Definition}[Theorem]{Definition}

\newenvironment{proof}[1][Proof]{\textbf{#1.} }{\
\rule{0.5em}{0.5em}}

\newcommand{\Teissr}[4]{
   \setlength{\unitlength}{1ex}
   \begin{picture}(#3,3)(0,0.4)
      \put(0,1.15){\line(1,0){#3}}
      \put(0,0.85){\line(1,0){#3}}
      \put(#4,1.3){\makebox(0,0)[b]{$#1$}}
      \put(#4,0.7){\makebox(0,0)[t]{$#2$}}
   \end{picture}}


\begin{document}
\title{Higher order polars of quasi-ordinary singularities
\footnotetext{
      \begin{minipage}[t]{4in}{\small
       2000 {\it Mathematics Subject Classification:\/} Primary 32S05;
       Secondary 14H20.\\
       Key words and phrases: quasi-ordinary polynomial, higher order polar, factorization, P-contact, self-contact.\\
       The first-named author was partially supported by the Spanish Project
       MTM 2016-80659-P.}
       \end{minipage}}}
 \author{Evelia R.\ Garc\'{\i}a Barroso and Janusz Gwo\'zdziewicz}
\maketitle

\begin{abstract}
A quasi-ordinary polynomial is a monic polynomial with coefficients in the power series ring such that its discriminant equals a monomial up to unit. In this paper we study higher derivatives of quasi-ordinary poly\-nomials, also called higher order polars. We find factorizations of these polars. Our research in this paper goes in two directions. We generali\-ze the results of  Casas-Alvero  and our previous results on higher order polars in the plane to irreducible quasi-ordinary polynomials. We also generalize the factorization of the first polar of a  quasi-ordinary polynomial (not necessary irreducible) given by the first-named author and Gonz\'alez-P\'erez  to higher order polars.   
This is a new result even in the plane case. Our results remain true when we replace quasi-ordinary polynomials by quasi-ordinary power series.
\end{abstract}

\section{Introduction}
\label{intro}
In \cite{Merle} Merle gave a decomposition theorem of a generic polar curve of an irreducible plane curve singularity, according to its topological type. The factors of this decomposition are not necessary irreducible. Merle`s decomposition was generalized to  reduced plane curve germs by Kuo and Lu \cite{K-L}, Delgado de la Mata \cite{Delgado}, Eggers \cite{Eggers}, Garc\'{\i}a Barroso \cite{GB} among others. In \cite{GB-GP}, Garc\'{\i}a Barroso and Gonz\'alez P\'erez obtained decompositions of the polar hypersurfaces of quasi-ordinary singularities. On the other hand, Casas-Alvero in \cite{Casas} generalized the results of Merle to higher order polars of an irreducible plane curve. In \cite{Forum} we improved his results  giving a finer decomposition  in such a way that we are able to determine the topological type of some irreducible factors of the polar as well as their number.

\medskip

\noindent Our research in this paper goes in two directions. We generalize the results of  \cite{Casas}  and \cite{Forum} on higher order polars to irreducible quasi-ordinary singularities (see Theorem \ref{Merle} and Proposition \ref{ppppp}). We also generalize the factorization of the first polar of a  quasi-ordinary singularity (not necessary irreducible) from  \cite{GB-GP}  to higher order polars (see Theorem \ref{dec-red-qo}).   
This is a new result even in the plane case.

\medskip

\noindent Our approach is based on  Kuo-Lu trees, Eggers trees, Newton polytopes and resultants. As it was remarked in \cite{tesis} and \cite{GB-GP}, the irreducible factors of  the polar of a quasi-ordinary singularity are not necessary quasi-ordinary. For that reason, we mesure the relative position of these irreducible factors and those of the quasi-ordinary singularity  using a new notion called the {\em P-contact}, which plays in our situation the role of the {\em logarithmic distance} introduced by P\l oski in \cite{Ploski}. 

\medskip

\noindent The paper is organized as follows. 

\medskip

\noindent In Section \ref{section-Newton-polytopes} we recall the notion of  the Newton polytope of  a Weierstrass polynomial $f\in \bK[[\underline{x}]][y]$ and we use it together with the Rond-Schober irreducibility criterium \cite{R-S}, in order to give sufficient conditions for the reducibility of $f$. The most important result in this section is Corollary \ref{irred}, which allows us to characterize, in Theorem \ref{pack}, the irreducible factors of the higher order polars of the polynomial $f$.

\medskip

\noindent In Section \ref{section-Kuo-Lu-tree} we present the notion of the Kuo-Lu tree of a quasi-ordinary Weierstrass polynomial.  Then in Section \ref{section-Compatibility with pseudo-balls} we identify the bars of a Kuo-Lu tree with certain sets of fractional power series called {\em pseudo-balls} and we  introduce the notion of {\em compatibility} of a  Weierstrass polynomial with a pseudo-ball. Every quasi-ordinary Weierstrass polynomial is compatible with every pseudo-ball associated with its Kuo-Lu tree. Moreover if a Weierstrass  polynomial is compatible with a pseudo-ball then any factor of it is  compatible too (see Corollary \ref{CCKiel}). In Lemma \ref{derivatives} we prove that, under some conditions, the normalized higher derivatives inherit the compatibility property.   In Section \ref{section-Conjugate-pseudo-balls} we introduce, using Galois automorphisms,  an equivalence relation in the set of pseudo-balls, called {\em conjugacy},  and we explore the compatibility property for conjugate pseudo-balls. We generalize the Kuo-Lu Lemma \cite[Lemma 3.3]{K-L} to higher derivatives in Section \ref{section-Kuo-Lu Lemma}. In Section \ref{section-Newton-polytopes-resultants} we introduce our main tool, monomial substitutions, that allows us to reduce several questions to the case of two variables. In particular, if   $f$ and $g$ are power series  in $d+1$ variables such that after generic monomials substitutions we obtain power series $\bar f,\bar g$ in two variables with equal Newton polygons,  then the Newton polytopes of $f$ and $g$ are also equal (see Corollary \ref{R1}).
In Section \ref{section-Eggers-tree} we extend the notion of Eggers tree introduced in  \cite{Eggers}, to quasi-ordinary settings. Remark that the tree we use here  is not exactly the Eggers-Wall tree introduced in \cite{tesis} for the quasi-ordinary situation. The main
 result of Section \ref{section-Irreducible factors} is Theorem \ref{pack}, where we characterize the irreducible factors of higher derivatives of  quasi-ordinary Weierstrass polynomials.  Theorem \ref{pack} allows us to give factorizations of higher derivatives,  in terms of the Eggers tree,  in Section \ref{section-Eggers factorization}. Theorem \ref{dec-red-qo} generalizes the factorization from \cite{Casas} on higher order polars to quasi-ordinary singularities (not necessary irreducible) and also the factorization from \cite{GB-GP} to higher order polars. Theorem \ref{Merle} and Proposition \ref{ppppp} extend the statements of \cite[Theorem 6.2]{Forum} to irreducible quasi-ordinary Weierstrass polynomials. Finally in Section \ref{section-Eggers series} we establish that  our results also hold for quasi-ordinary power series.

\section{Newton polytopes}
\label{section-Newton-polytopes}
\noindent 
Let $\alpha=\sum \alpha_{\bf i} \underline{x}^{\bf i}\in S[[\underline{x}]]$
be a non zero  formal power series with coefficients in a ring $S$, 
where $\underline{x}=(x_1,\ldots, x_d)$ 
and $\underline{x}^{\bf i}=x_1^{i_1}\cdots x_d^{i_d}$, with ${\bf i}=(i_1,\ldots,i_d)$. 
The {\em Newton polytope} $\Delta(\alpha)\subset \bR^d$ of $\alpha$ 
is the convex hull of the set $\bigcup_{\alpha_{\bf i}\neq 0} {\bf i}+\bR^d_{\geq 0}$. 
By convention the Newton polytope of the zero power series is the empty set.

\noindent  The Newton polytope of a polynomial $f=\sum_{{\bf i},j}a_{{\bf i},j}\underline{x}^{\bf i}y^j\in S[[\underline{x}]][y]$ is the polytope $\Delta(f)\subset \bR^d\times \bR$ 
of $f$ viewed as a power series in $x_1,\ldots, x_d,y$. 
If $\Gamma$  is a compact face of $\Delta(f)$ then $f|_{\Gamma}:=\sum_{({\bf i},j)\in {\Gamma}}a_{{\bf i},j}\underline{x}^{\bf i}y^j\in S[\underline{x}][y]$ is called  the {\em symbolic restriction} of $f$ to $\Gamma$. \\

\smallskip

\noindent We say that a subset of  $\bR^{d+1}$ is a {\em Newton polytope} if it is the Newton polytope of  some polynomial in $S[[\underline{x}]][y]$.

\medskip
\noindent 
Let ${\bf q}=(q_1,\dots,q_d)\in\bQ_{\geq0}^d$ and let 
$k$ be a positive integer. We define the {\em elementary Newton polytope} 
\[
\Bigl\{\Teissr{{\bf q}}{k}{3}{1.5}\Bigr\}:=
\mbox{convex hull}\;\bigl( \{\, (q_1,\dots,q_d,0), (0,\dots,0,k)\,\}
+\bR_{\geq0}^{d+1}\bigr)\;.
\]

\noindent 
Its {\em inclination} is, by definiton, $\frac{1}{k}{\bf q}$.

\noindent 
We denote by $\Bigl\{\Teissr{\infty}{k}{3}{1.5}\Bigr\}$ the Newton polytope $\Delta(y^k)$, which is the first orthant translated by $(0,\dots,0,k)$. By convention we consider it as an elementary polytope.

\begin{Example} The elementary Newton polytope $\left\{\Teissr{(4,2)}{8}{5}{2.5}\right\}$ is
\begin{center}
\begin{tikzpicture}[scale=2.5]
\draw [->](0,0,0) -- (2.2,0,0); \draw[->](0,0,0) -- (0,1.5,0) ; \draw(0,0,0) -- (0,0,-1);
\draw[thick] (0,0,0) -- (0,1,0) node[left] {$(0,\!0,\!8)$};
\draw[dashed](0,1,0) -- (0,1,-2);
\draw[very thick](0,1,0)-- (2,1,0);
\draw[very thick](0,1,0)-- (0,1.5,0);
\draw[very thick] (0,1,0) -- (0.5,0,-0.25) node[left] {$(4,\!2,\!0)$};
\draw[very thick](0.5,0,-0.25) -- (2,0,-0.25);
\draw[dashed](0.5,0,-0.25) -- (0.5,0,-3);
\draw[->][dashed](0,0,-1) -- (0,0,-3);
\node[draw,circle,inner sep=2pt,fill=black] at (0,1,0) {};
\node[draw,circle,inner sep=2pt,fill=black] at (0.5,0,-0.25) {};
\end{tikzpicture}
\end{center}
\end{Example}

\medskip

\noindent A Newton polytope is {\em polygonal} if the maximal dimension of its compact faces is one. 

\medskip

\noindent Remember that the Minkowski sum of $A,B\subset  \bR^{d+1}$ is the set $A+B:=\{a+b\;:\;a\in A,b\in B\}$.  
If a Newton polytope $\Delta$
has a representation of the type
\begin{equation}
\label{canonical}
 \Delta= \sum_{i=1}^r \left\{\Teissr{{\bf q}_i}{k_i}{2}{1}\right\},
\end{equation}

\noindent then summing all the elementary Newton polytopes of  the same inclination in \eqref{canonical} we obtain a unique representation, up to the order of the terms,  called {\em canonical representation} of $\Delta$. If the inclinations can be well-ordered then $\Delta$ is polygonal.
\bigskip

\subsection{Newton polytopes and  factorizations}

\noindent Let $\bK$ be a field of characteristic zero.
We denote by $\bK[[x_1^{1/k},\dots, x_d^{1/k}]]$ the ring 
of fractional power series in $d$ variables where all the exponents 
are nonnegative rational numbers with denominator $k\in \bN\backslash\{0\}$.
Put $\bK[[\underline{x}^{1/\bN}]]:=
\bigcup_{k\in \bN\backslash\{0\}}\bK[[x_1^{1/k},\dots, x_d^{1/k}]]$. We will denote by 
\[
\alpha\bK[[\underline{x}^{1/\bN}]]=\{\alpha w: w\in \bK[[\underline{x}^{1/\bN}]]\}
\]

\noindent the ideal of $\bK[[\underline{x}^{1/\bN}]]$ generated by $\alpha \in \bK[[\underline{x}^{1/\bN}]]$. 

\medskip
\noindent A {\em Weierstrass polynomial} is a monic polynomial where the coefficients different from the leading coefficient are non-units of the formal power series. Notice that, according to this definition, the constant polynomial 1 is a Weierstrass polynomial.

\medskip
\noindent The next lemma gives sufficient conditions for reducibility of Weierstrass polynomials. One of the consequences of this lemma is that a Weierstrass polynomial 
with a polygonal Newton polytope admits a decomposition 
into coprime factors such that the Newton polytope of each factor is elementary (see Theorem \ref{Th:decomp}, see also \cite[Theorem 3]{GB-GP}). 

\begin{Lemma}
\label{RS}
 Let $g=y^m+c_1 y^{m-1}+\cdots + c_m\in \bK[[\underline{x}]][y]$ be a Weierstrass polynomial. Assume that there exists ${\bf q} \in \bQ^d$ such that 
$c_i\bK[[\underline{x}^{1/\bN}]]\subseteq \underline{x}^{i {\bf q}}\bK[[\underline{x}^{1/\bN}]]$ for all $1\leq i\leq m$ with equality for some $i=i_0$, $1\leq i_0 <m$ and strict inclusion for $i=m$. Then $g$ has at least two coprime factors.
\end{Lemma}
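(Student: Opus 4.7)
My strategy will be to turn the Newton polytope hypothesis into a Hensel-lifting situation via a weighted rescaling of $y$. I would let $k\in\bN$ be a common denominator of the components of ${\bf q}$ and set $A:=\bK[[x_1^{1/k},\dots,x_d^{1/k}]]$, a complete local ring with maximal ideal $\mathfrak m$ and residue field $\bK$. The hypothesis $c_i\bK[[\underline x^{1/\bN}]]\subseteq \underline x^{i{\bf q}}\bK[[\underline x^{1/\bN}]]$ allows me to write each $c_i=\underline x^{i{\bf q}}h_i$ with $h_i\in A$; after the substitution $y=\underline x^{\bf q}z$ and division by $\underline x^{m{\bf q}}$ I obtain a monic polynomial
\[
\tilde g(z):=\underline x^{-m{\bf q}}g(\underline x^{\bf q}z)=z^m+h_1z^{m-1}+\cdots+h_m\in A[z].
\]

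Next I would analyse the reduction $\bar g(z)\in \bK[z]$ modulo $\mathfrak m$. The strict inclusion at $i=m$ forces $h_m\in\mathfrak m$, so $\bar g(0)=0$ and $z\mid\bar g$. The equality at $i=i_0<m$ forces $h_{i_0}$ to be a unit, so the coefficient of $z^{m-i_0}$ in $\bar g$ is nonzero; in particular $\bar g$ is not a pure power of $z$. Consequently I can factor $\bar g(z)=z^a\bar p(z)$ in $\bK[z]$ with $0<a<m$ and $\bar p(0)\ne 0$, and the two factors are coprime.

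Because $A$ is complete local, Hensel's lemma lifts this coprime factorisation to $\tilde g=\tilde G_1\tilde G_2$ in $A[z]$ with $\tilde G_1\equiv z^a$ and $\tilde G_2\equiv\bar p\pmod{\mathfrak m}$. Undoing the substitution by setting $G_j(y):=\underline x^{(\deg\tilde G_j){\bf q}}\tilde G_j(y\underline x^{-{\bf q}})$ yields a coprime factorisation $g=G_1G_2$ in $A[y]$. The delicate final step, and the main obstacle of the proof, will be descending this factorisation back to $\bK[[\underline x]][y]$. My plan is to use Galois invariance: the group $(\mu_k)^d$ acting on $A$ over $\bK[[\underline x]]$ permutes the irreducible factors of $g$, but cannot interchange $G_1$ and $G_2$, which are distinguished by their weighted initial forms (a pure power of $z$ versus a polynomial with nonzero constant term), so each $G_j$ is Galois-stable and lives in $\bK[[\underline x]][y]$. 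Alternatively, this last step is precisely the Rond-Schober irreducibility criterium cited in the introduction, applied in its contrapositive form.
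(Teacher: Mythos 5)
Your proof is correct, but it takes a genuinely different route from the paper's. The paper disposes of the lemma by verifying the hypotheses of the Rond--Schober irreducibility criterion \cite[Theorem 2.4]{R-S}: after choosing $i_0$ maximal, the segment $\Gamma$ joining $(0,\dots,0,m)$ to $(i_0{\bf q},m-i_0)$ is an edge of $\Delta(g)$, the symbolic restriction $g|_{\Gamma}=y^{m-i_0}\tilde g$ with $\tilde g$ coprime to $y$ is visibly not a power of an irreducible quasi-homogeneous polynomial, and the associated polyhedron is $m{\bf q}+\bR_{\geq0}^d$, so the cited criterion gives the conclusion at once. You instead give a self-contained argument: the weighted substitution $y=\underline x^{\bf q}z$ converts the hypotheses into the statement that $\tilde g$ reduces, modulo the maximal ideal of $\bK[[x_1^{1/k},\dots,x_d^{1/k}]]$, to $z^a\bar p(z)$ with $0<a<m$ and $\bar p(0)\neq0$; Hensel's lemma lifts this coprime factorization; and $(\mu_k)^d$-Galois descent returns it to $\bK[[\underline x]][y]$, the two factors being distinguishable because every irreducible factor of $G_1$ (resp.\ $G_2$) has rescaled reduction a power of $z$ (resp.\ a polynomial with nonzero constant term), a property preserved by the Galois action. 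All the steps check out (note that the equality at $i=i_0$ forces $i_0{\bf q}\in\bN^d$, hence ${\bf q}\geq0$, so the substitution and the return to $A[y]$ are legitimate). Your proof is longer but removes the external dependency, is essentially a direct proof of the special case of the Rond--Schober criterion being invoked, and as a bonus exhibits the degrees of the two coprime factors. Your closing remark that the descent step alone ``is precisely the Rond--Schober criterion'' is a slight mislabel---that criterion replaces the whole argument, not just the descent---but this does not affect correctness.
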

\begin{proof} We will apply \cite[Theorem 2.4]{R-S}. Without lost of generality we may assume that $i_0$ is the maximal index $i\in\{1,\ldots,m-1\}$ such that $c_i\bK[[\underline{x}^{1/\bN}]]=\underline{x}^{i {\bf q}}\bK[[\underline{x}^{1/\bN}]]$. Then the segment $\Gamma$ with endpoints $(0,\dots,0, m)$ and $(i_0{\bf q}, m-i_0)$ is an edge of~$\Delta(g)$. The symbolic restriction of $g$ to $\Gamma$ is the product $g|_{\Gamma}=y^{m-i_0}\cdot \tilde g$, where $\tilde g \in \bK[\underline{x}][y]$  is coprime with $y$. The associated polyhedron of $g$, in the sense of Rond-Schober (see \cite[page 4732]{R-S} is $m{\bf q}+ \bR_{\geq0}^{d}$. Hence the polynomial $g$ verifies the hypothesis of  \cite[Theorem 2.4]{R-S} and the lemma follows. 
\end{proof}

\medskip 
\noindent 
\begin{Remark}
\label{geomRS}
The assumptions of Lemma~\ref{RS} mean geometrically that the Newton polytope $\Delta(g)$
is included in the elementary polytope $\left\{\Teissr{m{\bf q}}{m}{4}{2}\right\}$, and $\Delta(g)$ has an edge $\Gamma$, which endpoints $(0,\dots,0, m)$ and $(i_0{\bf q}, m-i_0)$, for some $1\leq i_0<m$. The next picture illustrates the situation:

\begin{center}
\begin{tikzpicture}[scale=2.5]
\draw [->](0,0,0) -- (1.5,0,0); \draw[->](0,0,0) -- (0,1,0) ; 
\draw[->](0,0,0) -- (0,0,1.5);
\draw[thick] (0,0.7,0) -- (0.7,0,1);
\draw[very thick, dashed, color=blue] (0,0.7,0) -- (0,1.3,0);
\draw[very thick, dashed, color=blue] (0,0.7,0) -- (0,0.7,1.3);
\draw[very thick, dashed, color=blue] (0,0.7,0) -- (1.3,0.7,0);
\draw[very thick, color=blue] (0.385,0.315,0.55) -- (0,0.7,0);
\draw[very thick, color=blue] (0.385,0.315,0.55) -- (1,0,1);
\draw[very thick, color=blue] (0.385,0.315,0.55) -- (0.7,0,1.4);
\draw[very thick, color=blue] (1,0,1) -- (0.7,0,1.4);
\draw[very thick, dashed, color=blue ] (0.7,0,1.4) -- (0.7,0,2.55);
\draw[very thick, dashed, color=blue] (1,0,1) -- (1.9,0,1);
\draw[very thick, dashed, color=blue] (0.385,0.315,0.55) -- (1.5,0.315,0.55);
\draw[very thick, dashed, color=blue] (0.385,0.315,0.55) -- (0.385,0.315,2);
\draw (0,0.8,0) node[right]{$({\bf 0}, m)$};
\node[right,above] at (0.785,0.315,0.55) {$(i_0{\bf q}, m-i_0)$};
\draw (1,0,1) node[below]{$(m{\bf q},0)$};
\node[draw,circle,inner sep=1.4pt,fill=black] at (0,0.7,0) {};
\node[draw,circle,inner sep=1.4pt,fill=black] at (0.385,0.315,0.55) {};
\node[draw,circle,inner sep=1.4pt,fill=black] at (0.7,0,1) {};
\end{tikzpicture}
\end{center}
\end{Remark}

\begin{Theorem}\label{Th:decomp}
Let $f\in \bK[[\underline{x}]][y]$ 
be a Weierstrass polynomial.
Assume that $\Delta(f)$ is a polygonal Newton polytope  with canonical representation
$\sum_{i=1}^r \left\{\Teissr{{\bf q}_i}{k_i}{2}{1}\right\}$. 
Then $f$ admits a factorization $f_1\cdots f_r$, where 
$f_i\in \bK[[\underline{x}]][y]$ are Weierstrass polynomials, not necessarily irreducible, such that 
$\Delta(f_i)=\left\{\Teissr{{\bf q}_i}{k_i}{2}{1}\right\}$ for $i=1,\dots,r$.
\end{Theorem}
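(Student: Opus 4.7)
I plan to prove this by induction on $r$, the number of elementary summands in the canonical representation of $\Delta(f)$. The base case $r = 1$ is trivial with $f_1 := f$. For $r \ge 2$, the polygonal hypothesis lets me totally order the inclinations ${\bf q}_i/k_i$; I relabel so that ${\bf q}_1/k_1$ is the componentwise minimum, and set ${\bf q} := {\bf q}_1/k_1$, $m := \deg_y f = k_1 + \cdots + k_r$. The strategy is to apply Lemma~\ref{RS} with this ${\bf q}$ to peel off a factor whose Newton polytope is $\{\Teissr{{\bf q}_1}{k_1}{2}{1}\}$, and then induct on the complementary factor.

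The hypotheses of Lemma~\ref{RS} must be verified with this choice of ${\bf q}$. Since ${\bf q}_i \ge k_i {\bf q}$ componentwise for every $i$ (by minimality of ${\bf q}_1/k_1$), we have $\{\Teissr{{\bf q}_i}{k_i}{2}{1}\} \subseteq \{\Teissr{k_i{\bf q}}{k_i}{2}{1}\}$; Minkowski-summing gives $\Delta(f) \subseteq \{\Teissr{m{\bf q}}{m}{2}{1}\}$, which is equivalent to $c_i\bK[[\underline{x}^{1/\bN}]] \subseteq \underline{x}^{i{\bf q}} \bK[[\underline{x}^{1/\bN}]]$ for all $i$. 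The Minkowski-sum description of vertices exhibits $({\bf q}_1, m - k_1)$ as a vertex of $\Delta(f)$ lying on the slanted face of $\{\Teissr{m{\bf q}}{m}{2}{1}\}$ at height $m - k_1$, and since polytope vertices are supported monomials, the coefficient of $\underline{x}^{{\bf q}_1}$ in $c_{k_1}$ is nonzero; this yields the required equality at $i_0 := k_1 < m$. Finally the bottom vertex ${\bf q}_1 + \cdots + {\bf q}_r$ of $\Delta(c_m)$ strictly dominates $m{\bf q}$ in at least one coordinate (equality there would force ${\bf q}_i = k_i {\bf q}$ for all $i$, contradicting distinctness of inclinations in the canonical decomposition when $r \ge 2$), giving the strict inclusion at $i = m$.

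Lemma~\ref{RS} then supplies a coprime factorization $f = g h$ into Weierstrass polynomials. The finer statement of \cite[Theorem 2.4]{R-S} shows that this split is controlled by the symbolic restriction $f|_\Gamma = y^{m-k_1}\tilde f$ to the top edge $\Gamma$ of $\Delta(f)$ joining $(0,\dots,0,m)$ and $({\bf q}_1, m-k_1)$, with $\tilde f$ coprime to $y$ of $y$-degree $k_1$; one thereby obtains $\deg_y g = k_1$ and $\Delta(g) = \{\Teissr{{\bf q}_1}{k_1}{2}{1}\}$, whence $\Delta(h) = \sum_{i \ge 2}\{\Teissr{{\bf q}_i}{k_i}{2}{1}\}$ by additivity $\Delta(g) + \Delta(h) = \Delta(f)$. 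Thus $h$ has polygonal Newton polytope with $r-1$ elementary summands, and the inductive hypothesis gives $h = f_2 \cdots f_r$ with $\Delta(f_i) = \{\Teissr{{\bf q}_i}{k_i}{2}{1}\}$; setting $f_1 := g$ completes the proof. The main obstacle is precisely the identification of $\Delta(g)$: Lemma~\ref{RS} on its own only guarantees a coprime split, so pinning down the elementary polytope of the extracted factor requires extracting the relevant information from the construction inside \cite[Theorem 2.4]{R-S}, or alternatively a separate uniqueness argument for Minkowski decompositions of $\Delta(f)$ into Weierstrass Newton polytopes with prescribed $y$-degrees.
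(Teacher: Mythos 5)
Your overall strategy --- peel off the factor of minimal inclination and induct on $r$ --- is viable in principle, and your verification of the hypotheses of Lemma~\ref{RS} for ${\bf q}={\bf q}_1/k_1$ is essentially correct (with two small caveats: you should note that polygonality forces the inclinations to be totally ordered, so a minimal one exists; and when the canonical representation contains a term $\left\{\Teissr{\infty}{k}{3}{1.5}\right\}$ one has $c_m=0$, so your appeal to ``the bottom vertex of $\Delta(c_m)$'' is vacuous, though the strict inclusion then holds trivially). The genuine gap is exactly where you flag it. Lemma~\ref{RS} asserts only that $f$ has at least two coprime factors; it says nothing about their $y$-degrees or Newton polytopes. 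Your claim that the extracted factor $g$ satisfies $\deg_y g=k_1$ and $\Delta(g)=\left\{\Teissr{{\bf q}_1}{k_1}{2}{1}\right\}$ rests on an unproved ``finer statement'' of \cite[Theorem 2.4]{R-S}; a priori the coprime split could distribute the edges of $\Delta(f)$ between the two factors differently, or even split a single edge fractionally between them, since only the Minkowski sum $\Delta(g)+\Delta(h)=\Delta(f)$ is known. Without pinning this down the induction never starts, so as written the proof is incomplete.

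The paper closes this gap by arguing from the bottom up rather than by extraction: factor $f$ into irreducibles $g_1\cdots g_s$; by additivity of Newton polytopes under products and polygonality of $\Delta(f)$, each $\Delta(g_j)$ is a fractional sub-sum $\sum_i b_{ij}\left\{\Teissr{{\bf q}_i}{k_i}{2}{1}\right\}$; by Remark~\ref{geomRS}, i.e.\ the contrapositive of Lemma~\ref{RS}, the Newton polytope of an \emph{irreducible} Weierstrass polynomial must be elementary, so for each $j$ exactly one $b_{ij}$ is nonzero; comparing edges gives $\sum_j b_{ij}=1$ for each $i$, and grouping the $g_j$ by inclination produces the $f_i$. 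If you want to keep your inductive scheme, the cleanest repair is to import exactly this observation --- every irreducible factor of $f$ has an elementary Newton polytope --- after which $g$ can simply be \emph{defined} as the product of the irreducible factors whose polytopes have inclination ${\bf q}_1/k_1$, and no appeal to the internal construction of Rond--Schober is needed.
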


\noindent 
\begin{proof}
Let $f=g_1\cdots g_s$ be the factorization of $f$ into irreducible Weierstrass polynomials. 
Since  the Newton polytope of a product is the Minkowski sum of the Newton polytopes 
of the factors, by hypothesis
we get $\Delta(g_j)=\sum_{i=1}^r b_{ij} \left\{\Teissr{{\bf q}_i}{k_i}{2}{1}\right\}$ 
for some $b_{ij}\in \bQ_{\geq 0}$.
By Remark \ref{geomRS} $\Delta(g_j)$ is elementary, hence for fixed $j$  only one term of the previous sum is nonzero. On the other hand, for fixed $i$, we get $\sum_{j}b_{ij}=1$. Put $f_i:=\prod g_j$, where the product runs over all $g_j$ such that $b_{ij}\neq 0$. 
Then  $f=f_1\cdots f_r,$ where $\Delta(f_i)=\left\{\Teissr{{\bf q}_i}{k_i}{2}{1}\right\}$ 
for $i=1,\dots,r$.
\end{proof}

\begin{Theorem}\label{Th:discriminant}
Let $f(y),g(y)\in \bL[y]$ be monic polynomials, where $\bL$ is a field of characteristic zero. 
If $g(y)$ is irreducible in the ring $\bL[y]$ then the polynomial 
$R(T)=\Res_y(T-f(y),g(y))$, where $\Res_y(-,-)$ denotes the resultant, is either irreducible in $\bL[T]$ 
or is a power of an irreducible polynomial. 
\end{Theorem}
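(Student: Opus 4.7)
\medskip
\noindent The plan is to work over an algebraic closure $\bar{\bL}$ of $\bL$ and express $R(T)$ explicitly in terms of the roots of $g$. Factoring $g(y) = \prod_{j=1}^n (y - \beta_j)$ with $\beta_j \in \bar{\bL}$, and using that $g$ is separable in characteristic zero (since it is irreducible), the roots $\beta_1,\dots,\beta_n$ are pairwise distinct. The standard formula for the resultant of an arbitrary polynomial with a monic one then yields
\[
R(T) \;=\; \Res_y(T-f(y),\,g(y)) \;=\; \pm \prod_{j=1}^{n} (T - f(\beta_j)),
\]
the sign depending only on the parity of $n\deg_y f$ and playing no role in questions of irreducibility. (The degenerate case $\deg_y f=0$ is immediate: $R(T)=(T-f)^n$ is already a power of an irreducible.)

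\medskip
\noindent Next I would invoke Galois theory. Let $\bM \subset \bar{\bL}$ be the splitting field of $g$ over $\bL$ and set $G = \mathrm{Gal}(\bM/\bL)$. Irreducibility of $g$ implies that $G$ acts transitively on $\{\beta_1,\dots,\beta_n\}$. Because $f$ has coefficients in $\bL$, every $\sigma \in G$ satisfies $\sigma(f(\beta_j)) = f(\sigma(\beta_j))$, so $G$ permutes the multiset $(f(\beta_1),\dots,f(\beta_n))$; the induced action on the underlying set of distinct values $\{\delta_1,\dots,\delta_s\}$ is therefore still transitive. In particular, $R(T)$ is $G$-invariant and so indeed belongs to $\bL[T]$, as it should.

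\medskip
\noindent The crucial step, and the only point that needs real care, is to verify that each $\delta_i$ occurs with the same multiplicity $m_i$ in $(f(\beta_1),\dots,f(\beta_n))$. Given any pair of indices $i,k$, choose $\sigma \in G$ with $\sigma(\delta_i) = \delta_k$; then the bijection of $\{\beta_1,\dots,\beta_n\}$ induced by $\sigma$ carries the fibre of $f$ over $\delta_i$ onto the fibre over $\delta_k$, forcing $m_i = m_k$. Writing $m$ for this common multiplicity and setting $p(T) := \prod_{i=1}^{s}(T - \delta_i)$, we conclude
\[
R(T) \;=\; \pm\, p(T)^m.
\]
Finally, $p(T)$ is irreducible in $\bL[T]$ because its roots are precisely the $G$-orbit of $\delta_1$, so $p$ is the minimal polynomial of $\delta_1$ over $\bL$. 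This gives the claimed dichotomy: $R$ is irreducible when $m=1$, and a proper power of an irreducible otherwise. I do not anticipate any further obstacle beyond the multiplicity-equality argument just sketched.
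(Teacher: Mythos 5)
Your proof is correct and follows essentially the same route as the paper: write $R(T)=\pm\prod_j(T-f(\beta_j))$ over the splitting field and use transitivity of the Galois group on the roots of $g$, hence on the values $f(\beta_j)$, to conclude all irreducible factors of $R$ coincide. Your explicit verification that each distinct value occurs with the same multiplicity just fills in a detail the paper leaves implicit.
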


\noindent \begin{proof}
Let $y_1,\dots, y_m$ be the roots of $g(y)$ in the algebraic closure of 
the field~$\bL$. Then $R(T)=\prod_{i=1}^m(T-f(y_i))$. Since $\bL$ is a field of characteristic zero and $g(y)$ is irreducible, the Galois 
group of the field extension $\bL\hookrightarrow \bL(y_1,\dots, y_m)$ acts transitively on the set 
$\{y_1,\dots, y_m\}$. It follows that this group acts transitively on the set 
$\{f(y_1),\dots, f(y_m)\}$. Hence if $R=R_1\cdots R_s$ is a factorization of $R=R(T)$  into  irreducible monic polynomials in the ring $\bL[T]$ then $R_i=R_j$ for $i\neq j$. 
\end{proof}

\noindent Next corollary will be used  in the proof of  the main result of the decompositions of higher polars, which is Theorem \ref{pack}.
\begin{Corollary}\label{irred}
Let $f(y)$, $g(y) \in \bK[[\underline{x}]][y]$ be Weierstrass polynomials. 
If the resultant $\Res_y(g(y),f(y)-T) \in \bK[[\underline{x}]][T]$ satisfies the assumptions of Lemma~\ref{RS}, 
then $g(y)$ is not irreducible in the ring $\bK[[\underline{x}]][y]$.
\end{Corollary}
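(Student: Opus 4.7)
The plan is to argue by contradiction. Suppose that $g(y)$ is irreducible in $\bK[[\underline{x}]][y]$. Since $\bK[[\underline{x}]]$ is a unique factorization domain and $g$, being a Weierstrass polynomial, is primitive, Gauss's lemma implies that $g(y)$ remains irreducible in $\bL[y]$, where $\bL$ denotes the field of fractions of $\bK[[\underline{x}]]$.

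Next, I would invoke Theorem \ref{Th:discriminant} with $\bL$ as the base field. Note that, up to the sign $(-1)^{\deg g}$, the polynomial
\[
R(T)\;:=\;\Res_y\bigl(g(y),f(y)-T\bigr)
\]
coincides with $\Res_y(T-f(y),g(y))$. Theorem \ref{Th:discriminant} therefore yields that $R(T)$ is either irreducible in $\bL[T]$ or a power of a single irreducible polynomial of $\bL[T]$.

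On the other hand, by the hypothesis $R(T)\in \bK[[\underline{x}]][T]$ satisfies the assumptions of Lemma \ref{RS}, so that lemma produces a factorization $R(T)=R_1(T)\,R_2(T)$ in $\bK[[\underline{x}]][T]$ where $R_1$ and $R_2$ are non-constant and coprime. Passing to $\bL[T]$, Gauss's lemma ensures that $R_1$ and $R_2$ remain coprime there as well. This is incompatible with the alternative from Theorem \ref{Th:discriminant}: a power $P^k$ of an irreducible polynomial in $\bL[T]$ cannot be split as a product of two non-constant coprime factors, since every non-trivial factor must be some positive power of $P$. The contradiction proves that $g(y)$ is not irreducible in $\bK[[\underline{x}]][y]$.

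The only delicate point is the transition between $\bK[[\underline{x}]][y]$ and $\bL[y]$ (both for the irreducibility of $g$ and for the coprimality of the factors of $R$), which is standard since $\bK[[\underline{x}]]$ is a UFD. A minor bookkeeping issue is the leading coefficient $(-1)^{\deg g}$ of $R(T)$ viewed as a polynomial in $T$; this is a unit and therefore does not affect either the hypotheses of Lemma \ref{RS} or the factorization properties invoked from Theorem \ref{Th:discriminant}.
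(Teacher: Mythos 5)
Your argument is correct and is essentially the paper's own proof: combine Lemma \ref{RS} (which forces $R(T)$ to have two coprime non-unit factors) with Theorem \ref{Th:discriminant} (which, for $g$ irreducible over $\bL=\bK((\underline{x}))$, forces $R(T)$ to be a power of a single irreducible in $\bL[T]$), and transfer irreducibility between $\bK[[\underline{x}]][y]$ and $\bL[y]$ via Gauss's lemma. The only difference is presentational — you run it as a contradiction while the paper states it contrapositively — and your remarks on the sign of the resultant and on coprimality surviving the passage to $\bL[T]$ are correct but inessential.
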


\noindent \begin{proof}
By Lemma \ref{RS} the polynomial $R(T)$ has at least two coprime factors. 
By Theorem \ref{Th:discriminant}, $g(y)$, considered as a polynomial 
in $\bK((\underline{x}))[y]$, is not irreducible, 
thus by Gauss Lemma it is not irreducible as a polynomial in $\bK[[\underline{x}]][y]$.
\end{proof}

\begin{Remark}
\label{Beata}
Beata Hejmej in \cite{Beata} generalizes Theorem \ref{Th:discriminant} to polynomials with coefficients in a field of any characteristic. Hence the results of this section hold for fields of arbitrary characteristic.
\end{Remark}

\section{Kuo-Lu tree of a quasi-ordinary polynomial}
\label{section-Kuo-Lu-tree}
\noindent 
From now  on $\bK$ will be an algebraically closed field of characteristic zero. Let  $f(y)\in \bK[[\underline{x}]][y]$ be a  Weierstrass polynomial of degree $n$. Such a polynomial is \emph{quasi-ordinary} if its $y$-discriminant 
equals $\underline{x}^{\bf i}u(\underline{x})$, where $u(\underline{x})$ is a unit in  $\bK[[\underline{x}]]$ and ${\mathbf i}\in \bN^{d}$. 
After Jung-Abhyankar theorem (see \cite[Theorem 1.3]{Parusinski-Rond}) the roots of $f$ are in the ring 
$\bK[[\underline{x}^{1/\bN}]]$
of fractional power series and we may factorize 
$f(y)$ as $\prod_{i=1}^n(y-\alpha_i)$, 
where $\alpha_i$ is zero or a fractional power series of nonnegative order.  
Put $\Zer f:=\{\alpha_i\,:\;1\leq i\leq n\}$. 
Since the differences of roots divide the discriminant,  for $i\neq j$ we have
\begin{equation}
\label{contact}
\alpha_i-\alpha_j=\underline{x}^{{\bf q}_{ij}}v_{ij}(\underline{x}),\;\;\;
\hbox{\rm for some  }{\bf q}_{ij} \in \bQ^d \;\hbox{\rm and  } v_{ij}(0)\neq0.
\end{equation}

\medskip
\noindent 
The {\em contact} of $\alpha_i$ and $\alpha_j$ is by definition $O(\alpha_i,\alpha_j):={\bf q}_{ij}$. By convention $O(\alpha_i,\alpha_i)=+\infty$. 

\medskip
\noindent 
We introduce  in $\bQ_{\geq 0}^d$ the partial order: ${\bf q} \leq {\bf q}'$ if ${\bf q}'-{\bf q} \in \bQ_{\geq 0}^d$. 
By convention $+\infty$ is bigger than any element of $\bQ_{\geq 0}^d$.

\medskip
\noindent 
After \cite[Lemma 4.7]{B-M}, for every $\alpha_i,\alpha_j,\alpha_k\in \Zer f$ one has $O(\alpha_i,\alpha_k)\leq O(\alpha_j,\alpha_k)$ or $O(\alpha_i,\alpha_k)\geq O(\alpha_j,\alpha_k)$.

\medskip
\noindent 
Moreover, we have the {\em strong triangle inequality:} 

\begin{equation}
\label{STI}
O(\alpha_i,\alpha_j)\geq \min \{O(\alpha_i,\alpha_k), O(\alpha_j,\alpha_k)\}.  \tag{STI}
\end{equation}

\medskip
\noindent In general, we say that the {\em contact} between  the fractional power series $\alpha$ and $\beta$ is {\em well-defined} if and only if $\alpha-\beta=\underline{x}^{{\bf q}}w(\underline{x})$, for some ${\bf q} \in \bQ^d$ and $w\in \bK[[\underline{x}^{1/\bN}]]$  such that  $w(0)\neq0$. In such a case we put $O(\alpha,\beta)={\bf q}$. 

\medskip
\noindent 
Now we construct the {\em Kuo-Lu tree} of a quasi-ordinary Weierstrass polynomial~$f$.
Given ${\bf q} \in \bQ^d_{\geq 0}$ we put  $\alpha_i \equiv \alpha_j$ mod ${\bf q}^{+}$ if $O(\alpha_i,\alpha_j)>{\bf q},$ for $\alpha_i,\alpha_j\in \Zer f$.
\noindent   Let ${\bf h}_0$ be the minimal contact between the elements of $\Zer f$. We represent $\Zer f$ as a horizontal bar $B_0$ and call $h(B_0)$ the {\em height} of $B_0$. The equivalence relation $\equiv$ mod $h(B_0)^+$ divides $B_0=\Zer f$ into cosets $B_1, \ldots, B_r$.  We draw $r$ vertical segments from the bar $B_0$ and at the end of the $j$th vertical segment we draw a horizontal bar which represents $B_j$. The bar $B_j$ is called a {\em postbar} of $B_0$ and in such a situation we write $B_0 \perp B_j$. We repeat this construction recursively for every $B_j$ with at least two elements.  The set of bars ordered by the inclusion relation is a tree. Following \cite{K-L} we call this tree the {\em Kuo-Lu tree} of $f$ and denote it $T(f)$. The bar $B_0$ of minimal height is called the {\em root} of $T(f)$. For every bar $B$ of $T(f)$ there exists a unique sequence $B_0\perp B' \perp B'' \perp \cdots \perp B$, starting in $B_0$ and ending in $B$. 

\medskip
\noindent In the above construction, we do not draw the bars $\{\alpha_i\}\subset \Zer f$. These bars are the {\em leaves} of $T(f)$ and they are the only bars of infinite height.

\medskip
\noindent Let $B,B'\in T(f)$ be such that $B\perp B'$. All fractional power series belonging to $B'$ have the same term with the exponent $h(B)$. Let $c$ be the coefficient of such term. We say that $B'$ {\em is supported at the point} $c$ on $B$ and we denote it by $B\perp_c B'$. Observe that different postbars of $B$ are supported at different points.

\medskip
\noindent This construction is adapted from \cite{K-L} to quasi-ordinary case. 

\begin{Example}\label{ex:KL}
Let $f=f_1f_2\in \bC[[x_{1},x_{2}]][y]$, where $f_1=y^2-x_1^{3}x_2^2$ and $f_2=y-x_1^{5}x_2^2$. Observe that $f$ is quasi-ordinary since its $y$-discriminant equals $4x_{1}^{9}x_{2}^{6}(-1+x_{1}^{7}x_{2}^{2})^{2}$. The roots of $f$ are  $\alpha=x_1^{3/2}x_2$, $\beta=-x_1^{3/2}x_2$ and $\gamma=x_1^{5}x_2^2$. The Kuo-Lu tree of $f$ is:
\begin{center}
\begin{tikzpicture}[scale=1]
\draw [-, thick](0,0) -- (0,1); 
\draw[-, thick](-1,1) -- (1,1); 
\draw[-, thick](-1,1) -- (-1,2); 
\draw[-, thick](1,1) -- (1,2); 
\draw[-, thick](-0.2,1) -- (-0.2,2); 
\node[right] at (1,1) {$\left(\frac{3}{2},1\right)$};
\node[above] at (-1,2) {$\alpha$};
\node[above] at (-0.2,2) {$\beta$};
\node[above] at (1,2) {$\gamma$};
\node[below] at (0,0) {$T(f)$};
\end{tikzpicture}
\end{center}
\end{Example}

\noindent In the above picture we draw also a vertical segment supporting $T(f)$ called by Kuo and Lu in \cite{K-L} the  {\em main trunk} of the tree.

\section{Compatibility with pseudo-balls}
\label{section-Compatibility with pseudo-balls}
Let $\alpha\in\bK[[\underline{x}^{1/\bN}]]$ be a fractional power series and ${\bf h}\in \bQ^d_{\geq 0}$. 
The {\em pseudo-ball} centered in $\alpha$ and of height ${\bf h}$ is the set $\alpha+\underline{x}^{\bf h}\bK[[\underline{x}^{1/\bN}]]$. The {\em pseudo-ball centered in $\alpha$ of infinite height} is the set
$\{\alpha\}$.

\medskip
\noindent
Let $f$ be a quasi-ordinary polynomial $f$. Consider the bar $B=\{\alpha_{i_1},\dots,\alpha_{i_s}  \}$ with finite height 
${\bf h}$ of the Kuo-Lu tree $T(f)$. Set 
$\tilde B:=\alpha + \underline{x}^{\bf h}\bK[[\underline{x}^{1/\bN}]]$, where 
$\alpha \in B$.
As $\alpha_{i_k}-\alpha_{i_l}\in \underline{x}^{\bf h}\bK[[\underline{x}^{1/\bN}]]$ for $1\leq k\leq l \leq s$  the pseudo-ball $\tilde B$ is independent of the choice 
of $\alpha$. 
If $B=\{\alpha_i \}$ is a bar of infinite height then we put $\tilde B= B$.
The mapping $B\to \tilde B$ is a one-to-one correspondence between $T(f)$ and the 
set of pseudo-balls 
$\tilde T(f):=\{\alpha_i+(\alpha_i-\alpha_j)\bK[[\underline{x}^{1/\bN}]]: \alpha_i, \alpha_j\in\Zer f\}$.
For the purposes of this article it is easier to deal with pseudo-balls, 
hence from now on, we shall identify the elements of $T(f)$ 
with corresponding pseudo-balls. 
Such pseudo-balls  will be called {\em quasi-ordinary pseudo-balls}. 

\medskip
\noindent 
Let $B=\alpha + \underline{x}^{h(B)}\bK[[\underline{x}^{1/\bN}]]$ be a quasi-ordinary pseudo-ball of finite height. 
Every $\gamma\in B$  has a form
$\gamma=\lambda_B(\underline{x})+c_{\gamma}\underline{x}^{h(B)}+\cdots$, 
where $\lambda_B(\underline{x})$ is obtained from any $\beta\in B$ 
by omitting all the terms of order bigger than or equal to $h(B)$ and ellipsis means terms of higher order.
We call the number $c_{\gamma}$ the {\em leading coefficient of } 
$\gamma$ {\em with respect to} $B$ and denote it $\lc_B(\gamma)$. 
Remark that $c_{\gamma}$ can be zero.

\medskip
\noindent
Let $\bL$ be the field of fractions of $\bK[[\underline{x}]]$. 
It follows from \cite[Remark 2.3]{GP} that any truncation of a root of a quasi-ordinary polynomial is a root of a quasi-ordinary polynomial. Hence the field extensions  $\bL \hookrightarrow \bL(\lambda_B(\underline{x})) \hookrightarrow \bL(\lambda_B(\underline{x}), \underline{x}^{h(B)}) $ are algebraic
and we can associate with $B$  two numbers:

\label{pagen}
\begin{itemize}
\item the degree of the field extension $\bL \hookrightarrow \bL(\lambda_B(\underline{x}))$ that we will denote $N(B)$,
\item the degree of the field extension $\bL(\lambda_B(\underline{x})) \hookrightarrow \bL(\lambda_B(\underline{x}), \underline{x}^{h(B)})$ that we will denote
$n(B)$.
\end{itemize}
 
\medskip
\noindent 
In this section we introduce the notion of \emph {compatibility} of a Weierstass polynomial $g$ 
with a pseudo-ball $B$. 
We define a polynomial $G_B(z)$ which will play an important role in the sequel. 

\begin{Definition} \label{compatible} 
Let $g(y)\in \bK[[\underline{x}]][y]$ be a Weierstrass polynomial and $B$ be a pseudo-ball
of finite height. If 
\begin{equation}\label{eq:comp}
g(\lambda_B(\underline{x}) + z\underline{x}^{h(B)})=G_B(z)\underline{x}^{q(g,B)}+\cdots
\end{equation}
for some $G_B(z)\in\bK[z]\setminus\{0\}$ and some exponent $q(g,B)\in (\mathbf Q_{\geq 0})^d$
then we will say that $g$~is compatible with $B$.  In \eqref{eq:comp} $\cdots$ means terms of higher order.
The polynomial $G_B(z)$ will be called the $B$-{characteristic polynomial} of $g$.
\end{Definition}

\begin{Example}

Return to Example \ref{ex:KL}.  Let $B=\alpha+x_{1}^{3/2}x_{2}\bK[[\underline{x}^{1/\bN}]]$ be a pseudo-ball of $T(f)$ of height $h(B)=\left(\frac{3}{2},1\right)$. 
Observe that 
\[
f(\lambda_{B}+z\underline{x}^{h(B)})=f(z\underline{x}^{\left(\frac{3}{2},1\right)})=z(z^{2}-1)x_{1}^{9/2}x_{2}^{3}+\cdots\]
Hence the polynomial $f$ is compatible with the pseudo-ball  $B$ and its $B$-characteristic polynomial is $F_B(z)=z(z^2-1)$, but for example   the polynomial
$g(y)=y-x_{1}-x_{2}$  is not compatible with $B$. 
\end{Example}

\noindent Our next goal  is to prove in Corollary \ref{CCKiel} that if a Weierstrass  polynomial is compatible with a pseudo-ball then any factor of it is  also  compatible.

\medskip

\begin{Lemma}\label{L:compatibility}
Let $g(y)\in \bK[[\underline{x}]][y]$ be a Weierstrass polynomial and let 
$B$ be a pseudo-ball of finite height.  
Consider $g(\lambda_B(\underline{x})+z\underline{x}^{h(B)})$ 
as a fractional power series $\tilde g(\underline{x})$  
with coefficients in $\bK[z]$. Then $g(y)$ is compatible with $B$ 
if and only if the Newton polytope of $\tilde g(\underline{x})$  
equals the Newton polytope of a monomial. \end{Lemma}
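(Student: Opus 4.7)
The plan is to unpack both sides of the equivalence and show they describe the same condition on the exponents of $\underline{x}$ appearing in $\tilde g$. I would begin by recording two elementary observations: first, that because $g(y)$ is a polynomial in $y$ of degree $n$, the substitution $y=\lambda_B(\underline{x})+z\underline{x}^{h(B)}$ produces $\tilde g(\underline{x})=\sum_{\mathbf{i}} h_{\mathbf{i}}(z)\,\underline{x}^{\mathbf{i}}$ with each coefficient $h_{\mathbf{i}}(z)\in\bK[z]$ of degree at most $n$; second, that since $\lambda_B(\underline{x})$ and $\underline{x}^{h(B)}$ both have exponents in $(\mathbf Q_{\geq 0})^d$, the exponents $\mathbf{i}$ appearing in $\tilde g$ also lie in $(\mathbf Q_{\geq 0})^d$. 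Thus $\Delta(\tilde g)\subseteq\bR_{\geq 0}^d$, and the Newton polytope of a nonzero monomial $c\underline{x}^{\mathbf{q}}$ is exactly $\mathbf{q}+\bR_{\geq 0}^d$, which is equivalent to saying that $\mathbf{q}$ is the unique vertex and minimum of the polytope for the componentwise partial order.

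For the forward direction, assume compatibility: $\tilde g=G_B(z)\underline{x}^{q(g,B)}+\cdots$ where the ellipsis stands for monomials $\underline{x}^{\mathbf{i}}$ with $\mathbf{i}>q(g,B)$ in the componentwise order of Section 3. Then every $\mathbf{i}$ occurring in $\tilde g$ lies in $q(g,B)+\bR_{\geq 0}^d$, and the vertex $q(g,B)$ itself is attained because its coefficient $G_B(z)$ is nonzero. Consequently $\Delta(\tilde g)=q(g,B)+\bR_{\geq 0}^d$, which is the Newton polytope of the monomial $\underline{x}^{q(g,B)}$.

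For the converse, suppose $\Delta(\tilde g)=\mathbf{q}+\bR_{\geq 0}^d$ for some $\mathbf{q}\in(\mathbf Q_{\geq 0})^d$. The point $\mathbf{q}$ is then the only vertex of $\Delta(\tilde g)$, so it must actually be realized by a nonzero term (otherwise $\Delta(\tilde g)$ would be contained in a strictly smaller translate of the orthant); thus $h_{\mathbf{q}}(z)\neq 0$. Moreover every $\mathbf{i}$ with $h_{\mathbf{i}}(z)\neq 0$ satisfies $\mathbf{i}\in\mathbf{q}+\bR_{\geq 0}^d$, i.e.\ $\mathbf{i}\geq\mathbf{q}$. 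Taking $G_B(z):=h_{\mathbf{q}}(z)\in\bK[z]\setminus\{0\}$ and $q(g,B):=\mathbf{q}$, the expansion reads $\tilde g=G_B(z)\underline{x}^{\mathbf{q}}+\cdots$ with ellipsis exactly the higher order terms in the sense of Definition \ref{compatible}, so $g$ is compatible with $B$.

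There is no serious obstacle here: the whole lemma is essentially a translation between the algebraic statement of Definition \ref{compatible} and the geometric statement that $\Delta(\tilde g)$ has a unique minimum vertex. The only point that deserves explicit mention is the argument that the vertex $\mathbf{q}$ of a polytope of the form $\mathbf{q}+\bR_{\geq 0}^d$ must actually occur as an exponent of $\tilde g$ with nonzero coefficient, and the verification that the coefficient produced lives in $\bK[z]$ rather than in a larger ring; both follow at once from the first paragraph's observations.
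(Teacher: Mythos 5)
Your proof is correct and follows essentially the same route as the paper: the forward direction is immediate from Definition \ref{compatible}, and the converse identifies $G_B(z)$ with the coefficient of the vertex monomial $\underline{x}^{\mathbf q}$ (the paper writes this as $\sum_i a_i(0)z^{n-i}$ after factoring out $\underline{x}^{\mathbf q}$, which is the same polynomial). Your parenthetical justification that the vertex $\mathbf q$ is realized is a little loose — the clean argument is that an extreme point of $\mathrm{conv}\bigl(\bigcup_{\mathbf i}(\mathbf i+\bR_{\geq0}^d)\bigr)$ must be one of the $\mathbf i$ — but the claim is standard and the paper asserts it without proof as well.
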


\noindent \begin{proof}
If $g$ is compatible with $B$ then by (\ref{eq:comp}) we get 
$\Delta\bigl(\tilde g(\underline{x})\bigr)=\Delta\bigl(\underline{x}^{q(g,B)}\bigr)$.
Conversely, suppose that  the Newton polytope of $\tilde g(\underline{x})$
equals the Newton polytope of the monomial $\underline{x}^{\bf q}$. 
Then $\tilde g(\underline{x})$ has a form $\underline{x}^{\bf q}\sum_{i=0}^n a_i(\underline{x})z^{n-i}$, 
where at least one of the values $a_i(0)$ is nonzero.
Hence the $B$-characteristic polynomial of $g$ is $G_B(z)= \sum_{i=0}^n a_i(0)z^{n-i}$.
\end{proof}

\begin{Remark}
\label{tt2}
From the proof of Lemma \ref{L:compatibility} we get that $\tilde g(\underline{x})$ has the form $G_B(z)\underline{x}^{q(g,B)}+ \sum_{h>q(g,B)}a_h(z)\underline{x}^h$, where  $a_h(z)\in\bK[z]$.
\end{Remark}

\begin{Corollary} 
\label{CCKiel}
Let $g\in \bK[[\underline{x}]][y]$ be a Weierstrass polynomial compatible with a pseudo-ball $B$. 
Then any factor of $g$ is compatible with $B$.
\end{Corollary}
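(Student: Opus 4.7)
The plan is to reduce to a two-factor decomposition by induction and apply the Newton-polytope reformulation of compatibility given in Lemma~\ref{L:compatibility}.

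First I would write $g = g_1 g_2$ as a product of two Weierstrass polynomials (every monic factor of a Weierstrass polynomial is itself Weierstrass, possibly after absorbing a unit). The substitution $y \mapsto \lambda_B(\underline{x}) + z\underline{x}^{h(B)}$ is a ring homomorphism from $\bK[[\underline{x}]][y]$ into $\bK[z][[\underline{x}^{1/\bN}]]$, so the images satisfy $\tilde g(\underline{x}) = \tilde g_1(\underline{x}) \cdot \tilde g_2(\underline{x})$. Taking Newton polytopes and using that the Newton polytope of a product is the Minkowski sum of those of the factors yields
\[
\Delta(\tilde g) = \Delta(\tilde g_1) + \Delta(\tilde g_2).
\]
The compatibility hypothesis together with Lemma~\ref{L:compatibility} says that the left-hand side equals $\{{\bf q}\} + \bR_{\geq 0}^d$, the Newton polytope of a monomial, which in particular has a unique minimal vertex ${\bf q}$.

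The crux is then a short Minkowski-indecomposability argument: if a translated orthant is the Minkowski sum of two Newton polytopes, each summand must itself be a translated orthant. Writing ${\bf q} = v_1 + v_2$ with $v_i$ a vertex of $\Delta(\tilde g_i)$, the minimality of ${\bf q}$ in $\Delta(\tilde g)$ forces each $v_i$ to be a minimal vertex of $\Delta(\tilde g_i)$. Any second minimal vertex $v_1' \neq v_1$ of $\Delta(\tilde g_1)$ would give $v_1' + v_2 \in \Delta(\tilde g)$ and hence $v_1' + v_2 \geq v_1 + v_2$ componentwise, so $v_1' \geq v_1$, contradicting the componentwise incomparability of distinct minimal vertices of an upper set. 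Therefore $\Delta(\tilde g_i) = \{v_i\} + \bR_{\geq 0}^d$ is the Newton polytope of a monomial, and Lemma~\ref{L:compatibility} delivers the compatibility of each $g_i$ with $B$.

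The general statement follows by induction on the number of irreducible factors. The main obstacle is really just the Minkowski-indecomposability step in the middle, and even that is elementary once one exploits the uniqueness of the minimal vertex of a translated orthant; the rest is formal bookkeeping around the ring homomorphism and Lemma~\ref{L:compatibility}.
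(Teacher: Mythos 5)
Your proof is correct and follows essentially the same route as the paper: reduce to a two-factor decomposition, apply Lemma~\ref{L:compatibility} to translate compatibility into the Newton polytope of $\tilde g$ being that of a monomial, and use that the Minkowski sum of the factors' polytopes can only be a translated orthant if each summand is one. The paper simply asserts this last indecomposability step, whereas you supply the (correct) unique-minimal-vertex argument for it.
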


\noindent \begin{proof}
The Newton polytope of the product is the Minkowki sum of Newton polytopes of the factors.
Hence, if $\Delta(\tilde g)=\Delta(\underline{x}^{\bf q})$ and $\tilde g=\tilde g_1\tilde g_2$
then $\Delta(\tilde g_i)$ have the form $\Delta(\underline{x}^{{\bf q}_i})$ for some ${\bf q}_1, {\bf q}_2$ such that ${\bf q}={\bf q}_1+{\bf q}_2.$
\end{proof}

\medskip
\noindent Next lemma generalizes to $d$ variables \cite[Lemma 3.1]{Forum}.

\begin{Lemma}\label{derivatives}
Let $f(y)\in \bK[[\underline{x}]][y]$ be a Weierstrass polynomial of degree $n$ compatible with the pseudo-ball $B$. Then for every $k\in\{1,\dots, \deg F_B(z)\}$ the Weierstrass polynomial 
$g(y)=\frac{(n-k)!}{n!}\frac{d^k}{dy^k}f(y)$ is also compatible with~$B$ 
and its $B$-characteristic polynomial is $G_B(z)=\frac{(n-k)!}{n!}\frac{d^k}{dz^k}F_B(z)$.
\end{Lemma}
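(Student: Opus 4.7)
The plan is to obtain the expansion of $g$ centered at $B$ by taking $k$ derivatives in $z$ of the compatibility identity \eqref{eq:comp} for $f$. The key observation is a chain rule: since applying $\partial/\partial z$ to $\lambda_B(\underline{x})+z\underline{x}^{h(B)}$ produces $\underline{x}^{h(B)}$, iterating gives
\begin{equation*}
\frac{\partial^{k}}{\partial z^{k}}\,f\bigl(\lambda_B(\underline{x})+z\underline{x}^{h(B)}\bigr)\;=\;\underline{x}^{k\,h(B)}\,f^{(k)}\bigl(\lambda_B(\underline{x})+z\underline{x}^{h(B)}\bigr),
\end{equation*}
where $f^{(k)}$ denotes the $k$-th $y$-derivative of $f$. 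Multiplying through by $\frac{(n-k)!}{n!}\underline{x}^{-k\,h(B)}$ turns the right-hand side into $g\bigl(\lambda_B(\underline{x})+z\underline{x}^{h(B)}\bigr)$, while the left-hand side is computable from the already-known expansion of $f(\lambda_B+z\underline{x}^{h(B)})$.

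Before applying this I would verify that $g\in \bK[[\underline{x}]][y]$ is itself a Weierstrass polynomial of degree $n-k$: writing $f=y^{n}+\sum_{j=1}^{n}c_j(\underline{x})\,y^{n-j}$ with each $c_j$ a non-unit of $\bK[[\underline{x}]]$, direct differentiation gives $g=y^{n-k}+\sum_{j=1}^{n-k}\frac{(n-j)!(n-k)!}{n!(n-j-k)!}\,c_j(\underline{x})\,y^{n-j-k}$, which is monic with non-unit lower coefficients (the scalar factors are nonzero because $\mathrm{char}\,\bK=0$). Now applying $\partial^{k}/\partial z^{k}$ term-by-term to the expansion supplied by Remark~\ref{tt2},
\begin{equation*}
f\bigl(\lambda_B(\underline{x})+z\underline{x}^{h(B)}\bigr)\;=\;F_B(z)\,\underline{x}^{q(f,B)}+\sum_{h>q(f,B)}a_h(z)\,\underline{x}^{h},\qquad a_h(z)\in\bK[z],
\end{equation*}
(the monomials $\underline{x}^{h}$ are $z$-constants, so the derivative acts only on the coefficients) produces $F_B^{(k)}(z)\,\underline{x}^{q(f,B)}+\sum_{h>q(f,B)}a_h^{(k)}(z)\,\underline{x}^{h}$. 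Combining this with the chain-rule identity and the normalization $\frac{(n-k)!}{n!}\underline{x}^{-k\,h(B)}$ yields
\begin{equation*}
g\bigl(\lambda_B(\underline{x})+z\underline{x}^{h(B)}\bigr)\;=\;\frac{(n-k)!}{n!}F_B^{(k)}(z)\,\underline{x}^{q(f,B)-k\,h(B)}+\sum_{h>q(f,B)}\frac{(n-k)!}{n!}a_h^{(k)}(z)\,\underline{x}^{h-k\,h(B)}.
\end{equation*}

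To conclude: the hypothesis $k\leq \deg F_B(z)$ ensures $F_B^{(k)}(z)\neq 0$; since $g\in\bK[[\underline{x}]][y]$, the substitution above lies in $\bK[z][[\underline{x}^{1/\bN}]]$, which forces the leading exponent $q(f,B)-k\,h(B)$ to be componentwise nonnegative; and the strict inequality $h>q(f,B)$ in the tail guarantees that each remaining exponent $h-k\,h(B)$ is strictly greater than $q(f,B)-k\,h(B)$ in the componentwise order, so no tail term cancels the leading monomial. Lemma~\ref{L:compatibility} then shows that $g$ is compatible with $B$, with $q(g,B)=q(f,B)-k\,h(B)$ and $B$-characteristic polynomial $G_B(z)=\frac{(n-k)!}{n!}F_B^{(k)}(z)$, as claimed. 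There is no real obstacle in this argument: once one recognises the chain rule relating the $z$- and $y$-derivatives, everything reduces to a coefficient-wise differentiation of the known expansion of $f$ and a short bookkeeping of orders.
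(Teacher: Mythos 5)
Your proof is correct and follows essentially the same route as the paper's: differentiate the compatibility identity $f(\lambda_B(\underline{x})+z\underline{x}^{h(B)})=F_B(z)\underline{x}^{q(f,B)}+\cdots$ with respect to $z$ and divide by $\underline{x}^{kh(B)}$ (the paper does $k=1$ and then inducts, whereas you handle all $k$ at once). Your extra bookkeeping — checking that $g$ is Weierstrass, that $F_B^{(k)}\neq 0$ when $k\leq\deg F_B$, and that the tail exponents stay strictly above $q(f,B)-kh(B)$ — only makes explicit what the paper leaves implicit.
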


\noindent \begin{proof} 
\noindent Differentiating  identity 
$f(\lambda_B(\underline{x}) + z\underline{x}^{h(B)})=F_B(z)\underline{x}^{q(f,B)}+\cdots$
with respect to $z$ we get 
$f'(\lambda_B(\underline{x})+z^{h(B)})\underline{x}^{h(B)}=F'_B(z)\underline{x}^{q(f,B)}+\cdots$.  
Hence $f'(\lambda_B(\underline{x})+z\underline{x}^{h(B)})=F'_B(z)\underline{x}^{q(f,B)-h(B)}+\cdots$,  which proves the lemma for $k=1$. 
The proof for higher derivatives runs by induction on $k$. 
\end{proof}

\medskip

\noindent  Let $f(y)\in \bK[[\underline{x}]][y]$ be a Weierstrass polynomial of degree $n$. The Weierstrass polynomial $\frac{(n-k)!}{n!}\frac{d^k}{dy^k}f(y)$ of Lemma \ref{derivatives} will be called the {\it normalized $k$th derivative} of the Weierstrass 
polynomial  $f(y)\in \bK[[\underline{x}]][y]$ and we will denote it by $f^{(k)}(y)$. The variety of equation $f^{(k)}=0$ is called the $k${\em th polar}  of $f=0$. Since  the normalized $n$th derivative of $f$ is constant, in the rest of the paper we consider normalized $k$th derivatives of $f$ for $1\leq k<\deg f$. 

\begin{Lemma}\label{subst}
Let $f(y)\in\bK[[\underline{x}]][y]$ be a  Weierstrass polynomial and 
let $B$ be a pseudo-ball of finite height.
\begin{enumerate}
\item If $f$ is compatible with $B$,  then for any 
$\gamma \in B$ we have
\begin{equation} 
\label{eq:F,q1xxx}
f(\gamma)=F_B(\lc_{B}\gamma)\underline{x}^{q(f,B)}+\cdots\,
\end{equation}
\item If $f(y)=\prod_{i=1}^n(y-\alpha_i)$ and 
we assume that one of the following holds: 
$\underline{x}$ is a single variable and $B$ is arbitrary 
or $f$ is quasi-ordinary and 
$B\in \tilde{T}(f)$
then $f$ is compatible with $B$ and we have

\begin{equation}
\label{eq:F,q2}
F_B(z)=\mbox{const}\prod_{i:\alpha_i\in B}(z-\lc_B\alpha_i)
\end{equation}
and 

\begin{equation}
\label{eq:F,q3}
q(f,B)=\sum_{i=1}^n \min(O(\lambda_B,\alpha_i),h(B)).
\end{equation}
\end{enumerate}
\end{Lemma}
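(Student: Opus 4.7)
For part~(1), my plan is to substitute $\gamma$ directly into the refined compatibility identity. By Remark~\ref{tt2}, compatibility of $f$ with $B$ reads
\[
f(\lambda_B(\underline{x}) + z\underline{x}^{h(B)}) = F_B(z)\,\underline{x}^{q(f,B)} + \sum_{{\bf h} > q(f,B)} a_{{\bf h}}(z)\,\underline{x}^{{\bf h}},
\]
where each $a_{{\bf h}}(z)\in\bK[z]$. Since $\gamma\in B=\lambda_B+\underline{x}^{h(B)}\bK[[\underline{x}^{1/\bN}]]$, I would write $\gamma=\lambda_B(\underline{x})+u(\underline{x})\,\underline{x}^{h(B)}$ with $u\in\bK[[\underline{x}^{1/\bN}]]$ and $u(0)=\lc_B\gamma$. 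Substituting the series $u$ for the formal variable $z$ is legal because each $a_{{\bf h}}$ is a polynomial, and produces $f(\gamma)=F_B(u)\underline{x}^{q(f,B)}+\sum_{{\bf h}>q(f,B)}a_{{\bf h}}(u)\underline{x}^{{\bf h}}$. Expanding $F_B(u)=F_B(\lc_B\gamma)+(\text{terms of strictly positive }\underline{x}\text{-order})$ then yields \eqref{eq:F,q1xxx}.

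For part~(2), the strategy is to expand $f(\lambda_B+z\underline{x}^{h(B)})=\prod_{i=1}^n(\lambda_B+z\underline{x}^{h(B)}-\alpha_i)$ factor by factor. For $\alpha_i\in B$, the identity $\alpha_i=\lambda_B+(\lc_B\alpha_i)\underline{x}^{h(B)}+\cdots$ makes the $i$th factor equal to $(z-\lc_B\alpha_i)\underline{x}^{h(B)}+\cdots$. For $\alpha_i\notin B$, I need $\alpha_i-\lambda_B$ to be a monomial times a unit with exponent $O(\lambda_B,\alpha_i)$ strictly smaller than $h(B)$ in the partial order, so that the $i$th factor reduces to $-u_i(\underline{x})\,\underline{x}^{O(\lambda_B,\alpha_i)}+\cdots$ with $u_i(0)\ne 0$. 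Multiplying the $n$ factors and collecting the $\underline{x}$-leading terms gives \eqref{eq:F,q2} with nonzero constant $\prod_{\alpha_i\notin B}(-u_i(0))$, together with
\[
q(f,B)=h(B)\cdot\#\{i:\alpha_i\in B\}+\sum_{i:\alpha_i\notin B}O(\lambda_B,\alpha_i)=\sum_{i=1}^n\min\bigl(O(\lambda_B,\alpha_i),h(B)\bigr),
\]
since $\min(O(\lambda_B,\alpha_i),h(B))$ equals $h(B)$ for $\alpha_i\in B$ and $O(\lambda_B,\alpha_i)$ for $\alpha_i\notin B$; this yields \eqref{eq:F,q3} and, en passant, the compatibility of $f$ with $B$.

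The main obstacle, I expect, will be verifying that for $\alpha_i\notin B$ the difference $\alpha_i-\lambda_B$ really is a monomial times a unit whose exponent is strictly smaller than $h(B)$ in the partial order. In the single-variable case this is automatic, since $\bK[[x^{1/\bN}]]$ carries a discrete valuation and $\alpha_i\notin B$ forces $\mathrm{ord}(\alpha_i-\lambda_B)<h(B)$. In the quasi-ordinary situation with $B\in\tilde T(f)$, I would pick any $\alpha_j\in B$ and decompose $\alpha_i-\lambda_B=(\alpha_i-\alpha_j)+(\alpha_j-\lambda_B)$. The quasi-ordinary hypothesis on $f$ gives $\alpha_i-\alpha_j=\underline{x}^{O(\alpha_i,\alpha_j)}v_{ij}$ with $v_{ij}(0)\ne 0$ via \eqref{contact}, while the Kuo--Lu tree structure combined with \eqref{STI} forces $O(\alpha_i,\alpha_j)=h(B'')$, where $B''$ is the lowest common ancestor of $\alpha_i$ and $\alpha_j$, necessarily a strict ancestor of $B$, so $h(B'')<h(B)$. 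The second summand $\alpha_j-\lambda_B$ has $\underline{x}$-order $\geq h(B)>h(B'')$ and is therefore absorbed into the unit, giving $\alpha_i-\lambda_B=\underline{x}^{h(B'')}\cdot(\text{unit})$ and $O(\lambda_B,\alpha_i)=h(B'')<h(B)$. This is precisely the step at which both the quasi-ordinary property of $f$ and the hypothesis $B\in\tilde T(f)$ enter in an essential way.
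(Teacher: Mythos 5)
Your proof is correct and follows essentially the same route as the paper's: part (1) by substituting $\gamma=\lambda_B(\underline{x})+u\underline{x}^{h(B)}$ into the refined compatibility expansion of Remark~\ref{tt2}, and part (2) by computing the initial term of each factor $\gamma-\alpha_i$ (splitting into $\alpha_i\in B$ versus $\alpha_i\notin B$) and multiplying. The only difference is that you spell out, via the Kuo--Lu tree and the strong triangle inequality, why $\lambda_B-\alpha_i$ is a monomial times a unit with exponent strictly below $h(B)$ when $\alpha_i\notin B$ --- a point the paper's proof leaves implicit.
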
 

\noindent \begin{proof}
Since $\gamma \in B$ we can write $\gamma=\lambda_B(\underline{x})+z\underline{x}^{h(B)}$, where $z=\lc_B(\gamma)+\cdots$. By Remark \ref{tt2} we have
$f(\gamma)=f(\lambda_B(\underline{x})+z\underline{x}^{h(B)})=F_B(z)\underline{x}^{q(f,B)}+\cdots=F_B(\lc_{B}\gamma)\underline{x}^{q(f,B)}+\cdots$. This proves (\ref{eq:F,q1xxx}).

\medskip

\noindent Suppose $\gamma=\lambda_B(x)+zx^h,$ where $z$ is a constant. We have  $f(\gamma)=\prod_{i=1}^n(\gamma-\alpha_i)$. In order to prove (\ref{eq:F,q2}) and (\ref{eq:F,q3}),  
it is enough to compute the initial term of every factor $\gamma-\alpha_i$. If $\alpha_i\in B$ then the initial term of $\gamma-\alpha_i$ equals 
$(\lc_B\gamma-\lc_{B}\alpha_i)\underline{x}^{h(B)}$. Otherwise the initial terms of $\gamma-\alpha_i$ and $\lambda_B-\alpha_i$ are equal. We finish the proof multiplying the initial terms.
\end{proof}

\begin{Corollary}
Let $f(y)\in \bK[[\underline{x}]][y]$ be a quasi-ordinary Weierstrass polynomial. Then 
every factor of $f(y)$ is compatible with all 
bars $B\in T(f)$ of finite height. 
\end{Corollary}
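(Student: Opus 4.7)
The plan is to chain two earlier results together. The statement is essentially a one-line corollary once the identification of bars with pseudo-balls (established at the beginning of Section~\ref{section-Compatibility with pseudo-balls}) is in place.

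First I would invoke Lemma~\ref{subst}(2) applied to the quasi-ordinary polynomial $f$ itself. Under the one-to-one correspondence $B\leftrightarrow \tilde B$ between bars of $T(f)$ of finite height and pseudo-balls in $\tilde T(f)$, any such bar $B$ is a pseudo-ball of the form $\alpha_i+(\alpha_i-\alpha_j)\bK[[\underline{x}^{1/\bN}]]$ with $\alpha_i,\alpha_j\in \Zer f$. Hence the hypothesis ``$f$ is quasi-ordinary and $B\in\tilde T(f)$'' of Lemma~\ref{subst}(2) is met, and we conclude that $f$ itself is compatible with $B$, with $B$-characteristic polynomial given by the explicit formula \eqref{eq:F,q2}.

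Next I would apply Corollary~\ref{CCKiel}. Any factor $g$ of $f$ in $\bK[[\underline{x}]][y]$ is itself a Weierstrass polynomial (a monic divisor of a Weierstrass polynomial is Weierstrass), and since $f$ is compatible with $B$, Corollary~\ref{CCKiel} yields that $g$ is compatible with $B$ as well. As this holds for every bar $B\in T(f)$ of finite height, the corollary follows.

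There is no genuine obstacle; the only thing to keep straight is the bookkeeping in moving between the combinatorial object ``bar of the Kuo-Lu tree'' and the algebraic object ``pseudo-ball in $\tilde T(f)$''. Once that identification is invoked, the proof is just the composition Lemma~\ref{subst}(2) $\Rightarrow$ Corollary~\ref{CCKiel}.
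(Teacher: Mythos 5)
Your proof is correct and is exactly the argument the paper intends: the corollary is stated without proof precisely because it is the immediate composition of Lemma~\ref{subst}(2) (compatibility of the quasi-ordinary $f$ with every $B\in\tilde T(f)$ of finite height) with Corollary~\ref{CCKiel} (compatibility passes to factors). Nothing is missing.
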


\begin{Lemma}
\label{in-chain}
Let $f(y)\in \bK[[\underline{x}]][y]$ be a quasi-ordinary Weierstrass polynomial, $p(y)$ be a factor of $f(y)$ and $B,B' $ be bars of finite heights in $T(f)$ such that $B\perp B'$. Then 
\[
q(p,B')-q(p,B)=\sharp ( \Zer p \cap B')[h(B')-h(B)].
\]
\end{Lemma}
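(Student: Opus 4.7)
The plan is to first extend formula (\ref{eq:F,q3}) of Lemma \ref{subst}(2) from $f$ itself to any factor $p$ of $f$. Since $\Zer p \subseteq \Zer f$, every contact $O(\lambda_B,\alpha)$ with $\alpha \in \Zer p$ is well-defined whenever it is needed (namely when $\alpha \notin B$), and the Corollary preceding this lemma ensures that $p$ is compatible with both $B$ and $B'$. The argument used to prove (\ref{eq:F,q3}) then applies verbatim: substituting $\gamma = \lambda_B(\underline{x}) + z\underline{x}^{h(B)}$ with $z \in \bK$ into $p(\gamma) = \prod_{\alpha \in \Zer p}(\gamma - \alpha)$ and multiplying the initial terms of the factors yields
\begin{equation*}
q(p,B) = \sum_{\alpha \in \Zer p} \min\bigl(O(\lambda_B,\alpha),\, h(B)\bigr),
\end{equation*}
and similarly for $B'$.

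Subtracting and partitioning $\Zer p$ into $\Zer p \cap B'$, $\Zer p \cap (B \setminus B')$, and $\Zer p \setminus B$, we evaluate the contribution of each index to $q(p,B') - q(p,B)$. For $\alpha \in B'$ both minima are attained by the heights, giving the contribution $h(B') - h(B)$. For $\alpha \notin B$, the inclusion $\lambda_{B'} - \lambda_B \in \underline{x}^{h(B)}\bK[[\underline{x}^{1/\bN}]]$ combined with $O(\lambda_B,\alpha) < h(B)$ forces $O(\lambda_{B'},\alpha) = O(\lambda_B,\alpha)$, so the two minima coincide and the contribution is zero.

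The main obstacle, and the delicate part of the argument, is the case $\alpha \in B \setminus B'$. Fix any $\beta \in B'$. From $\alpha,\beta \in B$ we have $O(\alpha,\beta) \geq h(B)$, and since $B'$ is the coset of $\beta$ in the partition of $B$ by $\equiv$ mod $h(B)^+$, the condition $\alpha \notin B'$ forces $O(\alpha,\beta) \not> h(B)$. In the partial order on $\bQ^d_{\geq 0}$ these two constraints jointly pin $O(\alpha,\beta)$ to equal $h(B)$ exactly. Writing $\lambda_{B'} - \alpha = (\lambda_{B'} - \beta) + (\beta - \alpha)$ with the first summand of order $\geq h(B') > h(B)$ and the second a unit multiple of $\underline{x}^{h(B)}$, we obtain $O(\lambda_{B'},\alpha) = h(B)$. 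Hence both minima equal $h(B)$ and the contribution vanishes. Summing, only the case $\alpha \in \Zer p \cap B'$ survives, each occurrence yielding $h(B') - h(B)$, which proves the claimed identity.
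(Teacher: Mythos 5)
Your proof is correct and follows essentially the same route as the paper's: both reduce the claim to the expression of $q(p,\cdot)$ as a sum of contacts over $\Zer p$ (formula (\ref{eq:F,q3}) of Lemma \ref{subst}, valid for the factor $p$ by Corollary \ref{CCKiel}) and then compare the two sums term by term. The only cosmetic difference is that the paper works with generic points $\gamma\in B$, $\gamma'\in B'$ and invokes the strong triangle inequality to handle all $\alpha\notin B'$ at once, whereas you use $\lambda_B$, the $\min$ formula, and a three-case split — the case analysis for $\alpha\in B\setminus B'$ and $\alpha\notin B$ being exactly what the STI argument packages together.
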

\noindent \begin{proof}
Put $p(y)=\prod_{\alpha \in \Zer p}(y-\alpha)$. Let $\gamma \in B, \gamma' \in B'$ be such that $O(\gamma, \alpha)=h(B)$  for all $\alpha \in B\cap \Zer p$ and
$\cont(\gamma', \alpha)=h(B')$ for all $\alpha \in B'\cap \Zer p$.   By the STI we get $O (\gamma',\alpha)=O(\gamma,\alpha)$ for any $\alpha \in \Zer p\backslash B'$. If $\alpha \in \Zer p \cap B'$  then $O(\gamma,\alpha)=h(B)$ and $O(\gamma',\alpha)=h(B')$. Hence

\begin{eqnarray*}
q(p,B')-q(p,B)&=&\sum_{\alpha \in \Zer p}O (\gamma',\alpha)-\sum_{\alpha \in \Zer p}O (\gamma,\alpha)\\
&=&\sharp (\Zer p \cap B')[h(B')-h(B)].
\end{eqnarray*}
\end{proof}

\noindent Lemma \ref{in-chain} is similar in spirit to \cite[Lemma 2.7]{Hungarica}.

\begin{Lemma}
\label{power}
Let $B$ be a quasi-ordinary pseudo-ball and let $g(y)\in \bK[[\underline{x}]][y]$ be a Weierstrass polynomial  
compatible with $B$. Then 
\begin{enumerate}
\item $G_{B}(z)=z^k\cdot H(z^{n(B)})$, for some $k\in \bN$ and $H(z)\in \bK[z]$. 
\item If $g$ is irreducible and quasi-ordinary then $G_{B}(z)=az^{k}$ or 
 $G_{B}(z)=a(z^{n(B)}-c)^{l}$, for some non-zero $a,c\in \bK$ and some $l\in \bN$.
 \end{enumerate}
\end{Lemma}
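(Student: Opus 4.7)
For part (1) my plan is to exploit a Galois action on the Puiseux ring. Fix a primitive $n(B)$-th root of unity $\zeta$. By the definition of $n(B)=[\bL(\lambda_B,\underline{x}^{h(B)}):\bL(\lambda_B)]$, there exists an $\bL$-automorphism $\sigma$ of a suitable finite Galois extension of $\bL$ containing the Puiseux series involved, that fixes $\lambda_B(\underline{x})$ and sends $\underline{x}^{h(B)}\mapsto \zeta\underline{x}^{h(B)}$. I apply $\sigma$ to the compatibility identity
\[
g(\lambda_B(\underline{x})+z\underline{x}^{h(B)})=G_B(z)\underline{x}^{q(g,B)}+\cdots,
\]
treating $z$ as an indeterminate fixed by $\sigma$. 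The left side becomes $g(\lambda_B+\zeta z \underline{x}^{h(B)})=G_B(\zeta z)\underline{x}^{q(g,B)}+\cdots$, and the right side becomes $G_B(z)\sigma(\underline{x}^{q(g,B)})+\cdots$. Since $\sigma$ has finite order and fixes $\underline{x}$, it acts on the monomial $\underline{x}^{q(g,B)}$ by multiplication by some root of unity $\eta$; matching leading $\underline{x}$-monomials gives $G_B(\zeta z)=\eta G_B(z)$. A short auxiliary step shows that $\eta$ must be an $n(B)$-th root of unity: running the same manipulation with any $\sigma_0$ in the subgroup fixing both $\lambda_B$ and $\underline{x}^{h(B)}$ forces $\sigma_0(\underline{x}^{q(g,B)})=\underline{x}^{q(g,B)}$, so the character $\sigma\mapsto\eta$ factors through the cyclic quotient of order $n(B)$. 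Writing $\eta=\zeta^k$ and comparing the monomials in $G_B(\zeta z)=\zeta^k G_B(z)$ forces $G_B(z)=z^kH(z^{n(B)})$.

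For part (2), from part (1) it suffices to show that $H(w)$ is either a constant or a pure power $(w-c)^l$. Write $g(y)=\prod_i(y-\alpha_i)$. A direct expansion of the product (using compatibility and Lemma~\ref{subst}(i) to match roots of $G_B$ with roots of $g$ lying in $B$) yields the product formula
\[
G_B(z)=c\prod_{\alpha\in \Zer g\cap B}(z-\lc_B\alpha).
\]
The key claim is that when $g$ is irreducible, the multiset $\{\lc_B\alpha:\alpha\in\Zer g\cap B\}$ is either $\{0,\dots,0\}$ or of the form $\{\zeta^jc_0\}_{j=0}^{n(B)-1}$ with equal multiplicities, for some $c_0\neq 0$. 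To prove the claim, let $\alpha,\alpha'\in\Zer g\cap B$. Irreducibility of $g$ over $\bL$ gives $\tau\in\mathrm{Gal}(\overline{\bL}/\bL)$ with $\tau(\alpha)=\alpha'$; then $\alpha'\in\tau(B)\cap B$, and since $\tau(B)$ has the same height as $B$, two pseudo-balls of equal height that meet must coincide, so $\tau(B)=B$. The relation $\tau(\lambda_B)-\lambda_B\in\underline{x}^{h(B)}\bK[[\underline{x}^{1/\bN}]]$ combined with the fact that both $\lambda_B$ and $\tau(\lambda_B)$ contain only terms of order strictly below $h(B)$ forces $\tau(\lambda_B)=\lambda_B$, i.e.\ $\tau$ fixes $\bL(\lambda_B)$. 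Under such $\tau$ the leading coefficient transforms as $\lc_B(\tau\alpha)=\eta_\tau\lc_B(\alpha)$, where $\eta_\tau=\tau(\underline{x}^{h(B)})/\underline{x}^{h(B)}$ exhausts all $n(B)$-th roots of unity, yielding the claimed orbit structure. Feeding this back into the product formula gives $G_B(z)=az^k$ (the zero case, or the empty case $\Zer g\cap B=\emptyset$) or $G_B(z)=a(z^{n(B)}-c_0^{n(B)})^l$.

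The main obstacles I anticipate are the Galois-theoretic bookkeeping in the Puiseux ring: constructing the automorphism $\sigma$ with exactly the right properties, verifying that $\underline{x}^{q(g,B)}$ actually lies in $\bL(\lambda_B,\underline{x}^{h(B)})$ so that the character descends to $\bZ/n(B)$ (this is what makes $\eta=\zeta^k$), and rigorously justifying the product formula for $G_B(z)$ in the multi-variable setting where $B$ is not assumed to belong to $\tilde T(g)$. Once those technicalities are settled, the orbit analysis for irreducible $g$ is clean.
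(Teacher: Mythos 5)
Your proof is correct and follows essentially the same route as the paper: part (1) applies the generator of the cyclic Galois group of $\bL(\lambda_B)\hookrightarrow\bL(\lambda_B,\underline{x}^{h(B)})$ to the compatibility identity to obtain $G_B(\zeta z)=\zeta^k G_B(z)$, and part (2) combines the product formula \eqref{eq:F,q2} with Galois transitivity on $\Zer g$ to show that the leading coefficients of the roots of $g$ in $B$ form a single orbit under multiplication by $n(B)$th roots of unity. The extra bookkeeping you supply (the character argument for why the scalar is an $n(B)$th root of unity, and the verification that the stabilizer of $B$ fixes $\lambda_B$) merely makes explicit steps the paper leaves implicit.
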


\noindent \begin{proof}
Let $\bL$ be the field of quotients of $\bK[[\underline{x}]]$.
By \cite[Lemma 5.7]{Lipman} and  \cite[Remark 2.7]{GP} the algebraic extension
$\bL(\lambda_B(\underline{x}))\hookrightarrow \bL(\lambda_B(\underline{x}),\underline{x}^{h(B)})$ is cyclic. Hence the generator  $\varphi$ of the group $Gal(\bL(\lambda_B(\underline{x})\hookrightarrow \bL(\lambda_B(\underline{x}),\underline{x}^{h(B)})$ acts as follows:
$\varphi(\lambda_B(\underline{x}))=\lambda_B(\underline{x})$ and 
$\varphi(\underline{x}^{h(B)})=\omega \underline{x}^{h(B)},$ where $\omega$ is a primitive $n(B)$th root of the unity.  Applying $\varphi$ to~(\ref{eq:comp}) we get
\begin{equation}
\label{KP1}
g(\lambda_B(\underline{x})+z \omega x^{h(B)})=G_B(z)\omega^kx^{q(g,B)}+\cdots
\end{equation}
for some $0\leq k < n(B)$. Substituting $\omega z$ for $z$ in (\ref{eq:comp}) 
and comparing with (\ref{KP1}) we get $G_B(z)\omega^k=G_B(\omega z)$. 
Multiplying this equality by $(\omega z)^{n(B)-k}$ and putting $W(z):=z^{n(B)-k}G_B(z)$ we obtain 
$W(z)= W(\omega z).$ This implies that $W(z)=\overline W(z^{n(B)})$, for some 
$\overline W(z)\in \bK[z]$. 
We finish the proof putting $H(z^{n(B)})=z^{-n(B)}\overline W(z^{n(B)})$. This proves the first part of the lemma.

\medskip
\noindent Suppose now that $g$ is irreducible and quasi-ordinary. Let $\gamma=\lambda_B(\underline{x})+ cx^{h(B)}+\cdots\in B\cap \Zer g$.
Since the extension $ \bL(\lambda_B(\underline{x}))\hookrightarrow \bL(\lambda_B(\underline{x}),\underline{x}^{h(B)})\hookrightarrow \bL(\gamma)$ is Galois,  any other root of $g$ belonging to $B$ has the form 
$\lambda_B(\underline{x})+ \omega^{i} cx^{h(B)}+\cdots$, for some $0\leq i < n(B)$. Using the first part of the lemma and the equality \eqref {eq:F,q2} we complete the proof.\end{proof}

\section{Conjugate pseudo-balls}
\label{section-Conjugate-pseudo-balls}

\noindent In this section we define an equivalence relation between pseudo-balls called {\em conjugacy relation}. This will allow us to introduce,  in Section \ref{section-Eggers-tree}, the notion of the Eggers tree of a quasi-ordinary Weierstrass polynomial. 

\medskip
\noindent Let $\bL$ be the field of fractions of $\bK[[\underline{x}]]$ and $\bM$ be the field of fractions of $\bK[[\underline{x}^{1/\bN}]]$.

\begin{Lemma} 
\label{propp:1}
Let $\varphi$ be an $\bL$-automorphism of $\bM$. Then
\begin{enumerate}
\item For any ${\bf q}\in \bQ^d$ there exists a root $\omega$ of the unity such that $\varphi(\underline{x}^{\bf q})=\omega \cdot \underline{x}^{\bf q}$,
\item $\varphi(\bK[[\underline{x}^{1/\bN}]]) = \bK[[\underline{x}^{1/\bN}]]$,
\item If $u$ is a unit  of the ring $\in \bK[[\underline{x}^{1/\bN}]]$ 
        and ${\bf q}\in (\bQ_{\geq 0})^d$ then
        $\varphi(u\cdot \underline{x}^{\bf q})=\tilde u \cdot \underline{x}^{\bf q}$ 
        for some unit $\tilde u\in \bK[[\underline{x}^{1/\bN}]]$. 
\end{enumerate}
\end{Lemma}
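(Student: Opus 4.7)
The plan is to prove the three items in order, with the key structural tool being the characterization of $\bK[[\underline{x}^{1/\bN}]]$ as the integral closure of $\bK[[\underline{x}]]$ in $\bM$. That description, which follows from standard facts about normal local rings, yields (2) without computation, and then (1) and (3) reduce to short verifications involving Kummer-type extractions of roots.

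For part (1), I would write ${\bf q}={\bf p}/N$ with ${\bf p}\in\bZ^d$ and $N$ a positive integer, so that $\alpha:=\underline{x}^{\bf q}$ satisfies $\alpha^N=\underline{x}^{\bf p}\in\bL$. Since $\varphi$ fixes $\bL$ we obtain $\varphi(\alpha)^N=\alpha^N$, so $\varphi(\alpha)$ and $\alpha$ are two $N$-th roots of the same element of $\bM$. Because $\bK$ is algebraically closed of characteristic zero, any two such roots differ by an $N$-th root of unity, which gives $\varphi(\underline{x}^{\bf q})=\omega\,\underline{x}^{\bf q}$ with $\omega^N=1$.

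For part (2), I plan to show that $\bK[[\underline{x}^{1/\bN}]]$ equals the integral closure of $\bK[[\underline{x}]]$ in $\bM$. The inclusion $\subseteq$ is immediate since each $R_k:=\bK[[x_1^{1/k},\ldots,x_d^{1/k}]]$ is module-finite over $\bK[[\underline{x}]]$. For $\supseteq$, any $\alpha\in\bM$ lies in the fraction field of some $R_k$; since $R_k$ is a regular local ring, hence normal, and $\alpha$ is integral over $R_k$ by transitivity, one concludes $\alpha\in R_k$. Because an $\bL$-automorphism of $\bM$ preserves integrality over $\bK[[\underline{x}]]$, this characterization gives $\varphi(\bK[[\underline{x}^{1/\bN}]])\subseteq \bK[[\underline{x}^{1/\bN}]]$, and applying the same argument to $\varphi^{-1}$ yields equality. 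Part (3) then follows at once: if $u$ is a unit of $\bK[[\underline{x}^{1/\bN}]]$ then so is $u^{-1}$, and (2) places both $\varphi(u)$ and $\varphi(u)^{-1}$ in $\bK[[\underline{x}^{1/\bN}]]$, so $\varphi(u)$ is a unit; setting $\tilde u:=\omega\,\varphi(u)$ with the $\omega$ from (1) delivers the claim.

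The main obstacle I anticipate is the integral closure characterization underlying part (2), specifically verifying that an element of $\bM$ integral over $\bK[[\underline{x}]]$ must actually lie in one of the finite subrings $R_k$. Once this normality argument is secured, parts (1) and (3) drop out in a few lines.
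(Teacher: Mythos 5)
Your proof is correct, and for items (2) and (3) it takes a genuinely different route from the paper. The paper's argument for (1) is essentially yours (comparing $\varphi(x_i^{1/k})^k=x_i$ and concluding that the quotient is a root of unity, which must lie in $\bK$ since $z^N-1$ has all its roots there). For (2), however, the paper argues by the explicit decomposition: every element of $\bK[[\underline{x}^{1/\bN}]]$ is a finite sum $\sum_{\bf q} a_{\bf q}\underline{x}^{\bf q}$ with $a_{\bf q}\in\bK[[\underline{x}]]$ fixed by $\varphi$ and $0\leq q_i<1$, so item (1) immediately shows the image stays in the ring, and (3) follows because the constant term is visibly preserved. You instead identify $\bK[[\underline{x}^{1/\bN}]]$ with the integral closure of $\bK[[\underline{x}]]$ in $\bM$ (using module-finiteness of each $R_k$ for one inclusion and normality of the regular local rings $R_k$ for the other), which is an automorphism-invariant description, and then deduce (3) purely ring-theoretically from the fact that the restriction of $\varphi$ is a ring automorphism of $\bK[[\underline{x}^{1/\bN}]]$ and so sends units to units. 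Your approach is more structural and avoids computation, at the cost of importing normality of power series rings; the paper's is more elementary and makes the preservation of the constant term (hence of unit-ness) completely explicit. Both are complete; the only point you leave implicit, as does the paper, is that the roots of unity of $\bM$ lie in $\bK$, which is immediate since $z^N-1$ already has $N$ roots in $\bK$ and at most $N$ in the field $\bM$.
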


\noindent \begin{proof} 
\noindent Let  $k$ be a positive integer. Observe that
$x_i=\varphi(x_i)=\varphi\bigl((x_i^{1/k})^k\bigr)=\varphi(x_i^{1/k})^k$. 
Hence $\varphi(x_i^{1/k})=c\cdot x_i^{1/k}$ 
for some $c\in \bK\backslash\{0\}$ such that $c^k=1$. 
It follows that for any ${\bf q}\in\bQ^d$ there exists  $\omega\in \bK$  such that 
$\varphi(\underline{x}^{{\bf q}})=\omega\underline{x}^{\bf q}$ 
and $\omega^m=1$ for some positive integer $m$. 

\medskip 
\noindent
Every element of the ring $\bK[[\underline{x}^{1/\bN}]]$
can be represented as a finite sum 
$\sum_{\bf q} a_{\bf q} \underline {x}^{\bf q}$ 
where ${\bf q}=(q_1,\dots, q_d) \in(\bQ_{\geq 0})^d$ ($0\leq q_i<1$)
and $a_{\bf q }\in \bK[[\underline{x}]]$.
This together with~{\em 1}. proves items~{\em 2}.\ and~{\em 3}.\ of the lemma.
\end{proof}

\medskip

\noindent Let $B$, $B'$ be pseudo-balls. We say that $B$ and $B'$ are {\em conjugate} if 
there exists an $\bL$-automorphism $\varphi$ of   $\bM$ such that $B'=\varphi(B)$.  
The conjugacy of pseudo-balls is an equivalence relation. It follows from Lemma~\ref{propp:1} that conjugate pseudo-balls have the same height. 
Moreover two quasi-ordinary pseudo-balls $B$ and $B'$ of the same height are conjugate if any irreducible quasi-ordinary polynomial which has one of its roots in $B$ has another root in $B'$
(in this way conjugate bars were defined in \cite[Definition 6.1]{K-P}).
If $B'=\varphi(B)$ then $\lambda_{B'}=\varphi(\lambda_{B})$. The converse is also true;
if $h(B)=h(B')$ and there exists an $\bL$-automorphism $\varphi$ of $\bM$ such that $\lambda_{B'}=\varphi(\lambda_{B})$ then $B$ and $B'$ are conjugate.  \label{card[B]} It follows from the above 
that the number of pseudo-balls  conjugate with $B$ is equal to  the degree of the minimal polynomial of $\lambda_B$, which is the degree $N(B)$ of the field 
extension  $\bL \hookrightarrow \bL(\lambda_B(\underline{x}))$.

\begin{Lemma}
\label{LL:1}
Let   $B,B'$ be quasi-ordinary conjugate pseudo-balls.
If $p(y)\in \bK[[\underline{x}]][y]$ is a Weierstrass polynomial compatible with $B$  then 
\begin{enumerate}
\item $p(y)$ is compatible with $B'$. 
\item $q(p,B)=q(p,B')$.
\item The characteristic polynomials $P_{B'}(z)$ and $P_{B}(z)$ of $p(y)$ verify the equality
$P_{B'}(z)=\theta P_B(\omega z)$, for some roots of the unity $\theta$ and $\omega$.
\end{enumerate}
\end{Lemma}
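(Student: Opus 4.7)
The plan is to apply the $\bL$-automorphism $\varphi$ realizing the conjugacy $B' = \varphi(B)$ directly to the identity from Remark \ref{tt2} that expresses the compatibility of $p$ with $B$. Since $\varphi$ is $\bL$-linear and $p \in \bK[[\underline{x}]][y] \subset \bL[y]$, evaluation commutes with $\varphi$: one has $\varphi(p(\gamma)) = p(\varphi(\gamma))$ for every $\gamma \in \bM$. I would extend $\varphi$ to fractional power series in $\underline{x}$ with coefficients in $\bK[z]$ by setting $\varphi(z) = z$ and acting termwise; this extension is well-defined and order-preserving because, by Lemma \ref{propp:1}(1), $\varphi$ multiplies each monomial $\underline{x}^h$ by a root of unity.

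Next I would record how $\varphi$ acts on the ingredients of (\ref{eq:comp}). The discussion preceding the lemma gives $\varphi(\lambda_B) = \lambda_{B'}$; Lemma \ref{propp:1}(1) yields roots of unity $\omega_0$ and $\theta$ with $\varphi(\underline{x}^{h(B)}) = \omega_0 \underline{x}^{h(B)}$ and $\varphi(\underline{x}^{q(p,B)}) = \theta \underline{x}^{q(p,B)}$; and conjugate pseudo-balls have equal height, so $h(B) = h(B')$.

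Applying $\varphi$ to the identity
\[
  p(\lambda_B + z\underline{x}^{h(B)}) = P_B(z)\underline{x}^{q(p,B)} + \sum_{h > q(p,B)} a_h(z)\underline{x}^h
\]
from Remark \ref{tt2}, the right-hand side becomes $\theta P_B(z)\underline{x}^{q(p,B)}$ plus a tail whose individual monomials are merely rescaled by roots of unity, hence whose $\underline{x}$-orders are still strictly greater than $q(p,B)$; the left-hand side becomes $p(\lambda_{B'} + (\omega_0 z)\underline{x}^{h(B')})$. The substitution $z \mapsto \omega_0^{-1} z$ (which does not disturb orders in $\underline{x}$) then rewrites this identity as
\[
  p(\lambda_{B'} + z\underline{x}^{h(B')}) = \theta P_B(\omega_0^{-1}z)\underline{x}^{q(p,B)} + \cdots,
\]
and all three conclusions can be read off at once: $p$ is compatible with $B'$, one has $q(p,B') = q(p,B)$, and $P_{B'}(z) = \theta P_B(\omega z)$ with $\omega = \omega_0^{-1}$.

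The only delicate point is confirming that $\varphi$ extends coherently to the ring of fractional power series in $\underline{x}$ with polynomial coefficients in $z$ and preserves the $\underline{x}$-order filtration; this is the content of Lemma \ref{propp:1}(1) together with the convention $\varphi(z) = z$. Everything else is a one-line symbolic manipulation, so I do not expect any real obstacle beyond keeping track of the two roots of unity $\omega_0$ and $\theta$.
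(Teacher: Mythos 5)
Your proof is correct and follows essentially the same route as the paper's: apply the $\bL$-automorphism $\varphi$ with $\varphi(B)=B'$ to the compatibility identity \eqref{eq:comp}, use Lemma \ref{propp:1} to track the roots of unity multiplying $\underline{x}^{h(B)}$ and $\underline{x}^{q(p,B)}$, and rescale $z$ to read off all three claims. Your extra care about extending $\varphi$ termwise with $\varphi(z)=z$ and about the tail terms keeping order $>q(p,B)$ only makes explicit what the paper leaves implicit.
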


\noindent \begin{proof} 
Let $\bL$ be the field of quotients of $\bK[[\underline{x}]]$ and let
$\varphi$ be a $\bL$-automorphism of $\bM$ such that $\varphi(B)=B'$. Then
$\varphi(\lambda_B)=\lambda_{B'}$. By Lemma \ref{propp:1} we have $\varphi(\underline{x}^{h(B)})=\omega^{-1} \underline{x}^{h(B)}$ and $\varphi(\underline{x}^{q(p,B)})=\theta \underline{x}^{q(p,B)}$ for some roots of the unity 
$\theta$ and $\omega$. 
Applying $\varphi$ to~(\ref{eq:comp}), with $g$ replaced by $p$, we get
\[
p(\lambda_{B'}+z \omega^{-1} \underline{x}^{h(B)})=P_B(z)\theta \underline{x}^{q(p,B)}+\cdots
\]

\medskip
\noindent This gives  $q(p,B)=q(p,B')$ and $P_{B'}(\omega^{-1} z)=\theta P_B(z)$. 
\end{proof}

\section{Kuo-Lu Lemma for higher derivatives}
\label{section-Kuo-Lu Lemma}
\noindent 
Let $f(y)\in \bK[[\underline{x}]][y]$ be a quasi-ordinary Weierstrass polynomial. 
We begin with combinatorial results concerning the Kuo-Lu tree $T(f)$. Remember that we identify any bar of $T(f)$ with the corresponding quasi-ordinary pseudo-ball.
At the end of the section we apply these results to Newton-Puiseux roots of higher derivatives of $f(y)$.

\medskip
\noindent Take an integer $k$ such that $1\leq k\leq \deg f$.  
With every bar $B$ of $T(f)$ we a\-sso\-ciate the numbers:
\begin{itemize}
\item $m(B)$ which is the number of roots of $f(y)$ which belong to $B$, 
\item $n_k(B)=\max\{m(B)-k,0\}$, and 
\item $t_k(B)=n_k(B)-\sum_{B\perp B'}n_k(B')$. 
\end{itemize} 

\begin{Remark}
\label{t=0}
\noindent For  $1\leq k < m(B)$ we have
$n_k(B)>0$, $t_k(B)>0$ and for $m(B)\leq k \leq \deg f$ we have
$n_k(B) =t_k(B)=0$.
\end{Remark}

\medskip
\noindent  We denote by $T_k(f)$ the  sub-tree of $T(f)$ consisting of the bars $B\in T(f)$ such that $m(B)\geq k$.

\medskip
\noindent Let $F\in\bK[z]$ be a non constant polynomial. 
Let $F^{(k)}$ denotes the $k$th derivative of $F$. 

\begin{Definition}
\noindent We will say that $F$ is $k$-{\em regular} 
if one of the following conditions holds:
\begin{enumerate}
\item $F^{(k)}$ is zero or
\item $F^{(k)}$ is nonzero and there is not a root of $F$ of multiplicity $\leq k$ which is a root of $F^{(k)}$. 
\end{enumerate}
\end{Definition}

\noindent 
Recall that common roots of a polynomial $F$ and its first derivative are multiple roots of $F$. 
Hence any polynomial is 1-regular. 

\medskip
\label{page:F irr}
\noindent In general it is not easy to verify the $k$-regular property. In this papers polynomials of the form
\begin{equation}
\label{eq:F irr}
F(z)=(z^{n}-c)^{l}\in K[z],
\end{equation}
play an important role. Their $k$-regularity, for any $k$, is a  consequence of Lemma \ref{AL}.

\begin{Remark}
\label{r: +-}
Let $F(z)=\mbox{const} \prod_{i=1}^r (z-z_i)^{m_i}$, where $z_i$ are pairwise different,  
$m_i\geq k$ for $1\leq i \leq s$ and $m_i < k$ for $s < i \leq r$.
After differentiating, the multiplicity of any root drops by one. Hence putting 
${\cal F}^{\oplus}(z)=\prod_{i=1}^s (z-z_i)^{m_i-k}$ we obtain the decomposition 
\begin{equation}\label{k-derivative}
F^{(k)}(z)={\cal F}^{\oplus}(z){\cal F}^{\ominus}(z)
\end{equation}
into two coprime polynomials. A polynomial  $F$ is $k$-regular if and only if $F$ and ${\cal F}^{\ominus}$ 
do not have common roots. 
\end{Remark}

\medskip

\begin{Definition}
Let $f\in \bK[[\underline x]][y]$ be a quasi-ordinary Weierstrass polynomial. We say that $f$ is  Kuo-Lu 
$k$-regular if for every $B\in T(f)$ of finite height the polynomial $F_B(z)$ is  $k$-regular. 
\end{Definition}

\medskip
\noindent We finish this subsection with some results for Weierstrass polynomials  with coefficients in the ring of the formal power series in one variable.

\medskip
\noindent Let $f(y)\in\bK[[x]][y]$ be a square-free Weierstrass polynomial. Fix $B\in T_k(f)$ and assume that $ \{B_1,\dots,B_s\}$ 
is the set of post-bars of $B$ in $T_k(f)$. Denote $B^{\circ}=B\setminus(B_1\cup\dots\cup B_s)$.

\medskip

\begin{Theorem}\label{higher-Kuo-Lu}
Let $f(y)\in\bK[[x]][y]$ be a square-free Weierstrass polynomial 
over the ring of formal power series in one variable. 
Let $f(y)= \prod_{i=1}^n(y-\alpha_i)$ and 
$f^{(k)}(y)=\prod_{j=1}^{n-k}(y-\beta_j)$ 
be the Newton-Puiseux factorizations of $f$ and $f^{(k)}$. Then

\begin{itemize}

\item[(i)]  for every $B\in T_k(f)$ the set $\{j: \beta_j\in B\}$ has $n_k(B)$ elements,

\item[(ii)] for every $B\in T_k(f)$ the set $\{j: \beta_j\in B^{\circ}\}$ has $t_k(B)$ elements,

\item[(iii)] for every $\beta_j$ there exists a unique $B\in T_k(f)$ such that $\beta_j\in B^{\circ}$.

\item[(iv)]  Let $B\in T_k(f)$. 
If the polynomial $F_B(z)$ is  $k$-regular 
then for every $\alpha_i\in B$, $\beta_j\in B^{\circ}$
one has $O(\alpha_i,\beta_j)=h(B)$. 
Otherwise there exist $\alpha_i\in B$, $\beta_j\in B^{\circ}$
such that $O(\alpha_i,\beta_j)>h(B)$.
\end{itemize}      
\end{Theorem}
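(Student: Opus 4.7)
The plan is to reduce all four statements to a Newton polygon analysis of the polynomial $\tilde f(x,z):=f^{(k)}(\lambda_B(x)+zx^{h(B)})$, viewed as a polynomial in $z$ with coefficients in $\bK[[x^{1/\bN}]]$. Since $B\in T_k(f)$ forces $m(B)=\deg F_B\geq k$, Lemma \ref{derivatives} gives compatibility of $f^{(k)}$ with $B$: writing $\tilde f=\sum_{j=0}^{n-k}A_j(x)z^j$, we have $A_j(x)=g_jx^{q(f^{(k)},B)}+\cdots$ for $0\leq j\leq n_k(B)$, where $g_j$ is the coefficient of $z^j$ in $G_B(z)=\frac{(n-k)!}{n!}F_B^{(k)}(z)$ of degree $n_k(B)$; from $f^{(k)}$ being monic in $y$ of degree $n-k$ we also read off $A_{n-k}(x)=x^{(n-k)h(B)}$. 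In the $(j,\mu_j)$-plane with $\mu_j=\ord_x A_j$, the Newton polygon of $\tilde f$ therefore has its bottom at height $q(f^{(k)},B)$ reaching up to $j=n_k(B)$; for $j>n_k(B)$ the degree bound on $G_B$ gives $\mu_j>q(f^{(k)},B)$, and the polygon climbs to $(n-k,(n-k)h(B))$. Reading Puiseux valuations off this polygon supplies exactly $n-m(B)$ $z$-roots with $\ord_x z<0$ (the $\beta_j\notin B$) and $n_k(B)$ $z$-roots with $\ord_x z\geq 0$ (the $\beta_j\in B$), proving (i).

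Statement (ii) is then a bookkeeping exercise: $B$ is the disjoint union of $B^\circ$ and the post-bars $B'\perp B$ with $B'\in T_k(f)$, so applying (i) to $B$ and to each such $B'$ gives $\sharp\{j:\beta_j\in B^\circ\}=n_k(B)-\sum_{B\perp B'}n_k(B')=t_k(B)$, since post-bars with $m(B')<k$ contribute zero and are absorbed into $B^\circ$ by definition. For (iii), the root of $T_k(f)$ coincides with the root of $T(f)$ (it has $m=n\geq k$), so by (i) every $\beta_j$ lies in it; descending $T_k(f)$ as long as $\beta_j$ still sits in some post-bar of $T_k(f)$ stops at a unique $B$ with $\beta_j\in B^\circ$, the uniqueness being immediate from the tree structure. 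For (iv), I would combine the decomposition $G_B=\mathrm{const}\cdot{\cal F}^\oplus{\cal F}^\ominus$ from Remark \ref{r: +-} with the equivalence $O(\alpha_i,\beta_j)>h(B)\Leftrightarrow\lc_B\alpha_i=\lc_B\beta_j$: the question is whether some $\beta_j\in B^\circ$ has $\lc_B\beta_j$ equal to a root of $F_B$. Each support point $c_{B'}$ of a post-bar $B'\in T_k(f)$ contributes multiplicity $n_k(B')$ to $G_B$ through ${\cal F}^\oplus$, and by (i) these are precisely the $\beta_j$'s lying in $B'$, not in $B^\circ$; hence any extra coincidence must come from a common root of $F_B$ and ${\cal F}^\ominus$, which is exactly the failure of $k$-regularity, and the dichotomy of (iv) follows at once.

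The main obstacle I expect is concentrated in step (i): identifying the correct shape of the Newton polygon of $\tilde f$ and, in particular, verifying the strict inequality $\mu_j>q(f^{(k)},B)$ for $j>n_k(B)$ (so that the right flank of the polygon is non-degenerate) and the dominance of the vertex $(n-k,(n-k)h(B))$. The comparison $q(f^{(k)},B)=q(f,B)-kh(B)<(n-k)h(B)$ needed whenever $m(B)<n$ follows from Lemma \ref{subst} via the formula $q(f,B)=\sum_i\min(\ord(\lambda_B-\alpha_i),h(B))$, noting that $\ord(\lambda_B-\alpha_i)<h(B)$ precisely for the $\alpha_i\notin B$.
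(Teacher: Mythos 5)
Your proposal is correct, and parts (ii)--(iv) follow essentially the same lines as the paper: (ii) by the disjoint-union bookkeeping, (iii) from the tree structure, and (iv) by matching the roots of ${\cal F}^{\ominus}_B$ with the leading coefficients $\lc_B\beta_j$ for $\beta_j\in B^{\circ}$ and translating $O(\alpha_i,\beta_j)>h(B)$ into a common root of $F_B$ and ${\cal F}^{\ominus}_B$. Where you genuinely diverge is in (i), which is the load-bearing step. The paper gets it in two lines by applying Lemma~\ref{subst}(2) to $f^{(k)}$ itself --- legitimate because $d=1$ and $f^{(k)}=\prod_j(y-\beta_j)$ splits over $\bK[[x^{1/\bN}]]$ --- so that the $B$-characteristic polynomial of $f^{(k)}$ is simultaneously $\mathrm{const}\prod_{j:\beta_j\in B}(z-\lc_B\beta_j)$ (root side) and $\mathrm{const}\,F_B^{(k)}(z)$ of degree $n_k(B)$ (Lemma~\ref{derivatives}); comparing degrees gives the count. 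You instead work on the coefficient side, reading the number of $z$-roots of $f^{(k)}(\lambda_B+zx^{h(B)})$ with $\ord_x z\geq 0$ off its Newton polygon: the minimum height $q(f^{(k)},B)$ is attained at $j=n_k(B)$ and at no $j>n_k(B)$ (since $\deg G_B=n_k(B)$), so the non-ascending part of the polygon has horizontal length exactly $n_k(B)$. This is a valid, self-contained rederivation of what Lemma~\ref{subst}(2) already encapsulates; it costs you the explicit verification of the polygon's shape (which you correctly isolate as the delicate point, including the strict inequality on the right flank and the position of the vertex $(n-k,(n-k)h(B))$), and buys transparency about where the count comes from.

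Two minor caveats, neither fatal. First, for $k=1$ the leaves $B=\{\alpha_i\}$ of infinite height belong to $T_1(f)$, and your compatibility/Newton-polygon apparatus does not apply to them; the claim there is just that $f'$ and $f$ share no root, which holds by square-freeness, but it needs its own sentence (the paper treats this case separately). Second, some coefficients $g_j$ of $G_B$ with $j<n_k(B)$ may vanish, so the ``bottom'' of the polygon need not be a single horizontal segment ending at $j=n_k(B)$; your count only needs that the minimum is not attained to the right of $n_k(B)$, which is what you actually use, but the phrasing should not suggest more.
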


\noindent \begin{proof}
\smallskip 
\noindent \textit{Proof of (i).}
Suppose first, that $B\in T_k(B)$ has finite height. 
Then by Lemma \ref{subst} $F_B(z) = \mbox{const} \prod_{i:\alpha_i\in B} (z-\lc_B(\alpha_i))$.
By equality $(4)$ of this lemma and Lemma \ref{derivatives} we get
$F_B^{(k)}(z) = \mbox{const} \prod_{j:\beta_j\in B} (z-\lc_B(\beta_j))$. 
Hence, the set $\{j:\beta_j\in B\}$ has $\deg F_B-k = n_k(B)$ elements. 

\medskip
\noindent If the height of $B$ is infinite then $B=\{\alpha_i \}$ 
for exactly one Newton-Puiseux root $\alpha_i$ of $f(y)$. 
Hence  for $k=1$ $n_1(B)=0$ and $f'(y)$ does not have roots in $B$ 
and for $k>1$ $B\notin T_k(f)$.

\smallskip 
\noindent \textit{Proof of (ii).} 
It is enough to count the elements of the set $\{j:\beta_j\in B^{\circ}\}$ using~\textit{(i)}.

\smallskip 
\noindent \textit{Proof of (iii).} 
Let $B_0$ be the root of the tree $T(f)$.  By~\textit{(i)},
$\{\beta_1,\dots, \beta_{n-k}\}$ is a subset of $B_0$.  
It is clear that the sets $B^{\circ}$ for $B\in T_k(f)$ are pairwise disjoint 
and their union is equal to $B_0$. This proves~\textit{(iii)}.

\smallskip 
\noindent \textit{Proof of (iv).} 
Assume that $B_1$, \dots, $B_r$ are the post-bars of $B$ 
supported at points $z_1$, \dots, $z_r$  respectively,
and that $m(B_i)\geq k$ for $i\in \{1,\dots s\}$, 
 $m(B_i)< k$ for $i\in \{s+1,\dots r\}$. Then by Lemma \ref{subst} $F_B(z)=\prod_{i=1}^r (z-z_i)^{m(B_i)}$. 

\noindent After Remark \ref{r: +-} the $k$th derivative of $F_B(z)$ is the product of two coprime polynomials 
$$F_B^{(k)}(z)={\cal F}^{\oplus}_B(z){\cal F}^{\ominus}_B(z),$$

\noindent where ${\cal F}^{\oplus}_B(z):=\prod_{i=1}^s (z-z_i)^{n_k(B_i)}$.

\medskip

\noindent We get $\deg {\cal F}^{\ominus}_B(z)=t_k(B)$. Hence it follows from~\emph{(ii)} 
and~\emph{(iii)} that all roots of ${\cal F}^{\ominus}_B(z)$ 
correspond to those Newton-Puiseux roots of $f^{(k)}(y)$ that belong to $B^{\circ}$.
For $\alpha_i\in B$, $\beta_j\in B^{\circ}$ one has $O(\alpha_i,\beta_j)>h(B)$ 
if and only if $\lc_B(\alpha_i)=\lc_B(\beta_j)$, which means that the polynomials $F_B(z)$ and 
${\cal F}^{\ominus}_B(z)$ 
have a common root. Since $F_B(z)$ is $k$-regular if and only if ${\cal F}^{\ominus}_B(z)$ and $F_B(z)$ do not have common roots we get~\textit{(iv)}.
\end{proof}

\begin{Remark}\label{R:1}
Let $f(y)\in\bK[[x]][y]$ be a square-free Weierstrass polynomial 
over the ring of formal power series in one variable. 
Let $B\in T_k(f)$, $\beta_i \in B\cap \Zer f^{(k)}$ and put $c=\lc_{B}\beta_i$. 
Then ${\cal F}^{\oplus}_B(c)\neq 0$ if and only if $\beta_i \in B^{\circ}$. 
If ${\cal F}^{\oplus}_B(c)=0$ then there exists a sequence of postbars $B\perp_c B_1\perp \cdots \perp B_l$  such that 
$\beta_i \in B_l^{\circ}$ and $ B_l\in T_k(f)$.
\end{Remark}

\medskip

\noindent For quasi-ordinary Weierstrass polynomials which are Kuo-Lu $k$-regular, the counterpart of \cite[Lemma 3.3]{K-L} is true:

\begin{Corollary}\label{Kuo-Lu}
Let $f(y)\in\bK[[x]][y]$ be a square-free Weierstrass polynomial 
over the ring of formal power series in one variable. 
Assume that $f$ is Kuo-Lu $k$-regular. Then under assumptions and notations of Theorem~\ref{higher-Kuo-Lu}, for  every $\alpha_i\in \Zer f$, $\beta_s\in \Zer f^{(k)}$ there exists $\alpha_j\in \Zer f$ such that 
$O(\alpha_i,\beta_s)=O(\alpha_i,\alpha_j)$.
\end{Corollary}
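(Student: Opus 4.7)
My plan is to consider the unique bar $B\in T_k(f)$ containing $\beta_s$ in its stratum $B^{\circ}$, whose existence and uniqueness is guaranteed by Theorem~\ref{higher-Kuo-Lu}\textit{(iii)}, and then split into two cases depending on whether $\alpha_i\in B$ or not. Since $\beta_s\in B^{\circ}$ forces $t_k(B)\geq 1$, Remark~\ref{t=0} gives $k<m(B)$, so $B$ has at least $k+1\geq 2$ elements and in particular has finite height, which allows us to invoke Theorem~\ref{higher-Kuo-Lu}\textit{(iv)}.

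In the first case, $\alpha_i\in B$. The hypothesis of Kuo-Lu $k$-regularity ensures that $F_B$ is $k$-regular, so Theorem~\ref{higher-Kuo-Lu}\textit{(iv)} immediately yields $O(\alpha_i,\beta_s)=h(B)$. To produce the required $\alpha_j$, I would observe that by the construction of $T(f)$ the postbars of $B$ partition $B$ according to the relation $\equiv$ mod $h(B)^+$, and since $h(B)$ is by definition the minimal pairwise contact inside $B$, the bar $B$ must possess at least two postbars. Picking any $\alpha_j\in B\cap\Zer f$ in a postbar different from the one containing $\alpha_i$ then gives $O(\alpha_i,\alpha_j)=h(B)=O(\alpha_i,\beta_s)$.

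In the second case, $\alpha_i\notin B$. Let $\tilde B\in T(f)$ denote the youngest common ancestor of $\alpha_i$ and $B$, and let $B^{\star},B^{\star\star}$ be the two distinct postbars of $\tilde B$ with $\alpha_i\in B^{\star}$ and $B\subseteq B^{\star\star}$. I pick any $\alpha_j\in B\cap\Zer f$. Using the description of $\lambda_{B^{\star}}$ and $\lambda_{B^{\star\star}}$ as truncations of elements of their pseudo-balls, together with the fact that different postbars of $\tilde B$ are supported at different points, the difference $\lambda_{B^{\star}}-\lambda_{B^{\star\star}}$ has leading term $c\,\underline{x}^{h(\tilde B)}$ with $c\neq 0$, while the residuals $\alpha_i-\lambda_{B^{\star}}$, $\alpha_j-\lambda_{B^{\star\star}}$ and $\beta_s-\lambda_{B^{\star\star}}$ all lie in $\underline{x}^{h(\tilde B)}\bK[[\underline{x}^{1/\bN}]]$ with exponent strictly exceeding $h(\tilde B)$ in the partial order on $\bQ^d_{\geq 0}$. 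Decomposing $\alpha_i-\alpha_j$ and $\alpha_i-\beta_s$ as telescoping sums through $\lambda_{B^{\star}}$ and $\lambda_{B^{\star\star}}$ then shows that both share the leading term $c\,\underline{x}^{h(\tilde B)}$, yielding $O(\alpha_i,\alpha_j)=h(\tilde B)=O(\alpha_i,\beta_s)$.

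The main obstacle will be the bookkeeping in Case 2, namely verifying that postbar heights strictly increase along the tree in the partial order on $\bQ^d_{\geq 0}$ (so that the residuals really live in strictly higher order) and that the coefficient $c$ is nonzero because $B^{\star}$ and $B^{\star\star}$ are supported at distinct points on $\tilde B$. Everything else reduces to a direct application of Theorem~\ref{higher-Kuo-Lu}\textit{(iii)}--\textit{(iv)}, the Kuo-Lu $k$-regularity hypothesis, and the tree structure of $T(f)$.
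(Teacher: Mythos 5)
Your proposal is correct and follows the route the paper intends: the corollary is stated as a direct consequence of Theorem~\ref{higher-Kuo-Lu}, and you carry this out by locating the unique $B\in T_k(f)$ with $\beta_s\in B^{\circ}$ via part \textit{(iii)}, using $k$-regularity and part \textit{(iv)} when $\alpha_i\in B$, and otherwise reducing to the ultrametric fact that $O(\alpha_i,\cdot)$ is constant (equal to $h(\tilde B)$ for the last common bar $\tilde B$) on the pseudo-ball $B$ containing both $\alpha_j$ and $\beta_s$. The only cosmetic point is that in Case~2 the bar $B^{\star}$ containing $\alpha_i$ may be a leaf, where $\lambda_{B^{\star}}$ is not defined, but the leading-term comparison at order $h(\tilde B)$ goes through unchanged.
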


\section{Newton polytopes of resultants}
\label{section-Newton-polytopes-resultants}

\noindent In this section we give a formula for the Newton polytope of the resultant 
$\Res_y(f^{(k)}(y),p(y)-T)$, where $f(y)$ is a Kuo-Lu $k$-regular quasi-ordinary 
Weierstrass polynomial, $p(y)$ is a factor of $f(y)$ and $T$ is a new variable.  We prove that for irreducible $p(y)$,  the Newton polytope of the resultant is polygonal.

\subsection{Monomial substitutions}

\noindent Let $g(\underline x,y)\in \bK[[\underline x,y]]$. For any monomial substitution 
$x_1=u^{r_1}$,\dots, $x_d=u^{r_d}$, where $r_i$ are positive integers, we put
\begin{equation}
\label{bar}
\bar g^{[{\bf r}]}(u,y):=g(u^{r_1},\ldots,u^{r_d},y).
\end{equation}

\noindent We will write simply $\bar{g}(u,y)$  when no confusion can arise.

\medskip

\noindent Observe that for $g=\underline{x}^{\bf s}$  we get $\bar g^{[{\bf r}]}=u^{\langle \bf{r},\bf{s}\rangle}$, where $\langle \cdot,\cdot \rangle$ denotes the scalar product.

\begin{Lemma}\label{L:2}
Let $f(y)\in \bK[[\underline x]][y]$ be a quasi-ordinary Weierstrass polynomial. There is  a one-to-one correspondence between the bars of $T(f)$ and the bars of $T(\bar f^{[{\bf r}]})$. 
If $B$ and $\bar B$ are the corresponding bars of $T(f)$ and $T(\bar f^{[{\bf r}]})$ respectively then 
\begin{enumerate}
\item $h(\bar B) = \scalar{{\bf r}}{h(B)}$ and  $t_k(\bar B)=t_k(B)$.
\item For any factor $g$ of $f$, the $B$-characteristic polynomial of $g$ and the $\bar B$-characteristic polynomial of $\bar g^{[{\bf r}]}$ are equal and $q(\bar g^{[{\bf r}]},\bar B) = \scalar{{\bf r}}{q(g,B)}$.

\end{enumerate}
\end{Lemma}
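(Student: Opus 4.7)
The plan is to exploit the factorization $f(y)=\prod_{i=1}^n(y-\alpha_i)$ over $\bK[[\underline{x}^{1/\bN}]]$ and trace how each Newton-Puiseux root and each pairwise contact is transformed by the monomial substitution. Writing $\bar\alpha_i(u):=\alpha_i(u^{r_1},\ldots,u^{r_d})$, one has $\bar f^{[{\bf r}]}(u,y)=\prod_{i=1}^n(y-\bar\alpha_i(u))$. Applying the substitution to \eqref{contact} yields $\bar\alpha_i-\bar\alpha_j=u^{\scalar{{\bf r}}{{\bf q}_{ij}}}\bar v_{ij}$ with $\bar v_{ij}(0)=v_{ij}(0)\neq0$, hence $O(\bar\alpha_i,\bar\alpha_j)=\scalar{{\bf r}}{O(\alpha_i,\alpha_j)}$. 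Since ${\bf r}$ has positive entries, the functional $\scalar{{\bf r}}{\cdot}$ is strictly monotone for the partial order on $\bQ_{\geq0}^d$: ${\bf p}<{\bf q}$ implies $\scalar{{\bf r}}{{\bf p}}<\scalar{{\bf r}}{{\bf q}}$. In particular, $O(\alpha_i,\alpha_j)>{\bf h}$ in $\bQ_{\geq0}^d$ if and only if $\scalar{{\bf r}}{O(\alpha_i,\alpha_j)}>\scalar{{\bf r}}{{\bf h}}$, so the equivalence relations $\equiv\,{\bf h}^+$ that build $T(f)$ correspond exactly to $\equiv\,\scalar{{\bf r}}{{\bf h}}^+$ that build $T(\bar f^{[{\bf r}]})$.

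Recursing through the Kuo-Lu construction, this gives the bijection $B\mapsto\bar B:=\{\bar\alpha_i:\alpha_i\in B\}$, which preserves the postbar relation $\perp$ and satisfies $h(\bar B)=\scalar{{\bf r}}{h(B)}$ (read off as the minimum of $O(\bar\alpha_i,\bar\alpha_j)$ over pairs in $\bar B$). Since the bijection preserves root membership in bars, $m(\bar B)=m(B)$, and thus $n_k(\bar B)=n_k(B)$ and $t_k(\bar B)=t_k(B)$ directly from the definitions. This proves part (1). For part (2), I would apply the substitution $\overline{\,\cdot\,}^{[{\bf r}]}$ to the compatibility identity \eqref{eq:comp}: from $g(\lambda_B(\underline x)+z\underline x^{h(B)})=G_B(z)\underline x^{q(g,B)}+\cdots$ one obtains $\bar g^{[{\bf r}]}\bigl(\bar\lambda_B^{[{\bf r}]}(u)+z u^{h(\bar B)}\bigr)=G_B(z)u^{\scalar{{\bf r}}{q(g,B)}}+\cdots$. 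By Remark~\ref{tt2}, the omitted terms have exponents ${\bf h}>q(g,B)$ in the partial order, so their $u$-orders $\scalar{{\bf r}}{{\bf h}}$ strictly exceed $\scalar{{\bf r}}{q(g,B)}$. To conclude $G_{\bar B}(z)=G_B(z)$ and $q(\bar g^{[{\bf r}]},\bar B)=\scalar{{\bf r}}{q(g,B)}$, one needs $\bar\lambda_B^{[{\bf r}]}(u)\equiv\lambda_{\bar B}(u)\pmod{u^{h(\bar B)}\bK[[u^{1/\bN}]]}$.

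The main obstacle is this last identification, because a term $c\,\underline x^{{\bf s}}$ of $\lambda_B$ with ${\bf s}\not\geq h(B)$ could a priori satisfy $\scalar{{\bf r}}{{\bf s}}\geq h(\bar B)$ and perturb the coefficient of $u^{h(\bar B)}$ in $\bar\lambda_B^{[{\bf r}]}$. The cleanest way to bypass this pitfall is to use the product formulas of Lemma~\ref{subst}(2), which apply to any factor $g$ of the quasi-ordinary polynomial $f$ since $\Zer g\subset\Zer f\subset\bK[[\underline x^{1/\bN}]]$ and the proof there relies only on computing initial terms of $\gamma-\alpha_i$ for $\alpha_i$ a root of $f$. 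Write $G_B(z)=\mathrm{const}\prod_{\alpha_i\in B\cap\Zer g}(z-\lc_B\alpha_i)$ and likewise $G_{\bar B}(z)=\mathrm{const}\prod_{\alpha_i\in B\cap\Zer g}(z-\lc_{\bar B}\bar\alpha_i)$ over the same index set, and then check directly that $\lc_{\bar B}\bar\alpha_i=\lc_B\alpha_i$ by comparing the coefficient of $\underline x^{h(B)}$ in $\alpha_i-\lambda_B$ with the coefficient of $u^{h(\bar B)}$ in $\bar\alpha_i-\lambda_{\bar B}$ (the latter being precisely the substitution of the former). The equality $q(\bar g^{[{\bf r}]},\bar B)=\scalar{{\bf r}}{q(g,B)}$ follows in parallel from \eqref{eq:F,q3} together with the linear transformation law $O(\lambda_B,\alpha_i)\mapsto\scalar{{\bf r}}{O(\lambda_B,\alpha_i)}$ for contacts.
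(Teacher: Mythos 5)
Your construction of the bijection of bars and your proof of part~(1) are correct and are essentially the paper's own argument (the paper is just terser about the order-preservation). You have also put your finger on the one genuinely delicate point of part~(2): whether $\overline{\lambda_B}^{[{\bf r}]}$ may be identified with $\lambda_{\bar B}$ modulo $u^{h(\bar B)}\bK[[u^{1/\bN}]]$. The paper's proof passes over this silently, writing $\lambda_{\bar B}(u)$ for what the substitution literally produces, namely $\overline{\lambda_B}^{[{\bf r}]}(u)$.

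However, your proposed repair does not close the gap. The assertion that $\lc_{\bar B}\bar\alpha_i=\lc_B\alpha_i$ because ``the coefficient of $u^{h(\bar B)}$ in $\bar\alpha_i-\lambda_{\bar B}$ is precisely the substitution of the coefficient of $\underline x^{h(B)}$ in $\alpha_i-\lambda_B$'' is exactly the identification you set out to avoid, restated root by root, and it is false in general. Take $f=(y-x_1)^2-x_1x_2$, which is quasi-ordinary with roots $\alpha,\beta=x_1\pm x_1^{1/2}x_2^{1/2}$; the unique bar $B$ of finite height has $h(B)=(\tfrac12,\tfrac12)$ and $\lambda_B=x_1$ (the exponent $(1,0)$ is not $\geq(\tfrac12,\tfrac12)$, so this term is \emph{not} truncated), whence $\lc_B\alpha=1$, $\lc_B\beta=-1$ and $F_B(z)=z^2-1$. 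For ${\bf r}=(1,1)$ one gets $\bar f^{[{\bf r}]}=y(y-2u)$, $\bar\alpha=2u$, $\bar\beta=0$, $h(\bar B)=1$, $\lambda_{\bar B}=0$, so $\lc_{\bar B}\bar\alpha=2$, $\lc_{\bar B}\bar\beta=0$ and $F_{\bar B}(z)=z(z-2)\neq F_B(z)$. The culprit is the term $x_1$ of $\lambda_B$, whose image $u$ lands exactly at level $h(\bar B)$. What survives in general is that $G_{\bar B}(z)=G_B(z-c_0)$ up to a nonzero constant, where $c_0$ is the coefficient of $u^{h(\bar B)}$ in $\overline{\lambda_B}^{[{\bf r}]}$ (it depends on $B$ and ${\bf r}$ but not on the factor $g$), while the equalities $q(\bar g^{[{\bf r}]},\bar B)=\scalar{{\bf r}}{q(g,B)}$ and $t_k(\bar B)=t_k(B)$ are unaffected, being invariant under re-centering the pseudo-ball. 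So you must either prove only this shift-invariant statement, or restrict to those ${\bf r}$ for which no exponent ${\bf s}$ of any $\lambda_B$ with ${\bf s}\not\geq h(B)$ satisfies $\scalar{{\bf r}}{{\bf s}}=\scalar{{\bf r}}{h(B)}$ (all ${\bf r}$ off countably many hyperplanes, which is all that the later application through Lemma~\ref{R2} actually needs). As written, the last step of your argument --- and with it the literal equality of characteristic polynomials claimed in part~(2) --- fails.
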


\noindent \begin{proof}
Set $u^{\bf r}=(u^{r_1},\dots,u^{r_d})$. 
If  $\Zer f=\{\alpha_i(\underline{x})\}_{i=1}^n$ then $\Zer \bar f^{[{\bf r}]}=\{\alpha_i(u^{\bf r})\}_{i=1}^n$
and $O(\alpha_i(u^{\bf r}),\alpha_j(u^{\bf r}))=\scalar{{\bf r}}
{O(\alpha_i(\underline{x}),\alpha_j(\underline{x}))}$ for $i\neq j$.

\noindent Hence every bar $B=\{\alpha_{i_j}(\underline{x})\}_{j=1}^k$ of $T(f)$ yields the bar 
$\bar B=\{\alpha_{i_j}(u^{{\bf r}})\}_{j=1}^k$ of $T(\bar f^{[{\bf r}]})$ of height 
$\scalar{{\bf r}}{h(B)}$. 

\medskip

\noindent Substituting $u^{r_i}$ for $x_i$ in the equation (\ref{eq:comp})  appearing in  Definition \ref{compatible},  we get 

\[
\bar g^{[{\bf r}]}(\lambda_{\bar B}(u) + zu^{h(\bar B)})=G_{B}(z)u^{\scalar{{\bf r}}{q(g,B)}}+\cdots,
\]
\noindent hence the second part of the lemma follows.
\end{proof}

\noindent The proof of the next lemma is similar in spirit to the proof of \cite[Theorem 4.1]{IMRN} 
and the proof of \cite[Theorem 9.2]{IMRN}.  The same arguments were used there in special situation. Here we repeat the proof for the convenience of the reader.

\begin{Lemma}
\label{R2}
Let $g(\underline x,y)\in \bK[[\underline x,y]]$ and $\Delta\subseteq \bR^{d+1}$ be a Newton polytope.  For any ${\bf r}\in (\bR_{>0})^d$ let $\bar\Delta^{[{\bf r}]}$ be the image of $\Delta$ by the linear mapping 
$\pi_{\bf r}:\bR^{d}\times \bR \longrightarrow \bR^2$ given by  
$({\bf a}, b)\mapsto (\langle {\bf r}, {\bf a} \rangle, b)$. 
If $\Delta(\bar g^{[{\bf r}]})=\bar{\Delta}^{[{\bf r}]}$  for every ${\bf r}\in (\bN\backslash\{0\})^d$, then $\Delta(g)=\Delta$. 
\end{Lemma}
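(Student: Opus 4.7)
The plan is to prove $\Delta(g)=\Delta$ by showing that the two Newton polytopes have identical support functions $\phi_C(v):=\inf_{x\in C}\scalar{v}{x}$ on the open cone $\bR^{d+1}_{>0}$. On this cone both $\phi_\Delta$ and $\phi_g$ are finite and continuous (only finitely many lattice points of the support fall below any fixed level for $v\in\bR^{d+1}_{>0}$, so the infimum is locally a minimum over a finite set), and a Newton polytope is uniquely determined by its support function there. The elementary identity $\phi_{\pi_{\bf r}(K)}(\mu,\nu)=\phi_K(\mu{\bf r},\nu)$ translates the hypothesis $\Delta(\bar g^{[{\bf r}]})=\pi_{\bf r}(\Delta)$ into
\[
\phi_{\bar g^{[{\bf r}]}}(\mu,\nu)=\phi_\Delta(\mu{\bf r},\nu)\qquad\text{for every }{\bf r}\in(\bN\setminus\{0\})^d,\ \mu,\nu>0,
\]
and the vectors of this form are dense in $\bR^{d+1}_{>0}$, so by continuity it suffices to check $\phi_g=\phi_\Delta$ on this dense subset.

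The easy inclusion $\Delta\subseteq\Delta(g)$ follows from the tautology $\mathrm{supp}(\bar g^{[{\bf r}]})\subseteq \pi_{\bf r}(\mathrm{supp}(g))$ (cancellations can only shrink the support), which gives $\phi_{\bar g^{[{\bf r}]}}(\mu,\nu)\geq \phi_g(\mu{\bf r},\nu)$ and therefore, combined with the hypothesis, $\phi_\Delta\geq \phi_g$ on the dense set and thus on all of $\bR^{d+1}_{>0}$.

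For the reverse inclusion $\Delta(g)\subseteq\Delta$ I would handle the dangerous cancellations not by choosing ${\bf r}$ cleverly, but by choosing the direction $v$ generically. Fix $v_0\in \bR^{d+1}_{>0}$ and perturb it to $v_0'$ for which $\scalar{v_0'}{\cdot}$ has a unique minimizer $p=({\bf i}^*,j^*)\in \mathrm{supp}(g)$ on $\Delta(g)$; such $v_0'$ are dense by the Baire category theorem, since the non-uniqueness locus is a countable union of hyperplanes $\{v:\scalar{v}{q_1}=\scalar{v}{q_2}\}$ indexed by pairs of support points of $g$. Then approximate $v_0'$ by $v=(\mu{\bf r},\nu)$ with ${\bf r}\in(\bN\setminus\{0\})^d$ close enough that $p$ remains the unique minimizer of $\scalar{v}{\cdot}$. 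For every $({\bf i},j^*)\in\mathrm{supp}(g)$ with ${\bf i}\neq {\bf i}^*$, unique minimality of $p$ forces $\scalar{v}{({\bf i},j^*)}>\scalar{v}{p}$, hence $\scalar{{\bf r}}{{\bf i}}>\scalar{{\bf r}}{{\bf i}^*}$, so no term can cancel the contribution of $a_{{\bf i}^*,j^*}\neq 0$ to the coefficient of $u^{\scalar{{\bf r}}{{\bf i}^*}}y^{j^*}$ in $\bar g^{[{\bf r}]}$. Therefore $(\scalar{{\bf r}}{{\bf i}^*},j^*)\in \mathrm{supp}(\bar g^{[{\bf r}]})$ and
\[
\phi_\Delta(v)=\phi_{\bar g^{[{\bf r}]}}(\mu,\nu)\leq \mu\scalar{{\bf r}}{{\bf i}^*}+\nu j^*=\scalar{v}{p}=\phi_g(v);
\]
letting $v\to v_0'\to v_0$ yields $\phi_\Delta\leq \phi_g$ on $\bR^{d+1}_{>0}$.

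The main subtlety is precisely the cancellation issue in $\bar g^{[{\bf r}]}$: a direct attempt to pick ${\bf r}\in(\bN\setminus\{0\})^d$ avoiding the countable family of hyperplanes $\{\scalar{{\bf r}}{{\bf i}-{\bf i}^*}=0\}$ indexed by ${\bf i}$ in the (possibly infinite) support of $g$ at level $j^*$ can fail because the bad set may exhaust all rational directions. The direction-first perturbation replaces this arithmetic obstruction by the open topological condition of unique minimality on $\Delta(g)$, which automatically forces the strict inequality ruling out cancellation.
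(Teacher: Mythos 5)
Your proof is correct and follows essentially the same route as the paper's: both reduce the claim to equality of the (continuous, concave) support functions on a dense set of directions of the form $(\mu{\bf r},\nu)$, and both handle the only delicate point --- possible cancellation in $\bar g^{[{\bf r}]}$ --- by perturbing the direction so that the supporting hyperplane meets $\Delta(g)$ in a single point, which forces the corresponding monomial to survive the substitution. Splitting the argument into the two one-sided inequalities $\Delta\subseteq\Delta(g)$ and $\Delta(g)\subseteq\Delta$ is merely a reorganization of the paper's single equality $l(\vec r,\Delta(g))=l\bigl((1,r_{d+1}),\Delta(\bar g^{[{\bf r}]})\bigr)$.
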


\noindent \begin{proof} For every Newton polytope $\Delta \subseteq (\bR_{\geq 0})^{d+1}$ and every $v \in (\bR_{\geq 0})^{d+1}$ we define the {\em support function} $l(v,\Delta)=\min\{\langle v,\alpha\rangle \;:\;\alpha \in \Delta\}$. To prove the lemma it is enough to show that the support functions $l(\cdot,\Delta(g))$ and  $l(\cdot,\Delta)$ are equal. As these functions are continuous 
it suffices to show the equality on a dense subset of $\bR_{\geq0}^{d+1}$.

\noindent Let $\vec r=(r_1,\dots,r_{d+1})=({\bf r},r_{d+1})\in\bR_{\geq0}^{d+1}$, where
${\bf r}=(r_1,\dots,r_d)$.\\

\noindent Perturbing $\vec r$ a little we may assume that the hyperplane 
$\{\,\alpha\in\bR^{d+1}:\scalar{\vec r}{\alpha}=l(\vec r,\Delta(g)\,\}$ 
supports $\Delta(g)$ at exactly one point $\check{\alpha}=(\underline{\check{\alpha}},\check{\alpha}_{d+1})$. 
Since after a small change of $\vec r$ the support point remains the same, we can assume,
perturbing  $\vec r$ again if necessary, that all $r_i$ are positive rational numbers.

\noindent We will show that 
\begin{equation}\label{Eq:3}
l(\vec r,\Delta)=l(\vec r,\Delta(g)).
\end{equation}

\noindent Multiplying $\vec r$ by the common denominator of $r_1$, \dots, $r_{d+1}$ we may 
assume that all $r_i$ are positive integers. At this point of the proof 
we fix $\vec r$.  
We claim that
$l(\vec r,\Delta)=l\bigl((1,r_{d+1}),\bar{\Delta}^{[{\bf r}]}\bigr)$ and 
$l(\vec r,\Delta(g))=l\bigl((1,r_{d+1}),\Delta\left (\bar g^{[{\bf r}]}\right )\bigr)$.

\medskip

\noindent First equality follows from the definition of $\pi_{\bf r}$ and the
identity 
\[
\scalar{\vec r}{\alpha}=\scalar{(1,r_{d+1})}{\pi_{\bf r}(\alpha)}
\] 

\noindent for $\alpha\in\bR^{d+1}.$

\medskip
\noindent  Write $\alpha=(\underline \alpha, \alpha_{d+1})\in \bR^{d+1}$ and $g(\underline x,y)=\sum_{\alpha}d_{\alpha}\underline {x}^{\underline {\alpha}}y^{\alpha_{d+1}}\in \bK[[\underline x,y]]$. Since the hyperplane 
$\{\,\alpha\in\bR^{d+1}:\scalar{\vec r}{\alpha}=l(\vec r,\Delta(g)\,\}$ 
supports $\Delta(g)$ at $\check{\alpha}$, the term 
$d_{\check{\alpha}}u^{\scalar{{\bf r}}{\underline{\check{\alpha}}}}y^{\check{\alpha}_{d+1}}$ of $\bar g^{[{\bf r}]}$, satisfies the equality 
$\scalar{{\bf r}}{\underline{\check{\alpha}}}+r_{d+1}\check{\alpha}_{d+1}=l(\vec r,\Delta(g))$, 
while for all other terms  $d_{\alpha}u^{\scalar{{\bf r}}
{\underline{\alpha}}}y^{\alpha_{d+1}}$ with $d_{\alpha}\neq0$ appearing in  $\bar g^{[{\bf r}]}$,
we have  $\scalar{{\bf r}}{\underline{\alpha}}+r_{d+1}\alpha_{d+1}>l(\vec r,\Delta(g))$.

\medskip

\noindent 
Hence 
$l\bigl((1,r_{d+1}),\Delta(g)\bigr)=
\scalar{{\bf r}}{\underline{\check{\alpha}}}+r_{d+1}\check{\alpha}_{d+1}=
l(\vec r,\Delta(g))$, so we get (\ref{Eq:3}). 
\end{proof}

\begin{Corollary}
\label{R1}
Let $g_1(\underline x,y)$,  $g_2(\underline x, y)\in \bK[[\underline x,y]]$. Suppose that 
$\Delta(\bar g_1^{[{\bf r}]})=\Delta(\bar g_2^{[{\bf r}]})$ for every ${\bf r}\in (\bN\backslash\{0\})^d$. Then  $\Delta(g_1)=\Delta(g_2)$.
\end{Corollary}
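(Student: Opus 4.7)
My plan is to compare the support functions of $\Delta(g_1)$ and $\Delta(g_2)$, recycling the technique used inside the proof of Lemma \ref{R2}. Recall that a Newton polytope $\Delta \subseteq \bR_{\geq 0}^{d+1}$ is determined by its support function $l(\vec r, \Delta) := \min\{\scalar{\vec r}{\alpha} : \alpha \in \Delta\}$, which is continuous in $\vec r$. Hence to establish $\Delta(g_1) = \Delta(g_2)$, it suffices to verify $l(\vec r, \Delta(g_1)) = l(\vec r, \Delta(g_2))$ on a dense subset of directions $\vec r \in \bR_{\geq 0}^{d+1}$.

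I would fix a direction $\vec r = ({\bf r}, r_{d+1})$ with positive entries. After a small perturbation, the hyperplane $\{\alpha : \scalar{\vec r}{\alpha} = l(\vec r, \Delta(g_i))\}$ exposes each $\Delta(g_i)$ at a single vertex (for $i = 1, 2$), and after a further perturbation and rescaling we may assume $\vec r$ has positive integer entries with ${\bf r} \in (\bN \setminus \{0\})^d$. The central identity from the proof of Lemma \ref{R2}, obtained by showing that the term corresponding to the isolated supporting vertex cannot cancel after the substitution $x_i = u^{r_i}$, yields
\[
l(\vec r, \Delta(g_i)) = l((1, r_{d+1}), \Delta(\bar g_i^{[{\bf r}]})), \qquad i = 1, 2.
\]
Combining these two identities with the hypothesis $\Delta(\bar g_1^{[{\bf r}]}) = \Delta(\bar g_2^{[{\bf r}]})$ gives $l(\vec r, \Delta(g_1)) = l(\vec r, \Delta(g_2))$ for our generic $\vec r$. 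Since such $\vec r$ form a dense subset, continuity of support functions then forces the equality on all of $\bR_{\geq 0}^{d+1}$, whence $\Delta(g_1) = \Delta(g_2)$.

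The main obstacle is that Lemma \ref{R2} cannot be invoked as a true black box by setting $\Delta := \Delta(g_2)$ and applying it to $g_1$: one would need $\Delta(\bar g_1^{[{\bf r}]}) = \pi_{\bf r}(\Delta(g_2))$ for \emph{every} ${\bf r}$, and for non-generic ${\bf r}$ there may be cancellations that shrink $\Delta(\bar g_2^{[{\bf r}]})$ strictly below $\pi_{\bf r}(\Delta(g_2))$. Restricting attention to generic directions and exploiting continuity, exactly as in the proof of Lemma \ref{R2}, sidesteps this issue, and the deduction requires no new technical input beyond what that proof already supplies.
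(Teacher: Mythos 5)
Your proof is correct and follows the same support-function argument that the paper uses to prove Lemma \ref{R2}; the paper states Corollary \ref{R1} without a written proof, evidently intending exactly this rerun of that argument, comparing $l(\vec r,\Delta(g_1))$ and $l(\vec r,\Delta(g_2))$ via $\Delta(\bar g_1^{[{\bf r}]})=\Delta(\bar g_2^{[{\bf r}]})$ for a dense set of integer directions exposing a single vertex of each polytope. Your observation that Lemma \ref{R2} cannot be invoked as a black box with $\Delta:=\Delta(g_2)$, since cancellations may make $\Delta(\bar g_2^{[{\bf r}]})$ strictly smaller than $\pi_{\bf r}(\Delta(g_2))$ for non-generic ${\bf r}$, is accurate and worth making explicit.
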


\begin{Theorem}\label{higher-discriminants}
Assume that $f\in\bK[[\underline{x}]][y]$ is a Kuo-Lu $k$-regular quasi-ordinary Weierstrass polynomial
and $p$ is a Weierstrass polynomial which is a factor of $f$ in $\bK[[\underline{x}]][y]$. 
Then the Newton polytope of $R(T):=\Res_y(f^{(k)}(y),p(y)-T)\in \bK[[\underline{x}]][T]$ is  equal to 
\begin{equation}\label{jac_Nd}
\sum_{\genfrac{}{}{0pt}{2}{B\in T(f)}{t_k(B)\neq 0}} 
\left\{\Teissr{t_k(B)q(p,B)}{\rule{0pt}{2.5ex} t_k(B)}{13}{6.5}\right\}.
\end{equation}
\end{Theorem}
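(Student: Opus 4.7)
The plan is to reduce the multivariable statement to the one-variable case via monomial substitutions, and then compute $R(T)$ explicitly through the Newton-Puiseux factorization of $f^{(k)}$.

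\textbf{Reduction to $d=1$.} Resultants and $y$-derivatives commute with the substitution $x_i=u^{r_i}$, so $\bar R^{[\mathbf{r}]}(u,T)=\Res_y\bigl(\bar f^{[\mathbf{r}]\,(k)}(y),\bar p^{[\mathbf{r}]}(y)-T\bigr)$. By Lemma~\ref{L:2} the bars of $T(f)$ and $T(\bar f^{[\mathbf{r}]})$ are in bijection with equal $B$-characteristic polynomials, so $\bar f^{[\mathbf{r}]}$ remains Kuo-Lu $k$-regular, and $t_k(\bar B)=t_k(B)$, $q(\bar p^{[\mathbf{r}]},\bar B)=\langle\mathbf{r},q(p,B)\rangle$. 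Hence the image of the polytope~\eqref{jac_Nd} under the projection $\pi_{\mathbf{r}}$ of Lemma~\ref{R2} is exactly the analogous one-variable polytope for $\bar R^{[\mathbf{r}]}$. By Lemma~\ref{R2} it suffices to prove the theorem when $d=1$.

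\textbf{The single-variable computation.} Write the Newton-Puiseux factorization $f^{(k)}(y)=\prod_{j=1}^{n-k}(y-\beta_j)$. Since $f^{(k)}$ and $p-T$ are monic in $y$, $R(T)=\pm\prod_{j=1}^{n-k}\bigl(p(\beta_j)-T\bigr)$. By Theorem~\ref{higher-Kuo-Lu}(iii), each $\beta_j$ lies in a unique $B^{\circ}$ with $B\in T_k(f)$; by part~(ii), the set $B^{\circ}$ contains exactly $t_k(B)$ of the $\beta_j$. For $\beta_j\in B^{\circ}$, Lemma~\ref{subst}(1) gives $p(\beta_j)=P_B(\lc_B\beta_j)\,u^{q(p,B)}+\cdots$.

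\textbf{The main point --- nonvanishing of $P_B(\lc_B\beta_j)$.} This is where Kuo-Lu $k$-regularity enters. Decomposing $F_B^{(k)}={\cal F}_B^{\oplus}{\cal F}_B^{\ominus}$ as in Remark~\ref{r: +-}, the fact that $\beta_j$ lies outside every post-bar of $B$ belonging to $T_k(f)$ forces $\lc_B\beta_j$ to be a root of ${\cal F}_B^{\ominus}$ rather than of ${\cal F}_B^{\oplus}$. Kuo-Lu $k$-regularity of $f$ means $F_B$ and ${\cal F}_B^{\ominus}$ share no roots, so $\lc_B\beta_j$ is not a root of $F_B$. Since the roots of $P_B$ (given by Lemma~\ref{subst}(2) applied to $p$ as a factor of $f$) form a subset of the roots of $F_B$, we conclude $P_B(\lc_B\beta_j)\neq 0$. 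Therefore $p(\beta_j)-T$ has Newton polygon equal to the elementary polytope $\bigl\{\Teissr{q(p,B)}{1}{3}{1.5}\bigr\}$ with vertices $(q(p,B),0)$ and $(0,1)$.

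\textbf{Minkowski sum.} Since $\Delta$ is additive on products, $\Delta(R(T))$ equals the Minkowski sum of the Newton polygons of the $n-k$ factors. Grouping the $t_k(B)$ factors attached to a single bar $B$ produces an elementary polytope $\bigl\{\Teissr{t_k(B)q(p,B)}{t_k(B)}{5}{2.5}\bigr\}$, and summing over bars with $t_k(B)\neq 0$ yields~\eqref{jac_Nd}. The only delicate step is the third paragraph above; everything else is a bookkeeping exercise organised by the compatibility framework and the combinatorics of the Kuo-Lu tree.
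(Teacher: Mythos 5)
Your proposal is correct and follows essentially the same route as the paper: reduction to $d=1$ by monomial substitution (Lemmas~\ref{L:2} and~\ref{R2}), the factorization $R(T)=\pm\prod_j(p(\beta_j)-T)$ organised by the sets $B^{\circ}$, and the key nonvanishing $P_B(\lc_B\beta_j)\neq 0$ coming from $k$-regularity. The only cosmetic difference is that you rederive this nonvanishing directly from the ${\cal F}^{\oplus}/{\cal F}^{\ominus}$ decomposition of Remarks~\ref{r: +-} and~\ref{R:1}, whereas the paper invokes Theorem~\ref{higher-Kuo-Lu}(iv), which packages the same argument.
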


\noindent \begin{proof}
\noindent First we will prove the theorem for $d=1$. We use the notation of Theorem \ref{higher-Kuo-Lu}. Let $\prod_{j=1}^{n-k}(y-\beta_j)$  be the Newton-Puiseux factorization of $f^{(k)}(y)$. By the well-known properties of the resultants we have
\begin{equation}
\label{Ress}
\Res_y(f^{(k)}(y),p(y)-T)=\pm \prod_{j=1}^{n-k}(p(\beta_j)-T).
\end{equation}

\noindent 
By Theorem~\ref{higher-Kuo-Lu}, for every $\beta_j$ there exists a unique bar $B\in T(f)$ such that  $\beta_j\in B^{\circ}.$ For such a bar,  $h(B)$ is finite and $t_k(B)\neq 0$.
By Corollary~\ref{CCKiel} the polynomial $p$ is compatible with $B$ and by (\ref{eq:F,q2}) of Lemma \ref{subst} ${P}_{B}(z)$ is a factor of ${F}_{B}(z)$. 
By Theorem  \ref{higher-Kuo-Lu} $(iv)$ we get that $O(\alpha_i,\beta_j)=h(B)$ for any $\alpha_i\in B$. Hence $\lc_B\beta_j$ does not belong to the set $\{\lc_B\alpha_i\;:\;\alpha_i\in B\}$. So by the equality (\ref{eq:F,q2}) in Lemma \ref{subst} we have ${ F}_B(\lc_B\beta_j)\neq 0$ and consequently ${P}_{B}(\lc_B\beta_j)\neq 0$.
Now, using  equality (\ref{eq:F,q1xxx}) of Lemma \ref{subst} we conclude that the Newton polytope of $p(\beta_j)-T$ is equal to 
 $\left\{\Teissr{q(p,B)}{1}{7}{3.5}\right\}$.
 \noindent Using the property that  the Newton polytope of a product is the Minkowski sum of the Newton polytopes of its factors, and $(ii)$ of Theorem  \ref{higher-Kuo-Lu} we finish the proof for $d=1$.
 
\medskip
\noindent Assume now that $d>1$.  

\medskip 
\noindent Let  $x_1=u^{r_1}$,\dots, $x_d=u^{r_d}$ be a monomial substitution, where $r_i$ are positive integers. By Lemma ~\ref{L:2} $ f^{[{\bf r}]}$ is Kuo-Lu $k$-regular, hence by the first part of the proof ($d=1$)
\[
\Delta(\bar R^{[{\bf r}]})=
\sum_{\genfrac{}{}{0pt}{2}{B\in T(f)}{t_k(B)\neq 0}} 
\left\{\Teissr{t_k(\bar B)q(\bar p^{[{\bf r}]},\bar B)}{\rule{0pt}{2.5ex} t_k(\bar B)}{14}{7}\right\}.
\]

\noindent For any elementary polytope of the above sum, Lemma \ref{L:2} gives 

\[
\left\{\Teissr{t_k(\bar B)q(\bar p^{[{\bf r}]},\bar B)}{\rule{0pt}{2.5ex} t_k(\bar B)}{14}{7}\right\} = 
\left\{\Teissr{t_k(B)\langle {\bf r},q(p,B)\rangle}{\rule{0pt}{2.5ex} t_k(B)}{16}{8}\right\} = 
\pi_{\bf r}\left(\left\{\Teissr{t_k(B)q(p,B)}{\rule{0pt}{2.5ex} t_k(B)}{13}{6.5}\right\}\right).
\]
Since the image of the Minkowski sum of Newton polytopes is the Minkowski sum of the images, we get 
$\Delta(\bar R^{[{\bf r}]})=\pi_{\bf r}(\Delta)$, where $\Delta$ denotes the Newton polytope given in (\ref{jac_Nd}). By Lemma~\ref{R2} we get
$\Delta(R)=\Delta$.
\end{proof}

\section{ Eggers tree of a quasi-ordinary Weierstrass polynomial}
\label{section-Eggers-tree}

\noindent In this section we introduce the {\em Eggers tree} of a quasi-ordinary Weierstrass polynomial $f$, after the conjugacy relation defined in Section \ref{section-Conjugate-pseudo-balls}. Denote by  $[B]$ the conjugacy class of the pseudo-ball $B$  of the Kuo-Lu $T(f)$. By definition, the {\em Eggers tree} of $f$, denoted by $E(f)$, is the set of conjugacy classes with the natural order induced by the Kuo-Lu tree. This is the natural generalization of the Eggers tree associated with plane curves in \cite{Eggers}. The notion of Eggers tree, for quasi-ordinary singularities, was introduced by Popescu-Pampu in \cite{tesis}. He defined a slightly different notion of the Eggers tree, since he generalized  to quasi-ordinary singularities the version of Eggers tree defined for curves in \cite{Wall}.

\medskip
\noindent The leaves of $E(f)$ correspond with irreducible factors of $f$. Following Eggers we draw them in white color. By definition, the {\em root} of $E(f)$ is its vertex of minimum height. The {\em branches} of $E(f)$ are the smallest sub-trees of $E(f)$ containing the root and one of its leaves. Let $[B]$ be a vertex in the branch of $E(f)$ corresponding with the irreducible componente $f_i$ of $f$. Eggers draws in a dashed way the edge leaving from  the vertex $[B]$ in this branch if there are not two roots of $f_{i}$ with contact  $h(B)$.

\medskip
\noindent Recall  that the number of pseudo-balls conjugate with  a quasi-ordinary pseudo-ball $B$ is $N(B)$ (see page \pageref{card[B]}).

\medskip
\noindent Let $[B]$ be a vertex of the Eggers tree of a quasi-ordinary polynomial $f$. By Lemma \ref{LL:1}, for any $k\in \{1,\ldots, \deg f\}$,  the numbers $n_{k}(B)$ and $t_{k}(B)$ do not depend on the representative of $[B]$. Moreover, if $p(y)\in \bK[[\underline{x}]][y]$ is a Weierstrass polynomial  compatible with $B$ then the number $q(p,B)$  and the degree of its $B$-characteristic polynomial are also independent of the representative of $[B]$. 

\medskip

\label{Egg ex}
\noindent 
The Eggers tree of the quasi-ordinary polynomial $f=f_{1}f_{2}$ from Example \ref{ex:KL} is 
\begin{center}
\begin{tikzpicture}[scale=0.5]
\draw [thick](0,0) -- (-1.5,2); 
\draw[thick, dashed](0,0) -- (1.5,2);
\draw (0,-0.1) node[below]{$[B]$};
\draw (-1.5,2.2) node[above]{$f_{1}$};
\draw (1.5,2.2) node[above]{$f_{2}$};
\node[draw,circle,inner sep=3pt,fill=black] at (0,0) {};
\node[draw,circle,inner sep=3pt,fill=white] at (1.5,2) {};
\node[draw,circle,inner sep=3pt,fill=white] at (-1.5,2) {};
\end{tikzpicture}
\end{center}

\medskip

\begin{Remark}
\label{incr}
If $p$ is an irreducible factor of $f$ then, following Lemma \ref{in-chain}, the sequence $\{q(p,B)\}_{[B]}$ is increasing along the branch ${P}$ of the Eggers tree of $f$ containing the leave representing $p$. Moreover, if $[B]$ does not belong to $P$ then $q(p,B)=q(p,B_{0})$, where $[B_{0}]$ is the last common vertex of $P$ and the branches of the Eggers tree containing $[B]$. Hence, the set $\{q(p,B)\}_{[B]}$ is well-ordered.
\end{Remark}

\noindent After Remark \ref{incr} we get

\begin{Corollary}
\label{coro:polygonal}
Let $f\in\bK[[\underline{x}]][y]$ be a Kuo-Lu $k$-regular quasi-ordinary Weierstrass polynomial
and $p$  a Weierstrass polynomial which is an irreducible factor of $f$ in $\bK[[\underline{x}]][y]$. Then the Newton polytope in (\ref{jac_Nd}) is polygonal.
\end{Corollary}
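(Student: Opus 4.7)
The key preparatory observation is in the discussion following equation (\ref{canonical}): a Minkowski sum of elementary Newton polytopes is polygonal precisely when the inclinations of its summands can be well-ordered. In our situation the elementary summand indexed by $B$ is $\{t_k(B)q(p,B),t_k(B)\}$, whose inclination is $\frac{1}{t_k(B)}\cdot t_k(B)q(p,B)=q(p,B)$. So the whole problem reduces to showing that the set
\[
\mathcal I := \{\, q(p,B) : B\in T(f),\; t_k(B)\neq 0\,\}\subset \bQ_{\geq 0}^d
\]
is well-ordered with respect to the componentwise order introduced in Section \ref{section-Kuo-Lu-tree}.

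The next step is to pass from the Kuo-Lu tree $T(f)$ to the Eggers tree $E(f)$. Every $B\in T(f)$ of finite height is a quasi-ordinary pseudo-ball, hence by Corollary \ref{CCKiel} the irreducible factor $p$ is compatible with $B$. By the paragraph immediately preceding Remark \ref{incr}, the number $q(p,B)$ depends only on the conjugacy class $[B]\in E(f)$; consequently $\mathcal I=\{q(p,[B]):[B]\in E(f),\,t_k([B])\neq 0\}$.

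Now I would exploit the irreducibility of $p$: it corresponds to a single leaf of $E(f)$ and therefore to a unique branch $P$ joining that leaf to the root. Remark \ref{incr} supplies exactly the two pieces of structure that we need. First, along $P$ the values $q(p,[B])$ form an increasing (hence totally ordered) finite sequence. Second, if $[B]\notin P$ then $q(p,[B])=q(p,[B_0])$, where $[B_0]$ is the last vertex common to $P$ and the branch through $[B]$. Consequently every element of $\mathcal I$ already appears as a value $q(p,[B'])$ for some $[B']\in P$, and the set $\mathcal I$ coincides with the finite chain of values along $P$. A finite chain in $\bQ_{\geq 0}^d$ is automatically well-ordered, so the criterion from Section \ref{section-Newton-polytopes} applies and the polytope in (\ref{jac_Nd}) is polygonal.

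I do not anticipate a serious technical obstacle: the statement is essentially a repackaging of Remark \ref{incr} together with the polygonality criterion following (\ref{canonical}). The only point requiring mild care is confirming that the comparability of $q(p,\cdot)$-values off the branch $P$ with those on $P$ (via the last common vertex $[B_0]$) really yields a chain rather than merely a collection of pairwise comparable pairs — but this is exactly what Remark \ref{incr} asserts when it says the set is well-ordered.
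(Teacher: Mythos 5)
Your argument is correct and is essentially the paper's own proof: the authors simply invoke Remark \ref{incr} (which, via Lemma \ref{in-chain}, gives that the values $q(p,B)$ form a well-ordered set for irreducible $p$) together with the polygonality criterion stated after (\ref{canonical}). Your write-up just unpacks those two ingredients, correctly identifying the inclination of each elementary summand as $q(p,B)$ and reducing everything to the chain of values along the branch of $E(f)$ determined by $p$.
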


\section{Irreducible factors of higher derivatives}
\label{section-Irreducible factors}

Let $f$ be a quasi-ordinary Weierstrass polynomial. In this section we 
study irreducible factors of normalized higher derivatives $f^{(k)}$. 
We show that every such an irreducible factor can be associated with  a certain vertex $[B]$ of the Eggers tree of $f$.  By definition an {\em Eggers factor} will be the product of all irreducible factors associated with the same vertex of $E(f)$. The {\em Eggers factorization} of a higher derivative is the product of all its Eggers factors. It generalizes to higher derivatives the factorization of the first polar given in \cite{Eggers} and \cite{GB} for  plane curves and in \cite{GB-GP} for quasi-ordinary polynomials.\\

\noindent Let $F_B(z)$ be the $B$-characteristic polynomial of $f$. After Remark \ref{r: +-}, the polynomial $F_B^{(k)}(z)$ is the product of two coprime polynomials
${\cal F}^{\oplus}_B(z)$ and ${\cal F}^{\ominus}_B(z),$
where 
\[
{\cal F}^{\oplus}_B(z)=\prod_{B\perp_{z_i} B_i} (z-z_i)^{n_k(B_i)}.
\]

\begin{Theorem}
\label{pack}
Let $f(y)$ be a quasi-ordinary Weierstrass polynomial
and let $g(y) \in \bK[[\underline{x}]][y]$ be a Weierstrass polynomial which 
is an irreducible factor of $f^{(k)}(y)$. 
Then there exists $[B]\in E(f)$, with $B\in T_k(f)$,  such that:
\begin{enumerate}
\item If $B'\in T_k(f)\backslash [B]$ then every root of $G_{B'}(z)$ 
        is a root of ${\cal F}^{\oplus}_{B'}(z)$.
\item If $B'\in T_k(f)\cap [B]$ then $G_{B'}(z)$ 
         and  ${\cal F}^{\oplus}_{B'}(z)$ do not have common roots.
         Moreover 
\begin{equation}
\label{fact}
G_B(z)=az^l\;\; \hbox{\rm or }\;\; G_B(z)=a(z^{n(B)}-c)^l
\end{equation}
\noindent for some $l\geq 1$ and $a,c\in \bK\backslash \{0\}$. 
If $l=1$ then $g(y)$ is quasi-ordinary.
\end{enumerate}
\end{Theorem}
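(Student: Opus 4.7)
The idea is to identify the class $[B]$ via the deepest bar of $T_k(f)$ a Newton--Puiseux root of $g$ enters, verify the local picture (1)--(2) by pulling back to the one-variable case through monomial substitutions, and then use irreducibility of $g$ together with Corollary \ref{irred} to pin down the shape of $G_B(z)$.

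\textbf{Compatibility and choice of $[B]$.} Since $f$ is compatible with every finite-height bar of $T(f)$ (the Corollary after Lemma \ref{subst}), Lemma \ref{derivatives} makes $f^{(k)}$ compatible with every $B'\in T_k(f)$, and by Corollary \ref{CCKiel} the same holds for the factor $g$. Fix a Newton--Puiseux root $\beta$ of $g$. A monomial substitution $\bar{f}^{[{\bf r}]}$ with generic ${\bf r}\in(\bN\setminus\{0\})^d$ turns $\bar f^{[{\bf r}]}$ into a square-free Weierstrass polynomial in one variable; combining the bijection of Lemma \ref{L:2} with Theorem \ref{higher-Kuo-Lu}(iii) produces a unique bar $B\in T_k(f)$ through which $\beta$ passes without entering any post-bar of $B$ still in $T_k(f)$. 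Lemma \ref{LL:1} implies that a different root $\beta'$ of $g$ yields a bar conjugate to $B$, so the class $[B]\in E(f)$ is well-defined.

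\textbf{Verifying (1) and (2).} For any $B'\in T_k(f)$, a nonzero $z_0$ is a root of $G_{B'}(z)$ exactly when some root $\gamma$ of $g$ satisfies $\gamma\in B'$ and $\lc_{B'}\gamma=z_0$, while $z_0$ is a root of ${\cal F}^{\oplus}_{B'}(z)$ exactly when $B'$ has a post-bar in $T_k(f)$ supported at $z_0$. If $B'\notin[B]$, then every root of $g$ that enters $B'$ must exit through some post-bar of $B'$ lying in $T_k(f)$ (its true exit bar being conjugate to $B$, hence not $B'$), which gives (1). If $B'\in[B]$, the exits of the roots of $g$ passing through $B'$ happen in $B'^{\circ}$, so $G_{B'}$ and ${\cal F}^{\oplus}_{B'}$ are coprime, which gives (2).

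\textbf{Shape of $G_B$ and the $l=1$ case.} By Lemma \ref{power}(1) we have $G_B(z)=z^k H(z^{n(B)})$. If $H$ had two distinct nonzero roots $c_1\neq c_2$, then $G_B$ would split as a product $G_1G_2$ of two coprime nonconstant polynomials, where each $G_i$ collects the orbit of $c_i$ under $n(B)$-th roots of unity. Choosing a factor $p$ of $f$ whose Newton--Puiseux roots meet $B$, a computation parallel to Theorem \ref{higher-discriminants} (using Lemma \ref{L:2} to reduce to one variable) shows that the Newton polytope of $R(T)=\Res_y(g(y),p(y)-T)$ acquires at its $B$-contribution two distinct inclinations, producing an edge whose endpoints satisfy the hypotheses of Lemma \ref{RS}; Corollary \ref{irred} then forces $g$ to be reducible, contradicting the hypothesis. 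Hence $G_B(z)=az^l$ or $G_B(z)=a(z^{n(B)}-c)^l$. If $l=1$, $G_B$ has $n(B)$ distinct nonzero roots, so $g$ has $n(B)\cdot N(B)$ Newton--Puiseux roots, all of which are Galois conjugates of a single fractional power series of the form $\lambda_B+c^{1/n(B)}\underline{x}^{h(B)}+\cdots$; thus $g$ is the minimal polynomial of this series over $\bL$, which by the Lipman--Gonz\'alez-P\'erez criterion makes $g$ quasi-ordinary. The main obstacle is this last reducibility argument: one must show that the hypothetical coprime splitting of $G_B$ genuinely produces an edge of $\Delta(R)$ satisfying Lemma \ref{RS}, in particular handling the potential absence of Kuo-Lu $k$-regularity by restricting attention to the bar $B$ itself and invoking generic monomial substitutions to transfer the argument from two variables back to $d+1$ variables via Corollary \ref{R1}.
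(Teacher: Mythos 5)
Your toolkit is the right one (monomial substitutions, Theorem \ref{higher-Kuo-Lu}, Lemma \ref{power}, Corollary \ref{irred}), but the central step of the theorem is missing. You define $[B]$ as the conjugacy class of the exit bar of a fixed root $\beta$ and assert that ``Lemma \ref{LL:1} implies that a different root $\beta'$ of $g$ yields a bar conjugate to $B$.'' This is precisely the hard content of the theorem, and Lemma \ref{LL:1} does not give it: that lemma only says that the characteristic polynomials of a compatible polynomial at two conjugate bars agree up to roots of unity; it says nothing about where the roots of an irreducible factor of $f^{(k)}$ go. Nor is there a transitive Galois action to fall back on: an irreducible factor $g$ of $f^{(k)}$ need not be quasi-ordinary, so its roots need not lie in $\bK[[\underline{x}^{1/\bN}]]$ at all, and after a monomial substitution $\bar g^{[{\bf r}]}$ need not remain irreducible, so its one-variable Newton--Puiseux roots need not form a single Galois orbit; one must also check that the conjugacy class obtained is independent of the substitution ${\bf r}$. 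The paper's proof devotes most of its length to exactly this point: it defines the set ${\cal T}$ of bars intrinsically via the characteristic polynomials $G_B(z)$ (substitution-independent by Lemma \ref{L:2}), takes the infimum $[B_0]$ of the corresponding set ${\cal E}\subseteq E(f)$, introduces the index sets $I$ and $J$, and shows via the Newton polytope of $\Res_y(g,p-T)$ and Corollary \ref{irred} that $I=\emptyset$, which forces ${\cal E}$ to be a singleton. You deploy this resultant-plus-irreducibility mechanism only for the shape of $G_B$, where it is indeed the right tool, but you omit it where it is most needed.

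Two smaller problems. First, in ruling out bad shapes of $G_B$ you only exclude two distinct nonzero roots of $H$; you must also exclude the mixed form $z^a(z^{n(B)}-c)^b$ with $a,b>0$, which the same resultant argument (taking $p$ to be the minimal polynomial of $\lambda_B+c\underline{x}^{h(B)}$, so that $J\neq\emptyset$ and a nonconstant coprime cofactor would force $I\neq\emptyset$) handles, as in the paper. Second, your proof that $l=1$ implies $g$ quasi-ordinary is circular: you assume the roots of $g$ are fractional power series of the form $\lambda_B+c^{1/n(B)}\underline{x}^{h(B)}+\cdots$, but the existence of such fractional power series roots is essentially equivalent to what is to be proved. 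The paper instead compares $g$ with the quasi-ordinary minimal polynomial $p$ of the relevant truncation, shows after every monomial substitution that a bar-preserving bijection $\Zer\bar g\to\Zer\bar p$ preserves contacts, deduces that the discriminants of $g$ and $p$ have equal Newton polytopes via Corollary \ref{R1}, and concludes that the discriminant of $g$ is a monomial times a unit.
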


\noindent \begin{proof} 
Let 
${\cal T}=\{\,B\in T_k(f): G_B(z) \mbox{ has a root which is not a root of } {\cal F}^{\oplus}_B(z)\,\}$.  
By Remark \ref{R:1}, $B\in {\cal T}$ if and only if for any monomial substitution $\bar g$ 
has a Newton-Puiseux root that belongs to $\bar B^{\circ}$.

\medskip
\noindent Let ${\cal E}=\{\,[B]\in E(f): B\in {\cal T}\,\}$. We will show that ${\cal E}$ has only one element. Suppose that this is not the case,
and let $[B_0]$ be the infimum of ${\cal E}$ in the ordered set $E(f)$ 
(the infimum exists because $E(f)$ has the structure of a tree).  

\medskip
\noindent 
Let $[B']$ be an element of ${\cal E}$ different from $[B_0]$ and let $p$ 
be any irreducible factor of $f$ such that one of its roots belongs to $B'$. 
By definition of the Eggers tree there exists $B_1\in [B_0]$ such that $B'\subsetneq B_1$. Since $B'\in T_k(f)$, 
one of the roots of $P_{B_1}(z)$ has multiplicity bigger or equal than $k$. Hence,  
by  the second statement of Lemma \ref{power} all the roots of $P_{B_1}(z)$ have this property. By  \eqref{k-derivative}, the polynomial $P_{B_1}(z)$ could only share roots with ${\cal F}^{\oplus}_{B_1}(z)$. Hence, by Remark \ref{r: +-} the polynomials $P_{B_1}(z)$ and ${\cal F}^{\ominus}_{B_1}(z)$ are coprime.

\medskip
\noindent Let $\bar g=\prod_{i=1}^m(y-\bar\beta_i)$ be the Newton-Puiseux factorization of $g$ after some 
monomial substitution.  Fix $B\in [B_0]$. By  Lemmas \ref{LL:1} and \ref{L:2} we get $q(\bar p,\bar B)=q(\bar p,\bar B_0)$. 
Let us define two sets of indexes associated with $\bar B$:
$$ I_{\bar B}=\{i: \bar\beta_i\in \bar B, P_B(\lc_{\bar B}\bar\beta_i)\neq 0\,\}, $$
$$ J_{\bar B}=\{i: \bar\beta_i\in \bar B, P_B(\lc_{\bar B}\bar\beta_i)= 0\,\}. $$

\medskip
\noindent 
Directly from the definition of  $P_B$ we have: 
if $i\in I_{\bar B}$ then $\ord \bar{p}(\bar\beta_i)=q(\bar p,\bar B_0)$, 
and if $i\in J_{\bar B}$ then  $\ord \bar{p}(\bar\beta_i)>q(\bar p,\bar B_0)$. 

\medskip
\noindent
The cardinality of $I_{\bar B}$ is equal to the number of roots of $G_B(z)$ counted with multiplicities
which are not the roots of $P_B(z)$. Similarly
the cardinality of $J_{\bar B}$ is equal to the number of roots of $G_B(z)$ counted with multiplicities
which are the roots of $P_B(z)$. Hence the cardinality of these sets does not depend 
on the choice of the monomial substitution. 
Let $I:=\bigcup_{B\in [B_0]} I_{\bar B}$ and $J:=\bigcup_{B\in [B_0]} J_{\bar B}$. Observe that 
 $\ord \bar{p}(\bar\beta_i)=q(\bar p,\bar B_0)$ for $i\in I$ and 
 $\ord \bar{p}(\bar\beta_i)>q(\bar p,\bar B_0)$ for $i\in J$.
 
\medskip
\noindent
The sets $I$ and $J$ depend on the choice of the monomial substitution but their cardinality does not. We will show that the set $J$ is nonempty. 
Since $B'\in {\cal T}$, there exists $\bar\beta_i \in \bar B'$.
Any root of $\bar p$ that belongs to $\bar B'$ has the same leading 
coefficient 
with respect to  $\bar B_1$ as $\bar\beta_i$. 
Hence  $P_{B_1}(\lc_{\bar B_1}\bar\beta_i)=0$, which gives  $i\in J_{\bar{B}_1}\subset J$.


\medskip
\noindent
Now we will prove that the set $I$ is empty. Suppose that it is not the case. Put $R(T):= \Res_y(g, p-T)$ and $\bar R(T):= \Res_y(\bar g, \bar p-T)$. We can write
\begin{eqnarray*}
R(T)        &=& \pm T^m+c_1T^{m-1}+\cdots +c_m, \\
\bar R(T) &=& \pm T^m+\bar{c}_1T^{m-1}+\cdots +\bar{c}_m,
\end{eqnarray*} 

\noindent for some $c_i\in \bK[[\underline{x}]]$. 
By a well-known formula for the resultant we have $\bar R(T)=\pm\prod_{i=1}^m(\bar p(\bar\beta_i)-T)$. Since the Newton polygon of a product is the Minkowski sum of the Newton polygons of its factors, $\Delta(\bar R(T))$ has an edge of inclination $q(\bar p,\bar B_{0})$
starting in the point $(0,m)$. The projection of this edge to the vertical axis has length $\sharp I$. This gives
\begin{eqnarray*}
\ord \bar{c}_i&\geq& iq(\bar p,\bar B_{0}) \;\;\hbox{\rm for $1\leq i < \sharp I$}, \nonumber \\ 
\ord \bar{c}_i&=& iq(\bar p,\bar B_{0}) \;\;\hbox{\rm  for $i=\sharp I$},\\ 
\ord \bar{c}_i&>& iq(\bar p,\bar B_{0})  \;\;\hbox{\rm for $  \sharp I<i\leq m$}. \nonumber
\end{eqnarray*}

\noindent Since the monomial substitution was arbitrary, 
we have
\begin{eqnarray*}
{c}_i\bK[[\underline{x}^{1/\bN}]]&\subseteq& \underline{x}^{iq (p,B_{0})}\bK[[\underline{x}^{1/\bN}]] \;\;\;\;\;\;\hbox{\rm for $1\leq i < \sharp I$},  \\
{c}_i\bK[[\underline{x}^{1/\bN}]]&=& \underline{x}^{iq (p,B_{0})}\bK[[\underline{x}^{1/\bN}]]
 \;\;\;\;\;\;\hbox{\rm  for $i=\sharp I$}, \\ 
{c}_i\bK[[\underline{x}^{1/\bN}]]&\subsetneq& \underline{x}^{iq (p,B_{0})}\bK[[\underline{x}^{1/\bN}]]   \;\;\;\;\;\;\hbox{\rm for $  \sharp I<i\leq m$}.
\end{eqnarray*}

\medskip
\noindent
By Corollary~\ref{irred} 
$g$ is not irreducible and we get a contradiction. 

\medskip
\noindent
We conclude that $I=\emptyset$. This means that for every $\bar\beta_i$
there exists $B\in[B_0]$ such that $\bar\beta_i\in\bar B$ and 
$\ord \bar{p}(\bar\beta_i)>q(\bar p,\bar B)$. By Remark \ref{R:1}, $\bar \beta_i$ belongs to a post-bar of $\bar B$, 
which has a nonempty intersection with $\Zer \bar p$.
All post-bars of $B\in[B_0]$ that have nonempty intersection with $\Zer p$ conjugate.  
They form the vertex of $E(f)$, bigger than $[B_0]$,  which is smaller or equal (with the natural order in $E(f)$) than any element of $\cal E$.
Hence $[B_0]$ cannot be the infimum of ${\cal E}$ and we arrive again at a contradiction. 

\medskip
\noindent
We have shown that ${\cal E}$ has only one element. Denote it  by  $[B_0]$.  Hence  for any monomial substitution we have 
$\Zer \bar g \subset \bigcup_{B\in [B_0]}\bar B^{\circ}$.
By Remark \ref{R:1} we get~\textit{1.}\  and the first part of~\textit{2.}
 
\medskip
\noindent
Now we will find the form of $G_B(z)$, for any $B\in [B_0]$. 
If for every $\bar\beta_i\in B$ the leading coefficient $\lc_{\bar B} \bar\beta_i$ is $0$,  then obviously $G_B(z)=az^l$. 
Otherwise by Lemma \ref{power} there exist $c\neq 0$ and a polynomial $G_1(z)$ 
coprime with $z^{n(B)}-c^{n(B)}$ such that $G_B(z)=G_1(z)(z^{n(B)}-c^{n(B)})^l$.  
Let $p(y)$ be the minimal Weierstrass polynomial of  $\lambda_B(\underline{x})+c\underline{x}^{h(B)}$. 
Then $P_B(z)=\mbox{const}\cdot(z^{n(B)}-c^{n(B)})$.

\medskip
\noindent Proceeding as in the first part of the proof we define again the sets $I$, $J$ of indexes. 
By the choice of $p(y)$ the set $J$ is nonempty. If the polynomial $G_1(z)$ has positive degree 
then the set $I$ is nonempty and we arrive at a contradiction. Hence $G_1(z)$ is a constant 
which proves the second part of the theorem.

\medskip
\noindent
Now we prove that if $l=1$ in (\ref{fact}) then $g(y)$ is quasi-ordinary. 
Let $p(y)\in  \bK[[\underline{x}]][y]$ be  the minimal polynomial of $\lambda_B(\underline{x})$ if $G_B(z)=az$ or the minimal polynomial of $\lambda_B(\underline{x})+c{\underline{x}}^{h(B)}$ 
if $G_B(z)=a(z^{n(B)}-c^{n(B)})$.
Then  $G_B(z)$ is equal to $P_B(z)$ up to multiplication by a constant. 
By Lemma~\ref{LL:1} for any $B'\in[B_{0}]=[B]$
the characteristic polynomials $G_{B'}(z)$ and $P_{B'}(z)$ have the same form, 
in particular have the same number of roots and all their roots are simple. 
Take any monomial substitution and let $\bar\beta' $, $\bar\beta''$ be different roots of $\bar g(y)$. 
Since $\Zer \bar g \subset \bigcup_{B\in [B_0]}\bar B^{\circ}$ there exist $B', B''\in [B_0]$ 
such that $\bar\beta'\in\bar B'$ and $\bar\beta''\in\bar B''$.
If $B'=B''$ then $O(\bar\beta',\bar\beta'')=h(\bar B')$ because $\bar\beta'$ and $\bar\beta''$
have different leading coefficients with respect to $\bar B'$. 
If $B'\neq B''$ then $O(\bar\beta',\bar\beta'')=O(\lambda_{\bar B'},\lambda_{\bar B''})$.
In both cases the contact $O(\bar\beta',\bar\beta'')$ depends only on $B'$ and $B''$. 
The same argument applies to the roots of $\bar p(y)$. 
As a consequence any bijection $\Phi:\Zer \bar g\to \Zer \bar p$ such that 
$\Phi(\bar B'\cap\Zer \bar g)=\bar B'\cap\Zer \bar p$ for $B'\in [B_0]$ preserves contacts. 

\medskip

\noindent Since the discriminant of a monic polynomial is the product of differences of its roots, 
the discriminant of $\bar{g}$ and the discriminant of $\bar{p}$ have the same order.
Then by Corollary~\ref{R1}
  the Newton polytopes of the discriminants of $g(y)$ and $p(y)$ are equal 
and we conclude that $g(y)$ is quasi-ordinary. 
\end{proof}

\medskip
\noindent
For $k$-regular quasi-ordinary Weierstrass polynomials we can say more. 

\begin{Corollary}
\label{pack1}
Let $f(y)$ be a $k$-regular Kuo-Lu quasi-ordinary Weierstrass polynomial
and let $g(y) \in \bK[[\underline{x}]][y]$ be a Weierstrass polynomial which 
is an irreducible factor of $f^{(k)}(y)$. 
Then there exists $[B]\in E(f)$ with $B\in T_k(f)$  such that:
\begin{enumerate}
\item If $B'\in T(f)\cap [B]$ then $G_{B'}(z)$ and $F_{B'}(z)$ do not have common roots.
\item If $B'\in T_{k}(f)\backslash [B]$ then every root of $G_{B'}(z)$ is a root of ${\cal F}^{\oplus}_{B'}(z)$.
\item If $B'\in T(f)\setminus T_k(f)$  then $G_{B'}(z)$ is a non-zero constant polynomial.
\end{enumerate}
\end{Corollary}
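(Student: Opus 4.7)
Statement~(2) is literally the content of Theorem~\ref{pack}(1) applied at the distinguished vertex $[B]$ provided by that theorem, so nothing new is needed. For statement~(1), the plan is to exploit that $g$ divides $f^{(k)}$: since $B'\in[B]\subseteq T_k(f)$ one has $k\le m(B')=\deg F_{B'}$, so Lemma~\ref{derivatives} makes $f^{(k)}$ compatible with $B'$ with characteristic polynomial proportional to $F_{B'}^{(k)}(z)={\cal F}^{\oplus}_{B'}(z){\cal F}^{\ominus}_{B'}(z)$, a coprime product by Remark~\ref{r: +-}. Writing $f^{(k)}=gh$ and substituting $y=\lambda_{B'}+z\underline x^{h(B')}$ gives $G_{B'}(z)H_{B'}(z)=\mathrm{const}\cdot F_{B'}^{(k)}(z)$, so every root of $G_{B'}$ is a root of either ${\cal F}^{\oplus}_{B'}$ or ${\cal F}^{\ominus}_{B'}$. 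Theorem~\ref{pack}(2) forbids the first possibility, while Kuo-Lu $k$-regularity of $f$ forbids a root of ${\cal F}^{\ominus}_{B'}$ from being a root of $F_{B'}$; combining these excludes any common root of $G_{B'}$ and $F_{B'}$.

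For statement~(3), the key claim is that no Newton-Puiseux root of $g$ lies in any $B'\in T(f)\setminus T_k(f)$ of finite height. In one variable, by Theorem~\ref{higher-Kuo-Lu}(iii) every root $\beta$ of $f^{(k)}$ belongs to $B^{\circ}$ for a unique $B\in T_k(f)$, and under $k$-regularity Theorem~\ref{higher-Kuo-Lu}(iv) gives $O(\alpha,\beta)=h(B)$ for every $\alpha\in B\cap\Zer f$. If $\beta\in B'$ with $m(B')<k\le m(B)$, both bars contain $\beta$ and are therefore nested, and $m(B)>m(B')$ forces $B'\subsetneq B$. Letting $B_*$ be the post-bar of $B$ on the path to $B'$ and choosing any $\alpha_*\in B_*\cap\Zer f$, one has $\alpha_*\in B$ together with $O(\alpha_*,\beta)\ge h(B_*)>h(B)$, contradicting Theorem~\ref{higher-Kuo-Lu}(iv).

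For arbitrary $d$ I would apply a monomial substitution $\underline x=u^{\bf r}$. By Lemma~\ref{L:2} the trees $T(f)$ and $T(\bar f^{[{\bf r}]})$ are in bijection preserving $m$ and $t_k$ (hence $T_k$), and $\bar f^{[{\bf r}]}$ inherits Kuo-Lu $k$-regularity. Since $\beta\in B'$ would imply $\bar\beta\in\bar B'$ for every ${\bf r}$, the one-variable case rules it out. Lemma~\ref{subst}(2) applied in one variable then forces $\bar G_{\bar B'}(z)\in\bK\setminus\{0\}$, and I would lift this to $d$ variables by applying Lemma~\ref{R2} (or Corollary~\ref{R1}) to $\tilde g(\underline x)=g(\lambda_{B'}+z\underline x^{h(B')})$: a one-point Newton polygon for every ${\bf r}$ forces a one-point Newton polytope in $\bR^d$, which simultaneously yields the compatibility of $g$ with $B'$ and the constancy of $G_{B'}(z)$. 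This last lifting is the step I expect to require the most care, because the compatibility of an arbitrary irreducible factor of $f^{(k)}$ with a bar $B'\in T(f)\setminus T_k(f)$ is not covered a priori by Lemma~\ref{derivatives} or Corollary~\ref{CCKiel}, and so one has to invoke the Newton-polytope machinery of Section~\ref{section-Newton-polytopes-resultants} to obtain both compatibility and the value of $G_{B'}$ from the family of one-variable reductions.
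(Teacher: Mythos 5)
Your treatment of statements (1) and (2) is correct and is essentially the paper's own argument: (2) is exactly Theorem \ref{pack}(1), and for (1) you combine Lemma \ref{derivatives} (so that $G_{B'}$ divides ${\cal F}^{\oplus}_{B'}{\cal F}^{\ominus}_{B'}$ up to a constant), Theorem \ref{pack}(2) (no common roots with ${\cal F}^{\oplus}_{B'}$) and Kuo-Lu $k$-regularity (no common roots of ${\cal F}^{\ominus}_{B'}$ with $F_{B'}$).

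For statement (3) there is a genuine gap precisely at the step you flagged. The inference ``a one-point Newton polygon for every ${\bf r}$ forces a one-point Newton polytope in $\bR^d$'' is empty: in one variable the Newton polytope of \emph{any} nonzero fractional power series is a translated half-line, so the hypothesis is automatically satisfied and carries no information. (A series such as $x_1+x_2$ has nonzero constant lowest coefficient after every monomial substitution, yet its Newton polytope in $\bR^2$ is not that of a monomial; so this pattern of argument cannot by itself deliver compatibility.) To invoke Lemma \ref{R2} or Corollary \ref{R1} you must first exhibit a \emph{fixed} polytope $\Delta$ with $\Delta\bigl(\bar g^{[{\bf r}]}(\lambda_{\bar B'}+zu^{h(\bar B')})\bigr)=\pi_{\bf r}(\Delta)$ for all ${\bf r}$, i.e.\ prove that the $u$-order of these reductions is linear in ${\bf r}$; since the roots of $g$ need not be fractional power series in $\underline{x}$ (polar factors need not be quasi-ordinary), this linearity does not follow from your one-variable root count. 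The paper avoids the whole reduction: taking the chain $B_0\perp_c B_1\perp\cdots\perp B_s=B'$ with $B_0\in T_k(f)$ and $B_i\notin T_k(f)$ for $i\geq 1$, the $k$-regularity of $F_{B_0}$ gives $G_{B_0}(c)\neq 0$, because $c$ is a root of $F_{B_0}$ of multiplicity $m(B_1)<k$ and $G_{B_0}$ divides $F_{B_0}^{(k)}$ up to a constant; since $\lambda_{B'}+z\underline{x}^{h(B')}$ lies in $B_0$ with leading coefficient $c$ with respect to $B_0$, Remark \ref{tt2} and equation (\ref{eq:F,q1xxx}) of Lemma \ref{subst} give $g(\lambda_{B'}+z\underline{x}^{h(B')})=G_{B_0}(c)\underline{x}^{q(g,B_0)}+\cdots$ directly in $d$ variables, establishing in one stroke both that $g$ is compatible with $B'$ and that $G_{B'}(z)$ is the nonzero constant $G_{B_0}(c)$. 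You should replace your reduction-and-lifting scheme for (3) by this direct substitution through the last bar of $T_k(f)$ above $B'$.
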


\noindent \begin{proof} 
Take $B'\in T_k(f)$. Then by Lemma~\ref{derivatives} and the definition of $k$-regularity 
$G_{B'}(z)$ and ${\cal F}_{B'}^{\ominus}(z)$ do not have common roots. 
Hence for $B'\in T_k(f)$  it is enough to use Theorem~\ref{pack}. This proves
{\em 1.}   The second statement is  the first item of Theorem~\ref{pack}. 

\medskip
\noindent
Now let $B'\in T(f)\setminus T_k(f)$.
Consider the chain of bars  $B_0\perp_c B_1\perp \cdots \perp B_s=B'$  
of $T(f)$ such that $B_0\in T_k(f)$ and  $B_i\notin T_k(f)$ for $1\leq i \leq s$. 
By the $k$-regularity of $F_{B_0}(z)$, we get $G_{B_0}(c)\neq 0$. Since $g$ is compatible with $B_0$, after (\ref{eq:F,q1xxx}) of Lemma \ref{subst}, we have
\[g(\lambda_{B'}(\underline{x})+z\underline{x}^{h(B')})=g(\lambda_{B_0}(\underline{x})+c\underline{x}^{h(B_0)}+\cdots) = G_{B_0}(c)\underline{x}^{q(g,B_0)}+\cdots,\]
which shows that $g$ is also compatible with $B'$ and its $B'$-characteristic polynomial $G_{B'}(z)$ equals $G_{B_0}(c)$.\end{proof}

\section{Eggers factorizations of higher derivatives}
\label{section-Eggers factorization}
\medskip
\noindent 
Let $f$ be a quasi-ordinary Weierstrass polynomial. 
In this section we propose a factorization of the normalized derivative $f^{(k)}$ 
into factors associated with points of Eggers tree $E(f)$.  

\medskip

\begin{Definition}  Let $g,p\in \bK[[\underline{x}]][y]$ be Weierstrass polynomials. 
 The P-contact between $g$ and $p$ is 
\[
\cont_{\rm P}(g,p):=\frac{1}{\deg g \deg p}\Delta(\Res_y(g,p)).
\] 
\end{Definition}

\noindent The notion of P-contact has its counterpart in the theory of plane analytic curves: for $y$-regular plane branches it is related with the {\em logarithmic distance} studied by P\l oski in \cite{Ploski}, since in such case $\Delta(\Res_y(g,p))$ equals the Newton polygon of a monomial $x^m$, where $m$ is  the {\it intersection multiplicity} of the branches $g=0$ and $p=0$.

\medskip
\noindent If $g$ is compatible with a pseudo-ball $B$ then we put 
\[
\cont_{\rm P}(g,B):=\frac{1}{\deg g}\Delta(\underline x^{q(g,B)}).
\]

\begin{Proposition}\label{self-contact}
Let $B$ be a quasi-ordinary pseudo-ball of finite height and let $f$ be an irreducible quasi-ordinary Weierstrass polynomial 
compatible with $B$ such that $\Zer f \cap B\neq \emptyset$ or equivalently 
such that $F_B(z)$ has  positive degree. 
Then $\cont_{\rm P}(f,B)$ does not depend on $f$. 
\end{Proposition}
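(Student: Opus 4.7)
The plan is to choose any $\alpha\in B\cap\Zer f$ and to derive an explicit formula for $q(f,B)/\deg f$ in which only data intrinsic to~$B$ appear.

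Since $\alpha\in B$, the truncation of $\alpha$ at terms of order strictly less than $h(B)$ equals $\lambda_B$; combined with the strong triangle inequality this gives $\min(O(\lambda_B,\alpha'),h(B))=\min(O(\alpha,\alpha'),h(B))$ for every $\alpha'\in\Zer f$. The computation in the proof of Lemma~\ref{subst}(2) (in which the compatibility hypothesis plays the role of $B\in\tilde T(f)$) then yields formula~\eqref{eq:F,q3}, which I rewrite as
\[
q(f,B)\;=\;m\,h(B)\;+\sum_{\alpha'\in\Zer f\setminus B}O(\alpha,\alpha'),\qquad m:=\sharp(\Zer f\cap B).
\]

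Next I exploit the Galois structure of the irreducible quasi-ordinary $f$. Let $E_1<\cdots<E_g$ be the characteristic exponents of $\alpha$, i.e.\ the distinct values of $O(\alpha,\alpha')$ for a Galois conjugate $\alpha'\neq\alpha$, and set
\[
\nu_s\;:=\;[\bL(\alpha|_{<E_s}):\bL],\qquad s=1,\ldots,g+1,
\]
with the conventions $E_0:=0$, $E_{g+1}:=+\infty$, $\nu_1:=1$, $\nu_{g+1}:=\deg f$. Since $O(\alpha,\sigma(\alpha))\geq E_s$ iff $\sigma$ fixes $\alpha|_{<E_s}$, one has $\sharp\{\alpha'\in\Zer f:O(\alpha,\alpha')\geq E_s\}=\deg f/\nu_s$. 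Let $j$ be such that $E_j<h(B)\leq E_{j+1}$; then $m=\deg f/\nu_{j+1}$ and there are $\deg f/\nu_s-\deg f/\nu_{s+1}$ roots at contact exactly $E_s$ for $s=1,\dots,j$. Collecting the sum and dividing by $\deg f$ gives
\[
\frac{q(f,B)}{\deg f}\;=\;\frac{h(B)}{\nu_{j+1}}\;+\;\sum_{s=1}^{j}\left(\frac{1}{\nu_s}-\frac{1}{\nu_{s+1}}\right)E_s.
\]

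Finally I argue that the right-hand side depends only on $B$. For $s\leq j$ we have $E_s<h(B)$, so $\alpha|_{<E_s}$ is the analogous truncation of $\lambda_B$ and both $E_s$ and $\nu_s$ are read off $\lambda_B$ alone. For $s=j+1$ any terms of $\alpha$ appearing in the range $[h(B),E_{j+1})$ are non-characteristic, i.e.\ their monomials lie in the lattice generated by $\bZ^d$ and $E_1,\ldots,E_j$; adjoining them to $\bL$ therefore does not enlarge $\bL(\lambda_B)$, whence $\nu_{j+1}=N(B)$. All of $h(B),E_1,\ldots,E_j,\nu_1,\ldots,\nu_{j+1}$ are thus invariants of $B$, and since $\cont_{\rm P}(f,B)=q(f,B)/\deg f+\bR_{\geq0}^{d}$, the proposition follows.

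The main obstacle is the identification of the degrees $\nu_s$ and the exponents $E_s$ (for $s\leq j+1$) as intrinsic invariants of $\lambda_B$ rather than of $f$. This rests on the cyclic Galois structure of the successive extensions $\bL(\alpha|_{<E_s})\hookrightarrow\bL(\alpha|_{<E_{s+1}})$ (Lemma~5.7 of~\cite{Lipman}, Remark~2.7 of~\cite{GP}) together with the fact that every truncation of a root of a quasi-ordinary polynomial is itself a root of a quasi-ordinary polynomial (Remark~2.3 of~\cite{GP}), which makes the tower of intermediate subfields intrinsic to $\lambda_B$.
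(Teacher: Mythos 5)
Your argument is correct, but it takes a genuinely different route from the paper's. The paper never computes $q(f,B)/\deg f$: it takes two polynomials $f_1,f_2$ with roots $\alpha_1,\alpha_2\in B$, picks $\gamma=\lambda_B+c\underline{x}^{h(B)}$ with $(F_i)_B(c)\neq0$, lets a finite group $G$ of $\bL$-automorphisms act transitively on both $\Zer f_1$ and $\Zer f_2$, and observes via the strong triangle inequality that $O(\gamma,\sigma(\alpha_1))=O(\gamma,\sigma(\alpha_2))$ for every $\sigma\in G$; the orbit--stabilizer theorem then identifies $\frac{1}{\deg f_i}q(f_i,B)$ with the $G$-average of $O(\gamma,\sigma(\alpha_i))$, and the two averages agree term by term. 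You instead derive a closed formula for $q(f,B)/\deg f$ in terms of $h(B)$, the characteristic exponents of $\lambda_B$ and the degrees $\nu_s$ of the associated tower of field extensions, and then check that all these ingredients are intrinsic to $B$. What your route buys is an explicit expression for the self-contact (essentially the quasi-ordinary analogue of the classical formula for intersection multiplicities of a branch with its truncations), at the price of invoking substantially more of Lipman's structure theory: the total ordering of the characteristic exponents, the identity $\bL(\alpha|_{<E_{s+1}})=\bL(\underline{x}^{E_1},\dots,\underline{x}^{E_s})$, and the lattice condition on non-characteristic exponents needed for $\nu_{j+1}=N(B)$. Two points deserve explicit care in a write-up: first, formula \eqref{eq:F,q3} is stated in Lemma~\ref{subst} only for $d=1$ or $B\in\tilde T(f)$, so you must justify (as you gesture at) that compatibility of $f$ with $B$ forces each factor $\lambda_B+z\underline{x}^{h(B)}-\alpha'$ to have a monomial Newton polytope, which is what makes $O(\lambda_B,\alpha')$ well defined and $\leq h(B)$ for $\alpha'\notin B$ in the merely partial order on $\bQ_{\geq0}^d$; second, your indexing $E_j<h(B)\leq E_{j+1}$ presumes comparability of $h(B)$ with each $E_s$ and should be phrased as $j=\max\{s:E_s\not\geq h(B)\}$. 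Neither is a fatal gap — the paper's own proof makes the analogous implicit assertion $O(\xi,\gamma)\leq h(B)$ — but the paper's Galois-averaging argument sidesteps the explicit combinatorics entirely and is the shorter path.
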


\noindent \begin{proof}
 Take any $f_1$, $f_2$ satisfying the assumptions of the proposition and let 
$\alpha_1 \in \Zer f_1 \cap B$, $\alpha_2 \in \Zer f_2 \cap B$. Choose a constant  $c\in \bK$ such that 
$(F_i)_B(c)\neq 0$,  for $i=1,2$
and let $\gamma =\lambda_B+c\underline x^{h(B)}$. 
Then $O(\gamma,\alpha_1)=O(\gamma,\alpha_2)= h(B)$ 
and for any  $\xi \in \Zer f_1\cup \Zer f_2$ we have $O(\xi,\gamma)\leq h(B)$. 

\medskip

\noindent Let $G$ be a finite subgroup of $\bL$-automorphisms of $\bM$ that acts transitively on the sets $\Zer f_1$ and $\Zer f_2$. By the orbit stabilizer theorem, for $i\in \{1,2\}$ we get 

\[ 
\frac{1}{|G|} \sum_{\sigma\in G} O(\gamma,\sigma(\alpha_i)) = 
     \frac{1}{\deg f_i} \sum_{\alpha\in\Zer f_i} O(\gamma,\alpha) = 
     \frac{1}{\deg f_i}q(f_i,B).
\]

\noindent By STI we have $O(\gamma,\sigma(\alpha_1))=O(\gamma,\sigma(\alpha_2) )$ for all $\sigma\in G$.
Thus  $\frac{1}{\deg f_1}q(f_1,B)= \frac{1}{\deg f_2}q(f_2,B)$.
\end{proof}
 
 \medskip
 \noindent  After Proposition~\ref{self-contact} we define 
 the \emph{self-contact} of a pseudo-ball $B$ of finite height as 
 \[
 \mbox{self-contact}(B):= \cont_{\rm P}(f,B),
 \] 
 
\noindent for any $f$ satisfying the assumptions of this proposition.
\medskip

\noindent By Lemma~\ref{LL:1} conjugate pseudo-balls have the same self-contact, hence the self-contact of $[B]$ is well-defined for any vertex $[B]$ of ${E}(f)$, where $B$ is of finite height. 
 
\medskip
\noindent 
In the set of  Newton polytopes we define the next partial order:  $\Delta_1 \succeq \Delta_2$ if and only if $\Delta_1 \subseteq \Delta_2$.  
Observe that $\Delta(\underline x^{{\bf q}_1})\succeq \Delta(\underline x^{{\bf q}_2})$ if and only if ${\bf q}_1 \geq {\bf q}_2$. Now we show how the self-contacts of $[B]\in E(f)$ determine the P-contacts between irreducible factors of $f$.

\begin{Proposition}\label{s-c1}
Let $f$ be a quasi-ordinary Weierstrass polynomial.
Then the self contacts of vertices of finite height increase along the branches of $E(f)$. Moreover for any different  irreducible factors $f_1$, $f_2$ of $f$
\begin{equation}
\cont_{\rm P}(f_1,f_2) = \max \{\mbox{\rm self-contact}([B]) \} \label{s-c},
\end{equation}
where the maximum is taken over all $[B]\in E(f)$
such that $\Zer f_i \cap B\neq \emptyset$ for $i=1,2$.
\end{Proposition}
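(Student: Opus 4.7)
The plan is to prove the two assertions separately: monotonicity of self-contact along branches of $E(f)$, then the max-formula for $\cont_{\rm P}(f_1,f_2)$. The first part is essentially a direct application of Lemma~\ref{in-chain}. Fix a branch of $E(f)$ ending at a leaf corresponding to some irreducible factor $f_0$, and let $[B]$, $[B']$ be consecutive vertices on this branch with $[B]$ closer to the root. Choose representatives with $B\perp B'$; since the branch passes through $[B']$ we have $\sharp(\Zer f_0\cap B')\geq 1$, so Lemma~\ref{in-chain} yields
\[
q(f_0,B')-q(f_0,B) = \sharp(\Zer f_0\cap B')\,[h(B')-h(B)] \in \bQ^d_{\geq 0},
\]
and dividing by $\deg f_0$ gives $\mbox{self-contact}([B'])\succeq\mbox{self-contact}([B])$.

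For the max-formula, let $[B_0]\in E(f)$ denote the deepest common vertex of the two branches corresponding to $f_1$ and $f_2$. The set of $[B]$ satisfying the hypothesis of the proposition is exactly the common initial segment from the root to $[B_0]$, so by monotonicity the maximum of self-contact on this set is $\mbox{self-contact}([B_0])$, and it suffices to prove $\cont_{\rm P}(f_1,f_2)=\mbox{self-contact}([B_0])$. Start from the standard resultant product
\[
\Res_y(f_1,f_2) = \pm\!\!\prod_{\alpha\in\Zer f_1,\,\beta\in\Zer f_2}\!\!(\alpha-\beta) = U\,\underline{x}^N,
\]
where $U$ is a unit in $\bK[[\underline{x}^{1/\bN}]]$ and $N=\sum_{\alpha\in\Zer f_1}\ord f_2(\alpha)$. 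Since $\Res_y(f_1,f_2)\in\bK[[\underline{x}]]$, the exponent $N$ is integral and $\Delta(\Res_y(f_1,f_2))=\Delta(\underline{x}^N)$. A Galois invariance argument via Lemma~\ref{propp:1}(3) applied to $\bL$-automorphisms permuting $\Zer f_1$ shows that $\ord f_2(\alpha)$ is independent of $\alpha\in\Zer f_1$, so $N=\deg f_1\cdot \ord f_2(\alpha^0)$ for any chosen root $\alpha^0$.

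Pick $\alpha^0\in\Zer f_1\cap B_0$. The key observation is that no post-bar of $B_0$ in $T(f)$ contains roots of both $f_1$ and $f_2$: otherwise such a post-bar would determine a common vertex of $E(f)$ strictly above $[B_0]$, contradicting maximality. Hence the post-bar of $B_0$ containing $\alpha^0$ carries no root of $f_2$, so $\lc_{B_0}(\alpha^0)\neq\lc_{B_0}(\beta)$ for every $\beta\in\Zer f_2\cap B_0$. By Lemma~\ref{subst}(2) the numbers $\lc_{B_0}(\beta)$ are precisely the roots of $F_{2,B_0}(z)$, so $F_{2,B_0}(\lc_{B_0}(\alpha^0))\neq 0$, and Lemma~\ref{subst}(1) gives $\ord f_2(\alpha^0)=q(f_2,B_0)$. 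Therefore
\[
\cont_{\rm P}(f_1,f_2) = \tfrac{1}{\deg f_2}\Delta(\underline{x}^{q(f_2,B_0)}) = \mbox{self-contact}([B_0]),
\]
the last equality being Proposition~\ref{self-contact}. The main obstacle is the key observation about post-bars: showing that at the deepest common vertex the roots of $f_1$ and $f_2$ already separate into distinct post-bars. Everything else is straightforward bookkeeping with the resultant, Galois orbits, and the partial order on Newton polytopes.
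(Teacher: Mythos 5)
Your proof is correct and follows essentially the same route as the paper's: monotonicity via Lemma \ref{in-chain}, and the max-formula by writing $\Res_y(f_1,f_2)$ as a product over roots, using Galois transitivity on $\Zer f_1$ together with Lemma \ref{propp:1} to equalize the orders $\ord f_2(\alpha)$, and then evaluating at a root of $f_1$ in $B_0$ whose leading coefficient is not a root of $(F_2)_{B_0}$. Your "no post-bar of $B_0$ meets both $\Zer f_1$ and $\Zer f_2$" observation is just a rephrasing of the paper's choice of $\gamma\in\Zer f_1$, $\delta\in\Zer f_2$ with maximal contact, so the two arguments coincide.
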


\noindent \begin{proof}
Let $B$, $B'$ be pseudo-balls  of $T(f)$ of finite height such that $B'\subsetneq B$. Choose an irreducible factor $f_i$ of $f$ such that 
$\Zer f_i \cap B'\neq \emptyset$. By Lemma \ref{in-chain} we get $q(f_i,B)< q(f_i,B')$, hence $\mbox{self-contact}(B)\prec \mbox{self-contact}(B')$.

\medskip

\noindent Let $[B]\in E(f)$ be the maximum (with the order defined in $E(f)$) of the set of all vertices $[B']\in E(f)$ such that $\Zer f_i \cap B'\neq \emptyset$ for $i=1,2$. The pseudo-ball $B$ has the form $\gamma+(\gamma-\delta)\bK[[\underline{x}^{1/\bN}]]$, for some $\gamma \in \Zer f_1$ and $\delta \in \Zer f_2$ with maximal possible contact. By the choice of $\gamma$ and $	\delta$,   we have $O(\gamma,\delta')\leq h(B)$ for all $\delta'\in\Zer f_2\cap B$, consequently $(F_2)_{B}(\lc_B \gamma)\neq 0$. Then $f_2(\gamma)=(F_2)_B(\lc_B \gamma)\underline{x}^{q(f_2,B)}+\cdots$. 

\medskip

\noindent Applying the Galois action associated with the irreducible polynomial $f_{2}$ we get $\Delta({f_2(\gamma)})=\Delta({f_2(\gamma')})$, for any $\gamma,\gamma' \in \Zer f_{1}$. Hence by the definition of the self-contact and the identity
$\Delta(\Res_y(f_1,f_2))=\sum_{\gamma \in \Zer f_{1}}\Delta({f_2(\gamma)})$ we have
\begin{eqnarray*}
\mbox{self-contact}(B)&=&\cont_{\rm P}(f_2,B)=\frac{1}{\deg f_2}\Delta(\underline x^{q(f_2,B)})\\
&=& \frac{1}{\deg f_1 \deg f_2}\deg f_1\;\Delta({f_2(\gamma)})\\
&=&\frac{1}{\deg f_1 \deg f_2}\Delta(\Res_y(f_1,f_2))=\cont_{\rm P}(f_1,f_2).
\end{eqnarray*}
\end{proof}

\begin{Theorem}
\label{dec-red-qo}
Let $f \in \bK[[\underline{x}]][y]$  be a quasi-ordinary Weierstrass polynomial.
Then 
\[f^{(k)}=\prod_{[B]\in E(f)}p_{[B]},\] 
where  $p_{[B]}$ are Weierstrass polynomials such that
\begin{enumerate}
\item The $B$-characteristic polynomial of $p_{[B]}$ equals $F_B^{\ominus}$ 
up to multiplication by constants  and $\deg p_{[B]}=N(B)t_k(B)$. 
\item For every irreducible factor $g$ of $p_{[B]}$ and every irreducible factor $f_i$ of~$f$ we get
\begin{enumerate}
\item[(a)] $\cont_{\rm P}(g,B)=\mbox{\rm self-contact}(B).$

\item[(b)] If $\cont_{\rm P}(f_i,B)\prec \mbox{\rm self-contact}(B)$ then $\cont_{\rm P}(f_i,g)=\cont_{\rm P}(f_i,B)$.

\item[(c)] If $  \cont_{\rm P}(f_i,B)=\mbox{\rm self-contact}(B)$ then 
   $\cont_{\rm P}(f_i,g)\succeq \cont_{\rm P}(f_i,B)$.
\end{enumerate}

\item If $f$ is $k$-regular then the inequalities $\succeq$ in (c) become equalities.

\item For every irreducible factor $g$ of $p_{[B]}$ there is an irreducible factor $f_i$ of $f$ such that 
$\cont_{\rm P}(f_i,g) = \cont_{\rm P}(f_i,B)= \mbox{\rm self-contact}(B).$

\end{enumerate}
\end{Theorem}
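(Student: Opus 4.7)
The plan is to define $p_{[B]}$ as the product of all irreducible factors of $f^{(k)}$ which, by Theorem~\ref{pack}, are assigned to the vertex $[B]\in E(f)$. This yields the factorization $f^{(k)} = \prod_{[B]\in E(f)} p_{[B]}$ at once, and by Corollary~\ref{CCKiel} each $p_{[B]}$ is compatible with every finite-height bar of $T(f)$. For property~1, I will use multiplicativity of $B$-characteristic polynomials (a consequence of Minkowski additivity of Newton polytopes, see Lemma~\ref{L:compatibility}) together with Lemma~\ref{derivatives} to write $F_B^{(k)} = \prod_{[B']} (p_{[B']})_B$ up to a non-zero constant. By Theorem~\ref{pack}, $(p_{[B]})_B$ is coprime with ${\cal F}_B^\oplus$ while the complementary product has all its roots among those of ${\cal F}_B^\oplus$; since $\gcd({\cal F}_B^\oplus,{\cal F}_B^\ominus)=1$ and their product equals $F_B^{(k)}$, uniqueness of factorization in $\bK[z]$ forces $(p_{[B]})_B = c\cdot{\cal F}_B^\ominus$ for some $c\in\bK\setminus\{0\}$. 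The degree formula then follows by observing that, by the Galois-orbit argument used in the proof of Theorem~\ref{pack}, each irreducible factor $g\mid p_{[B]}$ satisfies $\deg G_B = \deg g / N(B)$; summing over all irreducible factors of $p_{[B]}$ and matching with $\deg{\cal F}_B^\ominus = t_k(B)$ yields $\deg p_{[B]} = N(B)\,t_k(B)$.

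For properties~2(a)--(c) the key identity is
\[
\Res_y(f^{(k)}, f_i - T) = \prod_{[B']}\Res_y(p_{[B']}, f_i - T).
\]
Theorem~\ref{higher-discriminants} asserts that the Newton polytope of the left-hand side is the polygonal sum~(\ref{jac_Nd}), and Corollary~\ref{coro:polygonal} together with Theorem~\ref{Th:decomp} decompose the right-hand side into Minkowski sub-sums of the elementary pieces of~(\ref{jac_Nd}); matching inclinations assigns each elementary piece indexed by $B'$ to the factor $p_{[B']}$ corresponding to the sub-tree of $T(f)$ where $B'$ lies. Specializing $T=0$ reads off $\cont_{\rm P}(g,f_i)$ in terms of $q(f_i,B')$, from which (a), (b) and (c) follow: (a) is the case $B'=B$ with $\Zer f_i\cap B\neq\emptyset$, giving $\cont_{\rm P}(g,B) = $ self-contact$(B)$ via Proposition~\ref{self-contact} and the observation that $q(g,B)/\deg g = q(f_i,B)/\deg f_i$ whenever $f_i$ is an irreducible factor of $f$ with a root in $B$; (b) is the case $\cont_{\rm P}(f_i,B)\prec$ self-contact$(B)$, where $f_i$ has no roots in $B$ and Lemma~\ref{in-chain} applied along any chain from $B$ to $B'$ forces $q(f_i,B') = q(f_i,B)$ exactly; (c) is the remaining case, where $f_i$ does have a root in some conjugate of $B$ and $f_i,g$ may share additional contact inside a common pseudo-ball, yielding only the inclusion $\succeq$.

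For property~3, under the $k$-regular hypothesis Corollary~\ref{pack1}(3) asserts that $G_{B'}(z)$ is a non-zero constant for every $B'\in T(f)\setminus T_k(f)$; this forbids any additional shared contact above $h(B)$ and upgrades the inclusion in 2(c) to equality. For property~4, take $c$ to be a root of ${\cal F}_B^\ominus$ which is also the leading coefficient at $B$ of an irreducible factor $g$ of $p_{[B]}$, and let $f_i$ be the irreducible factor of $f$ containing $\lambda_B + c\underline{x}^{h(B)} + \cdots$ as a root; then $\cont_{\rm P}(f_i,B) = $ self-contact$(B)$ by construction, and Proposition~\ref{s-c1} combined with 2(c) gives $\cont_{\rm P}(f_i,g) = \cont_{\rm P}(f_i,B)$. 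The main technical obstacle throughout is that an irreducible factor $g\mid p_{[B]}$ need not itself be quasi-ordinary (Theorem~\ref{pack} guarantees quasi-ordinarity only when $l=1$), so contact computations for $g$ cannot be performed directly with Newton--Puiseux roots; instead one must pass through generic monomial substitutions (Section~\ref{section-Newton-polytopes-resultants}) and use Corollary~\ref{R1} to transfer Newton-polytope identities back to the multivariate setting.
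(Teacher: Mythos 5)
Your definition of $p_{[B]}$ and your argument for statement \emph{1} follow the paper's route and are sound (in fact you spell out the coprimality argument identifying $(P_{[B]})_B$ with ${\cal F}^{\ominus}_B$ more explicitly than the paper does). The trouble begins with statements \emph{2} and \emph{4}. For \emph{2(a)--(c)} you invoke Theorem~\ref{higher-discriminants} to assert that $\Delta(\Res_y(f^{(k)},f_i-T))$ equals the polygonal sum~(\ref{jac_Nd}); but that theorem assumes $f$ is Kuo-Lu $k$-regular, a hypothesis absent from Theorem~\ref{dec-red-qo}. In the non-regular case the polytope can differ from~(\ref{jac_Nd}) --- this is precisely why \emph{2(c)} is only an inequality and why statement \emph{3} is a separate claim --- so the inclination-matching argument collapses exactly where it is needed. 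The paper argues directly instead: after an arbitrary monomial substitution every root $\bar\gamma$ of $\bar p_{[B]}$ lies in $\bar{B'}^{\circ}$ for some $B'\in[B]$, and $\ord\bar f_i(\bar\gamma)$ equals $q(\bar f_i,\bar B)$ when $(F_i)_{B'}(\lc_{\bar B'}\bar\gamma)\neq 0$ (in particular whenever $(F_i)_{B'}$ is a nonzero constant, which is the content of \emph{2(b)}) and exceeds it otherwise; this yields \emph{2(b)}, \emph{2(c)} and \emph{3} with no regularity assumption. Your \emph{2(a)} is moreover circular: the ``observation'' $q(g,B)/\deg g=q(f_i,B)/\deg f_i$ is essentially the statement to be proved, and Proposition~\ref{self-contact} cannot be applied to $g$, which need not be an irreducible quasi-ordinary polynomial with a root in $B$.

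The argument for statement \emph{4} is wrong as written. You take $c$ a root of ${\cal F}^{\ominus}_B$ and ``the irreducible factor $f_i$ of $f$ containing $\lambda_B+c\underline{x}^{h(B)}+\cdots$ as a root''. If $F_B$ is $k$-regular then ${\cal F}^{\ominus}_B$ and $F_B$ have no common roots, so no root of $f$ in $B$ has leading coefficient $c$ and such an $f_i$ does not exist; and if it did exist, $(F_i)_B$ and $G_B$ would share the root $c$, which forces $\ord\bar f_i(\bar\gamma)>q(\bar f_i,\bar B)$ for the corresponding roots of $\bar g$ and hence $\cont_{\rm P}(f_i,g)\succ\cont_{\rm P}(f_i,B)$ --- the opposite of the equality required (and ``Proposition~\ref{s-c1} combined with 2(c)'' only yields $\succeq$ in any case). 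The correct choice is an $f_i$ whose characteristic polynomial $(F_i)_B$ has \emph{positive degree} and is \emph{coprime} with $G_B$; Lemma~\ref{power} guarantees that such a factor exists, treating separately the case $F_B(z)=a(z^{n(B)}-c)^{e}$, which is $k$-regular by Lemma~\ref{AL}. With that $f_i$ one gets $\ord\bar f_i(\bar\gamma)=q(\bar f_i,\bar B)$ for every root of $\bar g$ and every monomial substitution, hence statement \emph{4}; statement \emph{2(a)} then follows by computing $\Delta(\Res_y(f_i,g))=\deg f_i\cdot\Delta(\underline{x}^{q(g,B)})$ along the Galois orbit of a root of $f_i$ lying in $B$ and comparing with the definition of the P-contact.
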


\noindent \begin{proof}
We define $p_{[B]}$ as the  product of all irreducible factors of $f^{(k)}$ 
having the same $[B]$ in Theorem~\ref{pack} (by convention the product of an empty family is $1$).
After some monomial substitution  $\bar p_{[B]}$ has $N(B)t_k(B)$  roots 
and all of them are in $\bigcup_{B'\in[B]}\bar{B'}^0$. 
Consequently  $\deg p_{[B]}=N(B)t_k(B)$.\\

\noindent Now we will prove the second statement. Since $p_{[B]}$ has positive degree we may assume that $B\in T_{k}(f)$.
Let $f_i$ be an irreducible factor of $f$.
If $\cont_{\rm P}(f_i,B)\prec \mbox{ self-contact}(B)$ then by Proposition \ref {self-contact} $(F_i)_B(z)$ is a non-zero constant polynomial. Hence, for any $\bar\gamma\in \Zer \bar p_{[B]}$ we have $\ord\bar{f_i}(\bar{\gamma})=q(\bar f_i,\bar B)$, which proves {\em 2(b)}.
Suppose now that $\cont_{\rm P}(f_i,B)=\mbox{ self-contact}(B)$. For every root $\bar\gamma\in \Zer \bar p_{[B]}$ we
have $\ord\bar{f_i}(\bar{\gamma})\geq q(\bar f_i,\bar B)$ with equality in the $k$-regular case. Hence  if $g$ is an irreducible factor of $p_{[B]}$ then 
$\ord \Res_y(\bar f_i, \bar g )\geq (\deg g)\cdot q(\bar f_i,\bar B)$ with equality in the $k$-regular case. This gives  {\em 2(c)} and {\em 3}. \\

\medskip

\noindent If the polynomial $F_{B}(z)$ is as in \eqref{eq:F irr} then it is $k$-regular. In this case for any irreducible factor $f_i$ of $f$, with   $(F_i)_B(z)$ of  positive degree, the polynomials $(F_i)_B(z)$ and $G_B(z)$  do not have common factors. 

\medskip
\noindent If  $F_{B}(z)$ is not  as in \eqref{eq:F irr},  then by Lemma \ref{power} there is an irreducible factor $f_i$ of $f$ such that 
the polynomials $(F_i)_B(z)$ and $G_B(z)$  do not have common factors and $(F_i)_B(z)$ has positive degree. 
After any monomial substitution, we have 
$\ord \bar f_i(\bar\gamma)=q(\bar f_i,\bar B)$, 
for every $\bar\gamma \in \Zer \bar g$. 
This gives $\ord \Res(\bar f_i,\bar g)=\deg g \cdot q(\bar f_i,\bar B)$.
Since the monomial substitution was arbitrary, 
the fourth statement of the theorem holds true in all cases. 

\medskip
\noindent It rests to prove {\em 2(a)}. Choose $f_i$ as in the proof of the fourth statament. Then $\Delta(g(\alpha))=\Delta(\underline x^{q(g,B)})$ for any $\alpha \in B\cap \Zer f_i$.

\medskip
\noindent Applying the same argument as in the end of the proof of Proposition \ref{s-c1}, we get $\Delta(\Res_y(f_i,g))=\deg f_{i}\Delta(g(\alpha))=\deg f_i\cdot \Delta(\underline x^{q(g,B)})$. After the fourth statement and the definition of the P-contact: $\mbox{self-contact}(B)=\cont_{\rm P}(f_i,g)=\frac{1}{\deg g\deg f_i}\Delta(\Res_y(f_i,g))=
\frac{1}{\deg g}\Delta(\underline x^{q(g,B)})=\cont_{\rm P}(g,B).$
\end{proof}

\begin{Example}
We consider  the example in \cite[Section 10]{GB-GP}: let $f=f_{1,1}f_{1,2}f_{2,1}f_{2,2}$, 
where $f_{i,j}=(y^2-ix_1^{3}x_2^2)^2-jx_1^{5}x_2^{4}y$ 
are irreducible quasi-ordinary polynomials for $i,j\in\{1,2\}$. The Kuo-Lu and the  Eggers tree of $f$ are

\begin{center}
\begin{tikzpicture}[scale=0.8]
\draw [-, thick](0,0) -- (0,1); 
\draw[-, thick](-3,1) -- (3,1); 
\draw[-, thick](-3,1) -- (-3,2); 
\draw[-, thick](3,1) -- (3,2); 
\draw[-, thick](-1.5,1) -- (-1.5,2); 
\draw[-, thick](1.5,1) -- (1.5,2); 
\draw[-, thick](-3.5,2) -- (-2.5,2); 
\draw[-, thick](-2,2) -- (-1,2);
\draw[-, thick](1,2) -- (2,2);
\draw[-, thick](2.5,2) -- (3.5,2);
\draw[-, thick](-3.5,2) -- (-3.5,3);
\draw[-, thick](-3.2,2) -- (-3.2,3);
\draw[-, thick](-2.8,2) -- (-2.8,3);
\draw[-, thick](-2.5,2) -- (-2.5,3);

\draw[-, thick](-1,2) -- (-1,3);
\draw[-, thick](-1.3,2) -- (-1.3,3);
\draw[-, thick](-1.7,2) -- (-1.7,3);
\draw[-, thick](-2,2) -- (-2,3);

\draw[-, thick](1,2) -- (1,3);
\draw[-, thick](1.3,2) -- (1.3,3);
\draw[-, thick](1.7,2) -- (1.7,3);
\draw[-, thick](2,2) -- (2,3);

\draw[-, thick](2.5,2) -- (2.5,3);
\draw[-, thick](2.8,2) -- (2.8,3);
\draw[-, thick](3.2,2) -- (3.2,3);
\draw[-, thick](3.5,2) -- (3.5,3);

 \begin{scope}[shift={(6.5,1)},scale=1]
\draw [thick](0,0) -- (-1.5,2); 
\draw[thick](0,0) -- (1.5,2); 
\draw [thick](0.8,1) -- (0.5,2); 
\draw[thick](-0.8,1) -- (-0.5,2) ; 
\draw (0,-0.1) node[below]{$[B_1]$};
\draw (-0.9,1) node[left]{$[B_2]$};
\draw (0.9,1) node[right]{$[B_3]$};
\draw (-1.5,2.1) node[above]{$f_{1,1}$};
\draw (-0.5,2.1) node[above]{$f_{1,2}$};
\draw (0.5,2.1) node[above]{$f_{2,1}$};
\draw (1.5,2.1) node[above]{$f_{2,2}$};
\node[draw,circle,inner sep=3pt,fill=black] at (0.8,1) {};
\node[draw,circle,inner sep=3pt,fill=black] at (-0.8,1) {};
\node[draw,circle,inner sep=3pt,fill=black] at (0,0) {};
\node[draw,circle,inner sep=3pt,fill=white] at (1.5,2) {};
\node[draw,circle,inner sep=3pt,fill=white] at (0.5,2) {};
\node[draw,circle,inner sep=3pt,fill=white] at (-0.5,2) {};
\node[draw,circle,inner sep=3pt,fill=white] at (-1.5,2) {};
\end{scope}
\end{tikzpicture}
\end{center}

\noindent The heights of the vertices of the Eggers tree are:
$h[B_1]=\left(\frac{3}{2},1\right)$, $h([B_2])=h([B_3])=\left(\frac{7}{4},\frac{3}{2}\right)$;  the self-contacts are $\hbox{\rm self-contact}([B_1])=\frac{1}{4}\Delta(\underline x^{(6,4)})$ and $\hbox{\rm self-contact}([B_2])=\hbox{\rm self-contact}([B_3])=\frac{1}{4}\Delta\left(\underline x^{(13,10)}\right)$.

\medskip

\noindent  For any $1\leq k \leq 16$, the degrees of polynomials $p_{[B_i]}$  are

\begin{center}
\begin{tabular}{|c|c|c|c|c|}
 \hline
&$\deg p_{[B_1]}$ & $\deg p_{[B_2]}$ & $\deg p_{[B_3]}$ \\ 
\hline \hline
$f^{(1)}$ & 3 &  6 & 6 \\
\hline
$f^{(2)}$ & 6 &  4 & 4 \\
\hline
$f^{(3)}$ & 9 &  2 & 2 \\
\hline 
$f^{(k)}$ & 16-$k$ &  0 & 0 \\
\hline 
\end{tabular}
\end{center}

\noindent The characteristic polynomials are $F_{B_1}(z)=(z^2-1)^4(z^2-2)^4$,
$F_{B_2}(z)=(4z^2-1)(4z^2-2)$ and 
$F_{B_3}(Z)=(8z^2-\sqrt{2})(8z^2-2\sqrt{2})$. We can verify that these polynomials are $k$-regular for any $k$. 

\noindent Theorem \ref {dec-red-qo} allows us to compute the {\rm P-contact} between the irreducible factors of $f$ and the irreducible factors of its higher order polars. For any $k$ and any irreducible factor $g$ of $p_{[B_{1}]}$, we have
$ \cont_{\rm P}(f_{i,j},g)=\hbox{\rm self-contact}([B_1])$. For any $k$ and any irreducible factor $g$ of $p_{[B_{2}]}$, we have
$ \cont_{\rm P}(f_{1,j},g)=\hbox{\rm self-contact}([B_2])$ and $ \cont_{\rm P}(f_{2,j},g)=\hbox{\rm self-contact}([B_1])$, for any $j=1,2$. We have the symmetric situation for the irreducible factors of $p_{[B_{3}]}$.

\end{Example}

\begin{Example}
The second polar of the quasi-ordinary polynomial $f$ from the Example \ref{ex:KL} (see page \pageref{Egg ex} for its Eggers tree) has only one Eggers factor $p_{[B]}=y$, with 
$\cont_{\rm P}(f_2,y)=\Delta (\underline x^{(5,2)})\succ \cont_{\rm P}(f_1,y)=\Delta (\underline x^{(3/2,1)})= \mbox{\rm self-contact}(B),$ hence in item 2. (c) of Theorem \ref{dec-red-qo} we have equality for $f_{1}$ and strict inequality for $f_{2}$.
\end{Example}

\noindent Now we study the examples of \cite{Casas}:

\begin{Example} (\cite[Example 5.1]{Casas})
Let $f=y^3+x^2y$.
The Eggers tree of $f$ has only one vertex $[B]$ of finite height, where $B=x\bK[[x^{1/\bN}]]$. The $B$-characteristic polynomial of $f$ is as in the previous example, so it is not $2$-regular.
We get $f^{(2)}=p_{[B]}=y$. If $f_{1}=y-x$, $f_{2}=y+x$ and $f_{3}=y$ then
$\emptyset=\cont_{\rm P}(f_3,y)\succeq \cont_{\rm P}(f_i,y)=\cont_{\rm P}(f_i,B)=\Delta(x)=\mbox{\rm self-contact}(B)$ for $i=1,2$. This illustrates the fourth statement of Theorem \ref{dec-red-qo}.
\end{Example}

\begin{Example} (\cite[Example 5.2]{Casas})
Let $f_a=y^4+ax^2y^2+x^2y+x^{10}$. We get $f_a=f_{a1}f_{a2}$, where  $f_{a1}$ is irreducible and the contact of any two  different roots of it is $\frac{2}{3}$, and $f_{a2}=0$ is a smooth curve tangent to $y=0$. The Eggers tree of $f_a$ is:

\begin{center}
\begin{tikzpicture}[scale=0.5]
\draw [thick](0,0) -- (-1.5,2); 
\draw[thick, dashed](0,0) -- (1.5,2);
\draw (0,-0.1) node[below]{$[B]$};
\draw (-1.5,2.2) node[above]{$f_{a1}$};
\draw (1.5,2.2) node[above]{$f_{a2}$};
\node[draw,circle,inner sep=3pt,fill=black] at (0,0) {};
\node[draw,circle,inner sep=3pt,fill=white] at (1.5,2) {};
\node[draw,circle,inner sep=3pt,fill=white] at (-1.5,2) {};
\end{tikzpicture}
\end{center}

\noindent The characteristic polynomial  $F_B(z)$ equals $z^4+z$. Hence 
$f_a$ is not $2$-regular. For any irreducible factor $g$ of $f^{(2)}_a$
we get  $\cont_{\rm P}(f_{a1},g)=\Delta(x^{2/3})=\mbox{ self-contact}(B)$. For $f_{a2}$ the P-contact depends on $a$:

\[
\cont_{\rm P}(f_{a2},g)=\left\{\begin{array}{ll}
\Delta(x) &  
\hbox{\rm for $a\neq 0$}\\
\Delta(x^{8})  & \hbox{\rm for $a=0$.}
\end{array}
\right.
\]
\end{Example}

\medskip

\subsection{Irreducible case}

\noindent Assume that $f(y)\in \bK[[\underline{x}]][y]$  is an irreducible
quasi-ordinary Weierstrass polynomial of degree $n>1$ and $\Zer f=\{\alpha_i\}_{i=1}^n$. 
By \cite{Lipman} the set $\{O(\alpha_i,\alpha_j)\,:\, i\neq j\}:=\{{\bf h}_1,\ldots, {\bf h}_s\}$ is well-ordered, so we may assume that  
${\bf h}_1\leq {\bf h}_2 \leq \cdots \leq {\bf h}_s$.  These values are the
finite heights of the bars of $T(f)$.
The sequence ${\bf h}_1,\ldots, {\bf h}_s$ is called the sequence of {\it characteristic exponents} of $f(y)$. Let $B_i$ be any bar in $T(f)$ of height ${\bf h}_i$. By \cite[Remark 2.7]{GP} the degree $n(B_i)$ of the field extension $\bL(\lambda_{B_i}(\underline{x})) \hookrightarrow \bL(\lambda_{B_i}(\underline{x}), \underline{x}^{{\bf h}_i})$ does not depend on the choice of $B_i$ and will be denoted by $n_i$. Put $e_i:=n_{i+1}\cdots n_s$ for $0\leq i \leq s$ (by convention the empty product is one).
Observe that $T(f)$ has a special structure: all bars of the same height are conjugate and there are $n_1\cdots n_{i-1}$ conjugate bars of height~${\bf h}_i$ (see \cite[Theorem 6.2]{IMRN}).

\medskip
\noindent  By~(\ref{jac_Nd}) we get
\begin{equation}
\label{deltaRes1}
\Delta((\Res_y (f^{(k)}, f-T))=\sum_{i=1}^s n_1\cdots n_{i-1}t_k(B_i)\left\{\Teissr{q(f,B_i)}{1}{8}{4}\right\},
\end{equation}

\noindent where  $B_i$ is any ball of $T(f)$ of height ${\bf h}_i$ and
\[
t_k(B_i)=\left\{
\begin{array}{ll}
(n_i-1)k &\hbox{ for $1\leq k\leq e_{i}$},\\
e_{i-1}-k     & \hbox{ for $e_{i}\leq k \leq e_{i-1}$},\\
0          & \hbox{ for $e_{i-1} \leq k < n  $}.
\end{array}
\right .
\]

\noindent Let $i_{k}\in \{1,\ldots, s\}$ be  such that $e_{i_k}\leq k< e_{i_k-1}$. Then
$t_k(B_i)$ is positive if and only if $1\leq i \leq i_{k}$.

\noindent The Newton polytope of (\ref{deltaRes1}) is polygonal (see Corollary \ref{coro:polygonal}) and has $i_{k}$ edges of different inclinations. After Theorem \ref{Th:decomp}
we decompose 
$\Res_y(f^{(k)}, f-T)=\prod_{i=1}^{i_{k}} R_i$, 
where  $\deg_T R_i=(n_1\cdots n_{i-1})t_k(B_i)$  and any $R_i$  
has an elementary Newton polytope of  inclination $q(f,B_i)$.

\medskip

\noindent Such a decomposition of the resultant can be also obtained from Eggers factorization of $f^{(k)}$. By Lemma \ref{power} the $B_{i}$-characteristic polynomial of $f$ has the form 
\begin{equation}
\label {eq:char-irred}
F_{B_{i}}(z)=\hbox{\rm constant}(z^{n_i}-c_{B_{i}})^{e_i},
\end{equation}
 
\noindent for some  $c_{B_i}\in \bK\setminus \{0\}$. The properties of such polynomials are described  in the following lemma, which was proved in  \cite[Lemma~5.3]{Forum} for complex polynomials but by Lefschetz Principle  it holds true for polynomials over any algebraically closed field of characteristic zero. 

\begin{Lemma}
\label{AL}
Let $\bK$ be an algebraically closed field of charactersitic zero. 
If $F(z)=(z^n-c)^{e}\in \bK[z]$ with $c\neq 0$ then for $1\leq k<\deg F(z)$ one has 
$\frac{d^k}{dz^k}F(z)=C z^{a}(z^n-c)^{b}\prod_{i=1}^{d}(z^n-c_i)$, where $C\neq 0$ and
\begin{enumerate}
\item[(1)] $0\leq a <n$ and $a+k\equiv 0 \pmod n$,
\item[(2)] $b=\max\{e-k,0\}$,
\item[(3)] $d=\min\{e,k\}-\lceil \frac{k}{n} \rceil$,  where $\lceil x \rceil$ denotes the smallest integer  bigger than or equal to $x$,
\item [(4)] $c_i\neq c_j$ for $1\leq i<j\leq d$ and $0\neq c_i\neq c$ for $1\leq i\leq d$.
\end{enumerate}
\end{Lemma}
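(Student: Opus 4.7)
The plan is to derive (1)--(3) from the explicit binomial expansion of $F^{(k)}(z)$, and to treat (4) by induction on $k$, with the delicate simplicity claim ultimately reduced to \cite[Lemma~5.3]{Forum} and transferred from $\bC$ to an arbitrary algebraically closed $\bK$ of characteristic zero by the Lefschetz principle, as the statement indicates.

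First I would expand
\[
F(z) = \sum_{j=0}^{e}\binom{e}{j}(-c)^{e-j}\,z^{nj}
\]
and differentiate $k$ times. Setting $j_0 := \lceil k/n\rceil$ and $a := nj_0 - k$, only the terms with $j\ge j_0$ survive, which yields
\[
F^{(k)}(z) = z^{a}\,P(z^n), \qquad P(w) = \sum_{m=0}^{e-j_0}\binom{e}{m+j_0}(-c)^{e-m-j_0}\frac{(n(m+j_0))!}{(nm+a)!}\,w^{m},
\]
with $0\le a<n$ and $a+k\equiv 0\pmod n$; this is (1). Next, each of the $n$ distinct $n$-th roots of $c$ is a root of $F$ of multiplicity exactly $e$, hence of $F^{(k)}$ of multiplicity exactly $\max\{e-k,0\}$; translating through $w=z^n$ shows that $(w-c)$ divides $P$ with exact multiplicity $b = \max\{e-k,0\}$, which proves (2) together with the $c_i\ne c$ part of (4). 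The constant term of $P$ equals $\binom{e}{j_0}(-c)^{e-j_0}(nj_0)!/a!$, nonzero because $c\ne 0$ and $j_0\le e$ (from $k<ne$), so $P(0)\ne 0$ and hence $c_i\ne 0$. Finally, the degree identity $ne - k = \deg F^{(k)} = a + nb + nd$ forces $d = \min\{e,k\}-\lceil k/n\rceil$, which is (3).

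The main obstacle is the pairwise distinctness of $c_1,\dots,c_d$ in (4). My plan is to induct on $k$ via the relation
\[
F^{(k+1)}(z) = z^{a-1}\,Q(z^n), \qquad Q(w) = a\,P(w) + nw\,P'(w),
\]
(with the obvious modification $Q = nP'$ when $a=0$, in which case the exponent of $z$ jumps from $0$ to $n-1$). Writing $P_1 := P/(w-c)^b$, which by the inductive hypothesis has simple roots distinct from $0$ and $c$, a direct computation gives $Q = (w-c)^{b-1} S(w)$ with $S(c) = nbc\,P_1(c)\ne 0$ and $S(0) = -ac\,P_1(0)\ne 0$, propagating the $(w-c)$-adic valuation $b' = b-1$ and the nonvanishing at $0$ to step $k+1$. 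The remaining ingredient --- that the non-$(w-c)$ roots of $Q$ are simple --- amounts to showing $\gcd(S, S')=1$; I would either carry out the explicit gcd computation using the inductive fact that $P_1$ is square-free, or, following the statement, appeal to \cite[Lemma~5.3]{Forum} and transport the conclusion to arbitrary algebraically closed $\bK$ of characteristic zero by the Lefschetz principle.
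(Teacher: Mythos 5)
The paper gives no internal proof of this lemma at all: it simply cites \cite[Lemma 5.3]{Forum} for the complex case and invokes the Lefschetz principle, which is exactly your stated fallback. In that reading your proposal matches the paper and is fine. Your additional direct arguments are correct and go beyond what the paper records: the binomial expansion cleanly gives $F^{(k)}(z)=z^aP(z^n)$ with $0\le a<n$, $a+k\equiv 0\pmod n$ (item (1)); $P(0)=\binom{e}{j_0}(-c)^{e-j_0}(nj_0)!/a!\neq 0$ gives $c_i\neq 0$; the multiplicity-drop argument gives the exact power $b=e-k$ of $(w-c)$ when $k\le e$; and the degree count then yields (3) once the factored form is known.

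The gap, if you intend the induction to be self-contained, is precisely at the step you flag: showing that $S=Q/(w-c)^{b-1}$ is square-free. This does \emph{not} follow from a formal $\gcd(S,S')$ computation using only the inductive hypothesis that $P_1$ is square-free --- for a generic square-free $P_1$ the polynomial $aP+nwP'$ can perfectly well acquire multiple roots, so the induction as stated cannot close without using the specific structure of $F=(z^n-c)^e$ (in \cite{Forum} this is extracted from the differential relation $(z^n-c)F'=nez^{n-1}F$ and the resulting three-term recursion among consecutive derivatives, not from square-freeness alone). A second, smaller gap of the same nature: for $k>e$ the claim that $(w-c)$ divides $P$ with exact multiplicity $0$, i.e.\ $P(c)\neq 0$, is not a consequence of the multiplicity-drop principle (which says nothing about derivatives of order beyond the multiplicity); it amounts to the nonvanishing of $\sum_{j\ge j_0}\binom{e}{j}(-1)^{e-j}(nj)!/(nj-k)!$ and is again part of the hard content of \cite[Lemma 5.3]{Forum}. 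So either carry out the differential-equation argument explicitly or rely on the citation for all of (4) (and for (2) when $k>e$), not just for the pairwise distinctness.
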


\begin{Corollary}
\label{irr-kregular}
Every  irreducible quasi-ordinary Weierstrass polynomial is Kuo-Lu 
$k$-regular for any positive  integer $k$. 
\end{Corollary}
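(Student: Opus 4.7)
The plan is to combine the explicit form of the $B$-characteristic polynomial of an irreducible quasi-ordinary $f$ (given by Lemma~\ref{power}, or equivalently~\eqref{eq:char-irred}) with the detailed description of the $k$-th derivative furnished by Lemma~\ref{AL}, and then read off $k$-regularity directly from the definition.

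First, fix an irreducible quasi-ordinary Weierstrass polynomial $f$ and let $B$ be any bar of $T(f)$ of finite height. By Lemma~\ref{power}(2), since $\Zer f \cap B \neq \emptyset$, the $B$-characteristic polynomial must have the form $F_B(z) = a(z^{n(B)} - c)^{e}$ for some nonzero $a,c \in \bK$ and some positive integer $e$ (the case $F_B(z) = az^k$ is ruled out because all roots of $F_B$ correspond to leading coefficients of roots of $f$ in $B$, which are nonzero). Equivalently, this is~\eqref{eq:char-irred}.

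Next, fix a positive integer $k$. If $k \geq \deg F_B$, then $F_B^{(k)} \equiv 0$ and $F_B$ is $k$-regular by the first clause of the definition. Otherwise $1 \leq k < \deg F_B$, and Lemma~\ref{AL} applied to $F_B$ gives
\[
F_B^{(k)}(z) = C z^{a}(z^{n(B)}-c)^{b}\prod_{i=1}^{d}(z^{n(B)}-c_i),
\]
with $C \neq 0$, $b = \max\{e-k,0\}$, and $c_i \neq c$, $c_i \neq 0$ for all $i$. The roots of $F_B$ are the $n(B)$-th roots of $c$, each of multiplicity $e$, and they are all nonzero because $c \neq 0$.

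Now I check the second clause of $k$-regularity. A root $\zeta$ of $F_B$ of multiplicity $\leq k$ can exist only when $e \leq k$, in which case $b = 0$. For such a $\zeta$ one has $\zeta^{n(B)} = c$, so
\[
F_B^{(k)}(\zeta) = C\,\zeta^{a}\prod_{i=1}^{d}(c - c_i),
\]
which is nonzero since $\zeta \neq 0$ and each $c_i \neq c$. Hence no root of $F_B$ of multiplicity $\leq k$ is a root of $F_B^{(k)}$, so $F_B$ is $k$-regular. Since $B$ was an arbitrary bar of $T(f)$ of finite height, $f$ is Kuo-Lu $k$-regular. There is no real obstacle here: the entire proof is bookkeeping, with Lemma~\ref{AL} doing the only nontrivial work (the pairwise-distinctness and nonvanishing of the parameters $c,c_1,\dots,c_d$).
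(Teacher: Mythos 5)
Your proof is correct and follows the same route the paper intends: combining the form $F_B(z)=a(z^{n(B)}-c)^{e}$ with $c\neq 0$ from Lemma~\ref{power} (equation~\eqref{eq:char-irred}) with the explicit description of its $k$-th derivative in Lemma~\ref{AL}. The paper leaves the corollary as an immediate consequence of those two facts; you have simply written out the bookkeeping, including the correct treatment of the cases $k\geq\deg F_B$ and $e\leq k$.
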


\begin{Theorem}\label{Merle}
Let $f(y)\in \bK[[\underline{x}]][y]$  be an irreducible quasi-ordinary Weierstrass polynomial 
of degree $n>1$ and characteristic exponents ${\bf h}_1, \ldots, {\bf h}_s$. Then
\begin{equation}
\label{dM}
f^{(k)}(y)= \prod_{i=1}^{i_{k}}p_i,
\end{equation}
where  
\begin{enumerate}
\item $p_i$ is a Weierstrass polynomial in $\bK[[\underline{x}]][y]$ of degree $n_1\cdots n_{i-1}t_k(B_i)$.
\item Any irreducible factor $g$ of $p_i$ verifies
\[
\cont_{\rm P}(g,f)=\mbox{self-contact}(B_i).
\]
\item The $B_i$-characteristic polynomial of $p_i$ is 
$(P_i)_{B_i}=const F_{B_i}^{\ominus}.$ 
\end{enumerate}
\end{Theorem}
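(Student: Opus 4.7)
The plan is to derive Theorem~\ref{Merle} as a direct specialization of Theorem~\ref{dec-red-qo} to the irreducible case, exploiting the constrained shape of the Kuo-Lu tree of an irreducible quasi-ordinary polynomial together with Corollary~\ref{irr-kregular}.

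First I would unwind the Eggers tree of an irreducible $f$. By the structural statement recalled just before the theorem, all bars of $T(f)$ of height ${\bf h}_i$ are mutually conjugate, so $E(f)$ is the linear chain $[B_0]<[B_1]<\cdots<[B_s]$, and $N(B_i)=n_1\cdots n_{i-1}$. Moreover, Corollary~\ref{irr-kregular} asserts that $f$ is Kuo-Lu $k$-regular for every $k$, so we may apply Theorem~\ref{dec-red-qo} together with the equality case in its item~3.

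Next I apply Theorem~\ref{dec-red-qo} to write $f^{(k)}=\prod_{[B]\in E(f)}p_{[B]}$ and set $p_i:=p_{[B_i]}$. From the explicit formula for $t_k(B_i)$ displayed just before the theorem, $t_k(B_i)>0$ precisely when $1\leq i\leq i_k$; for $i>i_k$ the degree $N(B_i)t_k(B_i)=0$ forces $p_{[B_i]}=1$, and the product collapses to (\ref{dM}). Part~1 of Theorem~\ref{dec-red-qo} immediately yields both the degree formula
\[
\deg p_i = N(B_i)t_k(B_i) = n_1\cdots n_{i-1}\,t_k(B_i),
\]
proving statement~1, and the identification $(P_i)_{B_i}=\mbox{const}\cdot F_{B_i}^{\ominus}$, proving statement~3.

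For statement~2, I use that the irreducible quasi-ordinary $f$ is compatible with every $B_i$ of finite height and satisfies $\Zer f\cap B_i\neq\emptyset$; by Proposition~\ref{self-contact} this gives $\cont_{\rm P}(f,B_i)=\mbox{self-contact}(B_i)$. Applying item~2(c) of Theorem~\ref{dec-red-qo} with $f$ playing the role of $f_i$, and using the equality case supplied by Kuo-Lu $k$-regularity (item~3 of that theorem), we conclude
\[
\cont_{\rm P}(f,g)=\cont_{\rm P}(f,B_i)=\mbox{self-contact}(B_i)
\]
for every irreducible factor $g$ of $p_i$; symmetry of the resultant identifies this with $\cont_{\rm P}(g,f)$. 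The main obstacle is purely organisational: one must verify that for the unique irreducible component $f$ of itself we really land in the equality case of Theorem~\ref{dec-red-qo}, which is precisely what Corollary~\ref{irr-kregular} guarantees, so no new argument is required beyond the combinatorial identification of $[B_i]$ and the counting $N(B_i)=n_1\cdots n_{i-1}$.
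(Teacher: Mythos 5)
Your proposal is correct and follows essentially the same route as the paper, whose proof is precisely the one-line observation that the theorem follows from Corollary~\ref{irr-kregular} together with parts 1, 2 and 3 of Theorem~\ref{dec-red-qo}; you simply spell out the specialization (linear Eggers tree, $N(B_i)=n_1\cdots n_{i-1}$, vanishing of $t_k(B_i)$ for $i>i_k$, and the equality case of item 2(c) via $k$-regularity) in more detail.
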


\noindent \begin{proof}
\noindent The theorem follows from Corollary \ref{irr-kregular} and the first, second and third part of  Theorem~\ref{dec-red-qo}.
\end{proof}

\medskip

\begin{Proposition}
\label{ppppp}
Let $f(y)\in \bK[[\underline{x}]][y]$  be an irreducible quasi-ordinary Weierstrass polynomial with characteristic exponents ${\bf h}_1, \ldots, {\bf h}_s$. 
Let $a$, $d$ be integers such that $0\leq a <n_i$, $a+k\equiv 0 \pmod {n_i}$ and 
$d=\min\{e_i,k\}-\lceil \frac{k}{n_i} \rceil$.
Then every $p_i$ of (\ref{dM}) admits a factorization of the form  
$p_i=p_{i0}p_{i1}\cdots p_{id},$ where

\begin{enumerate}

\item 
the corresponding $B_i$-characteristic polynomials are
$P_{i0}(z)=\mbox{const}\cdot z^a$, 
$P_{ij}(z)=\mbox{const}\cdot (z^{n_i}-c_j)$ 
with $c_j\neq c_l$ for $1\leq j<l\leq d$ and $c_j\neq0$.

\item $p_{i0}$  is a Weierstrass polynomial of degree $a\cdot n_1\cdots n_{i-1}$ not necessarily 
quasi-ordinary. 
\item  Every $p_{ij}$ for $1\leq j \leq d$ is a quasi-ordinary irreducible Weierstrass polynomial  
of degree $n_1\cdots n_i$ and characteristic exponents ${\bf h}_1, \ldots, {\bf h}_i$.
\end{enumerate}
\end{Proposition}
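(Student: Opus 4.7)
The plan is to combine the explicit form of $F_{B_i}(z)$ from equation~\eqref{eq:char-irred} with Lemma~\ref{AL} to factor $(P_i)_{B_i}(z)$, and then to translate this into a factorization of $p_i$ via Theorem~\ref{pack} together with the multiplicativity of $B_i$-characteristic polynomials over Weierstrass factorizations. From~\eqref{eq:char-irred}, $F_{B_i}(z)=\mbox{const}\cdot(z^{n_i}-c_{B_i})^{e_i}$, so Lemma~\ref{AL} gives
\[
F_{B_i}^{(k)}(z)=Cz^{a}(z^{n_i}-c_{B_i})^{b}\prod_{j=1}^{d}(z^{n_i}-c_j),\qquad b=\max\{e_i-k,0\},
\]
with the $c_j$'s pairwise distinct, non-zero, and distinct from $c_{B_i}$. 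Because $B_i$ has exactly $n_i$ (pairwise conjugate) postbars in $T(f)$, each containing $e_i$ roots of $f$, Remark~\ref{r: +-} identifies ${\cal F}^{\oplus}_{B_i}(z)=(z^{n_i}-c_{B_i})^{b}$, and Theorem~\ref{Merle}~(3) yields
\[
(P_i)_{B_i}(z)=\mbox{const}\cdot z^{a}\prod_{j=1}^{d}(z^{n_i}-c_j).
\]

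Next, I factor $p_i$ into irreducible Weierstrass polynomials in $\bK[[\underline{x}]][y]$. By Corollary~\ref{CCKiel} each factor is compatible with $B_i$, and since Newton polytopes add under products, the $B_i$-characteristic polynomials of these factors multiply to $(P_i)_{B_i}(z)$. By Theorem~\ref{pack}~(2), each irreducible characteristic polynomial has the form $az^{l}$ or $a(z^{n_i}-c)^{l}$ with $c\neq 0$. Since each $(z^{n_i}-c_j)$ is coprime to every other factor of $(P_i)_{B_i}$ and occurs with multiplicity one (Lemma~\ref{AL}~(4)), it must coincide, up to a constant, with the full characteristic polynomial of a single irreducible factor $p_{ij}$ of $p_i$ (so $l=1$), and by the last sentence of Theorem~\ref{pack}~(2) this $p_{ij}$ is quasi-ordinary. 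I then define $p_{i0}$ as the product of the remaining irreducible factors (those with characteristic polynomials of the form $z^{l_s}$); their product has characteristic polynomial $\mbox{const}\cdot z^{a}$. This proves part~(1), identifies the $B_i$-characteristic polynomials as claimed, and yields the quasi-ordinariness half of part~(3).

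To finish parts~(2) and~(3), fix $j\geq 1$. By the cyclic extension argument in the proof of Lemma~\ref{power}, there exists $\varphi\in Gal(\bM/\bL)$ fixing $\lambda_{B_i}$ with $\varphi(\underline{x}^{{\bf h}_i})=\omega\underline{x}^{{\bf h}_i}$ for $\omega$ a primitive $n_i$-th root of unity; applying $\varphi^{0},\varphi,\dots,\varphi^{n_i-1}$ to any root $\alpha\in \Zer p_{ij}\cap B_i$ produces $n_i$ distinct roots of $p_{ij}$ in $B_i$ (they differ by powers of $\omega$ in the leading $\underline{x}^{{\bf h}_i}$-coefficient) with pairwise contact ${\bf h}_i$, placing $B_i$ in $\tilde T(p_{ij})$. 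Lemma~\ref{subst}~(2) applied to $p_{ij}$ then gives $|\Zer p_{ij}\cap B_i|=\deg (P_{ij})_{B_i}=n_i$; since $p_{ij}$ is $\bL$-irreducible, the Galois group permutes the $N(B_i)=n_1\cdots n_{i-1}$ conjugates of $B_i$, whence $\deg p_{ij}=N(B_i)\cdot n_i=n_1\cdots n_i$. The pairwise contacts among $\Zer p_{ij}$ are either ${\bf h}_i$ (same conjugate of $B_i$, distinct leading coefficients) or $O(\lambda_{B'},\lambda_{B''})\in\{{\bf h}_1,\dots,{\bf h}_{i-1}\}$ (distinct conjugates $B', B''$), and every such height is realized by the tree structure of $[B_i]$ in $T(f)$; hence the characteristic exponents of $p_{ij}$ are exactly ${\bf h}_1,\dots,{\bf h}_i$. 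Finally,
\[
\deg p_{i0}=\deg p_i-d\cdot n_1\cdots n_i=n_1\cdots n_{i-1}(t_k(B_i)-dn_i)=a\cdot n_1\cdots n_{i-1},
\]
using Theorem~\ref{Merle}~(1) and the identity $a+dn_i=t_k(B_i)$, which follows by a short case analysis from $a+k\equiv 0\pmod{n_i}$. The main obstacle is the matching step of the second paragraph: translating the multiplicity-one refinement of $(P_i)_{B_i}(z)$ into an actual factorization of $p_i$, which requires combining the restricted form of irreducible characteristic polynomials in Theorem~\ref{pack}~(2) with the coprimality and multiplicity-one information in Lemma~\ref{AL}~(4).
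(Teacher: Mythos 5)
Your proposal is correct and follows essentially the same route as the paper: equation \eqref{eq:char-irred} plus Lemma \ref{AL} to pin down $(P_i)_{B_i}(z)=\mbox{const}\cdot z^a\prod_{j=1}^d(z^{n_i}-c_j)$, Theorem \ref{pack}(2) to force each simple factor $z^{n_i}-c_j$ to be the full characteristic polynomial of a single quasi-ordinary irreducible factor $p_{ij}$, a count of roots in the $N(B_i)$ conjugates of $B_i$ for the degrees, and the same two-case contact analysis for the characteristic exponents. Your extra details (the explicit identification of ${\cal F}^{\oplus}_{B_i}$, the Galois-orbit count of roots in $B_i$, and obtaining $\deg p_{i0}$ by subtraction via $a+dn_i=t_k(B_i)$) are just fuller versions of steps the paper leaves as references to earlier proofs.
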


\noindent \begin{proof}
\noindent  After \eqref {eq:char-irred} $F_{B_i}(z)$ has the form $a(z^{n_i}-c)^{e_i}$ for some nonzero 
$a$ and $c$.

\noindent By the first part of Theorem \ref{dec-red-qo} and Lemma \ref{AL} the polynomial $P_{i,B_i}(z)=\hbox{\rm const}\cdot z^{a}\prod_{j=1}^{d}(z^{n_{i}}-c_j)$. This polynomial is the product of the $B_i$-characteristic polynomials of the irreducible factors of $p_{i}$.  
From the second part of Theorem~\ref{pack}, we know that $p_{i}$ has $d$ irreducible factors $\{p_{ij}\}_{j=1}^{d}$ such that $P_{ij}(z)=\mbox{const}\cdot (z^{n_i}-c_j)$. If $p_{i}$ has other irreducible factors, then $p_{i0}$ is their product.
It also follows from  Theorem~\ref{pack} that $p_{ij}$ are quasi-ordinary for $1\leq j\leq d$. \\

\noindent By a similar argument as in the first part of the proof of Theorem \ref{dec-red-qo} we get $\deg p_{ij}=N(B_{i})\deg P_{i,jB_i}(z)$. Since $N(B_{i})=n_{1}\cdots n_{i-1}$, we obtain the statements about the degrees of $p_{ij}$.\\

\noindent Fix $p_{ij}$ for $j\in\{1,\ldots,d\}$. The pseudo-ball $B_{i}$ has $n_{1}\cdots n_{i-1}$ conjugate pseudo-balls. Each of these pseudo-balls contains $n_{i}$ roots of $p_{ij}$. Since the roots of $P_{i,j}(z)$ are simple, 
any two roots of $p_{ij}$ belonging to the same pseudo-ball have different leading coefficients with respect to $B_i$, so their contact equals ${\bf h}_{i}$. Now, if we consider two roots of $p_{ij}$ belonging to different conjugate pseudo-balls,  then their contact depends only on these two pseudo-balls, hence it is equal to ${\bf h}_{l}$ for some $l\in\{1,\ldots, i-1\}$. We conclude that  the characteristic exponents of $p_{ij}$ are ${\bf h}_1, \ldots, {\bf h}_i$.
\end{proof}

\medskip

\noindent In Proposition \ref{ppppp} the integer $a$ can be $0$,  in such a case $p_{i0}=1$. If $a=1$ then $p_{i0}$ is quasi-ordinary with characteristic exponents ${\bf h}_1, \ldots, {\bf h}_{i-1}$. Moreover $d$ can be zero and in such a case $p_i=p_{i0}$.

\section{Eggers decomposition for power series}
\label{section-Eggers series}
\noindent 
In this section we deal with power series in variables $\underline{x}$ and $y$. 
A power series will be called {\it quasi-ordinary} if it is a product of a unity and a quasi-ordinary Weierstrass polynomial. 
We outline how to generalize the results of previous sections to quasi-ordinary power series. 
For that  we need the next generalization of Lemma~\ref{derivatives}:

\begin{Lemma}\label{derivatives-series}
Let $f=u f^*$ and 
$\frac{\partial ^k}{\partial y^k}f=w g^*,$ where 
$u$,  $w\in\bK[[\underline{x},y]]$ are unities,  
$f^*$,  $g^*\in \bK[[\underline{x}]][y]$ are Weierstrass polynomials 
and $1\leq k \leq n=\deg f^*$.
Assume that $f^{*}$ is compatible with a pseudo-ball $B$.
 Then $g^{*}$ is compatible with $B$ and 
 $G^*_B(z)=\frac{(n-k)!}{n!}\frac{d^k}{dz^k}F_B^*(z)$.
\end{Lemma}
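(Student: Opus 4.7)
The plan is to mimic the proof of Lemma~\ref{derivatives} with careful bookkeeping for the extra unit factor $u$. First I apply the Leibniz rule to $\partial_y^k(u f^*)$:
\[
\frac{\partial^k f}{\partial y^k}=\sum_{j=0}^{k}\binom{k}{j}\frac{\partial^j u}{\partial y^j}\cdot\frac{\partial^{k-j} f^*}{\partial y^{k-j}},
\]
and evaluate at $y=\lambda_B(\underline{x})+z\underline{x}^{h(B)}$. For each summand, Lemma~\ref{derivatives} applied to the Weierstrass polynomial $f^*$ yields
\[
\frac{\partial^{k-j} f^*}{\partial y^{k-j}}\bigl(\lambda_B+z\underline{x}^{h(B)}\bigr)=\frac{d^{k-j}}{dz^{k-j}}F^*_B(z)\cdot\underline{x}^{q(f^*,B)-(k-j)h(B)}+\cdots,
\]
while $\frac{\partial^j u}{\partial y^j}(\underline{x},\lambda_B+z\underline{x}^{h(B)})$ is a formal series in $\underline{x}^{1/\bN}$ and $z$ of nonnegative $\underline{x}$-order, with the $j=0$ factor contributing $u(0,0)+\cdots$.

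Second, since $h(B)\in \bQ^{d}_{\geq 0}\setminus\{0\}$, for every $j\geq 1$ the exponent $q(f^*,B)-(k-j)h(B)$ is strictly larger (in the coordinatewise order) than $q(f^*,B)-k\,h(B)$. Hence the $j=0$ term is the unique dominant contribution:
\[
\frac{\partial^k f}{\partial y^k}\bigl(\lambda_B+z\underline{x}^{h(B)}\bigr)=u(0,0)\cdot\frac{d^k}{dz^k}F^*_B(z)\cdot\underline{x}^{q(f^*,B)-k\,h(B)}+\cdots.
\]
Since $\partial_y^k f=w g^*$ with $w$ a unit, $w(\underline{x},\lambda_B+z\underline{x}^{h(B)})$ is invertible in the relevant ring of fractional series and starts with $w(0,0)\neq 0$; dividing gives the compatibility of $g^*$ with $B$, together with $q(g^*,B)=q(f^*,B)-k\,h(B)$ and $G^*_B(z)=\frac{u(0,0)}{w(0,0)}\cdot\frac{d^k}{dz^k}F^*_B(z)$.

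Third, to identify the universal constant $\frac{(n-k)!}{n!}$, I specialize to $\underline{x}=0$. The Weierstrass condition gives $f^*(0,y)=y^n$ and $g^*(0,y)=y^{\deg g^*}$, and the same Leibniz expansion yields
\[
\frac{\partial^k}{\partial y^k}\bigl(u(0,y)\,y^n\bigr)=\frac{n!}{(n-k)!}u(0,0)\,y^{n-k}+O(y^{n-k+1})=w(0,y)\,y^{\deg g^*}.
\]
Comparing the lowest powers of $y$ forces $\deg g^*=n-k$ and $w(0,0)=\frac{n!}{(n-k)!}u(0,0)$, so $\frac{u(0,0)}{w(0,0)}=\frac{(n-k)!}{n!}$ and the claimed normalization follows.

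The main obstacle is purely the bookkeeping in the Leibniz sum: verifying that for $j\geq 1$ the contribution really is of strictly greater $\underline{x}$-order than the $j=0$ contribution. This reduces to the elementary observation $j\,h(B)\in\bQ^{d}_{\geq 0}\setminus\{0\}$, which holds because $B$ has finite (nonzero) height; no difficulty beyond the polynomial case of Lemma~\ref{derivatives} arises.
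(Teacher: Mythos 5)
Your proof is correct, and it reaches the same two key identities as the paper --- the leading-term formula $\partial_y^kf(\lambda_B+z\underline{x}^{h(B)})=u(0,0)\frac{d^k}{dz^k}F_B^*(z)\,\underline{x}^{q(f^*,B)-kh(B)}+\cdots$ and the normalization $w(0,0)=\frac{n!}{(n-k)!}u(0,0)$ obtained by setting $\underline{x}=0$ --- but by a different route for the first one. You expand $\partial_y^k(uf^*)$ by the Leibniz rule and check that every term in which $j\geq 1$ derivatives fall on the unit $u$ contributes only exponents $\geq q(f^*,B)-(k-j)h(B)$, hence lies in the higher-order tail because $jh(B)\in\bQ_{\geq0}^d\setminus\{0\}$. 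The paper avoids the Leibniz rule altogether: it sets $f_1(\underline{x},z)=\underline{x}^{-q(f^*,B)}f(\underline{x},\lambda_B(\underline{x})+z\underline{x}^{h(B)})$, observes $f_1(0,z)=u(0,0)F_B^*(z)$, and uses the chain rule $\frac{\partial^k f_1}{\partial z^k}=\frac{\partial^k f}{\partial y^k}(\cdot)\cdot\underline{x}^{kh(B)-q(f^*,B)}$, so the unit is never differentiated and its entire contribution collapses to the single value $u(0,0)$. The chain-rule version is shorter and only ever touches the $k$th derivative; your version is more explicit about where the error terms live but needs an order estimate for each intermediate derivative $\frac{\partial^{k-j}}{\partial y^{k-j}}f^*$ --- note that for those $j$ with $k-j>\deg F_B^*$ Lemma~\ref{derivatives} does not literally apply (the leading coefficient vanishes), so you should instead quote the weaker but sufficient fact, read off by differentiating the expansion in Remark~\ref{tt2}, that all exponents of $\frac{\partial^{k-j}f^*}{\partial y^{k-j}}(\lambda_B+z\underline{x}^{h(B)})$ are $\geq q(f^*,B)-(k-j)h(B)$. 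Both arguments implicitly use $h(B)\neq{\bf 0}$ (automatic for quasi-ordinary pseudo-balls, whose heights are contacts of roots vanishing at the origin), a point you at least make explicit.
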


\noindent \begin{proof} 
Substituting $\underline{x}=0$ we get 
$f(0,y)=u(0,0)y^n+\cdots$. 
Hence
$\frac{\partial ^k f}{\partial y^k}(0,y)=\frac{n!}{(n-k)!}u(0,0)y^{n-k}+\cdots$. 
On the other hand $\frac{\partial ^k f}{\partial y^k}(0,y)=w(0,y)g^{*}(0,y)$ which implies that
\begin{equation}
\label{eqw}
w(0,0)=\frac{n!}{(n-k)!}u(0,0).
\end{equation}

\medskip

\noindent By the assumption of compatibility of $f^{*}$ we have
\[
f^*(\underline{x},\lambda_B(\underline{x}) 
+ z\underline{x}^{h(B)})=F^*_B(z)\underline{x}^{q(f^*,B)}+\cdots .
\]
Hence 
$f_1(\underline{x},z):=\underline{x}^{-q(f^*,B)}f(\underline{x},\lambda_B(\underline{x}) + z\underline{x}^{h(B)})$ is a fractional 
power series such that 
\begin{equation}\label{eq:11.1}
f_1(0,z)=u(0,0)F^*_B(z) .
\end{equation}
By the chain rule of differentiation 
\begin{equation}\label{eq:11.2}
\frac{\partial ^k f_1}{\partial z^k}(\underline{x},z)=
    \frac{\partial ^k f}{\partial y^k} (\underline{x},\lambda_B(\underline{x})+z\underline{x}^{h(B)}) 
    \cdot\underline{x}^{k h(B)-q(f^*,B)}.
\end{equation}
Differentiating (\ref{eq:11.1}) yields $\frac{\partial ^k f_1}{\partial z^k}(0,z)=u(0,0)\frac{d^k}{dz^k}F_B^*(z)$. Thus
\begin{equation}\label{eq:11.3}
\frac{\partial ^k f_1}{\partial z^k}(\underline{x},z)=u(0,0)\frac{d^k}{dz^k}F_B^*(z)+\hbox{\rm terms of positive degree in $\underline{x}$}.
\end{equation}
Comparing (\ref{eq:11.2}) and (\ref{eq:11.3}) we get
\[
\frac{\partial ^k f}{\partial y^k} (\underline{x},\lambda_B(\underline{x})+z\underline{x}^{h(B)}) =
u(0,0)\frac{d^k}{dz^k}F_B^*(z)\cdot\underline{x}^{q(f^*,B)-k h(B)} + \cdots
\]
By the definition of $g^{*}$, the left hand side of the above equality can be written as
\[
w(0,0)\,g^*(\underline{x},\lambda_B(\underline{x})+z\underline{x}^{h(B)})+\cdots, 
\]
which gives, after (\ref{eqw})
\[
\frac{n!}{(n-k)!}u(0,0) g^*(\underline{x},\lambda_B(\underline{x})+z\underline{x}^{h(B)}) =
u(0,0)\frac{d^k}{dz^k}F_B^*(z)\cdot\underline{x}^{q(f^*,B)-k h(B)} + \cdots
\]

\noindent and finishes the proof. 
\end{proof}

\medskip
\noindent 
Theorem~\ref{higher-Kuo-Lu},
Corollary~\ref{Kuo-Lu},
Theorem~\ref{higher-discriminants},
Theorem~\ref{pack},
Corollary~\ref{pack1},
Theorem~\ref{dec-red-qo},
Theorem~ \ref{Merle} and Proposition \ref{ppppp}, where $f^{(k)}$ stands for the Weierstrass polynomial 
of $k$th derivative, remain true for quasi-ordinary power series. 
For the proofs it is enough to replace the power series by their Weierstrass polynomials and 
use Lemma~\ref{derivatives-series} instead of Lemma~\ref{derivatives} when required.

\medskip
\noindent
{\small Evelia Rosa Garc\'{\i}a Barroso\\
Departamento de Matem\'aticas, Estad\'{\i}stica e I.O.\\
Secci\'on de Matem\'aticas, Universidad de La Laguna\\
Apartado de Correos 456\\
38200 La Laguna, Tenerife, Espa\~na\\
e-mail: ergarcia@ull.es}

\medskip

\noindent {\small   Janusz Gwo\'zdziewicz\\
Institute of Mathematics\\
Pedagogical University of Krak\'ow\\
Podchor\c a{\accent95 z}ych 2\\
PL-30-084 Cracow, Poland\\
e-mail: janusz.gwozdziewicz@up.krakow.pl}

\end{document}